\newtheorem{theorem}{Theorem}[section]
\newtheorem{corollary}[theorem]{Corollary}
\newtheorem{lemma}[theorem]{Lemma}
\newtheorem{proposition}[theorem]{Proposition}
\newtheorem{remark}[theorem]{Remark}
\newtheorem*{remark*}{Remark}
\newtheorem{assumption}[theorem]{Assumption}
\newtheorem{definition}[theorem]{Definition}
\numberwithin{equation}{section}
\def\qp{\sharp}
\def\dmp{\flat}
\def\eps{\varepsilon}
\def\D{{\rm d}}
\def\bigO{\mathcal{O}}
\def\id{\operatorname{id}}
\def\End{\operatorname{End}}
\def\diag{\operatorname{diag}}
\def\Re{\operatorname{Re}}
\def\setR{{\mathbb{R}}}
\def\setC{{\mathbb{C}}}
\def\setN{{\mathbb{N}}}
\def\setZ{{\mathbb{Z}}}
\def\setT{{\mathbb{T}}}
\def\calE{{\mathcal{E}}}
\def\calN{{\mathcal{N}}}
\def\calT{{\mathcal{T}}}
\def\Lexp{{L^\infty_{\rm exp}}}
\newcommand{\rk}[1]{^{[#1]}}
\newcommand\mean[1]{\langle #1 \rangle}
\newcommand\Mean[1]{\big\langle #1 \big\rangle}
\newcommand\MEAN[1]{\left\langle #1 \right\rangle}
\title{\Large\scshape
Multi-frequency averaging and uniform accuracy towards numerical approximations for a Bloch model 
}
\author{Brigitte Bidegaray-Fesquet$^1$ \and Clément Jourdana$^1$ \and Léopold Trémant$^2$\\
\normalsize1. Univ. Grenoble Alpes, CNRS, Grenoble INP\footnote{Institute of Engineering Univ. Grenoble Alpes}, LJK, 38000 Grenoble, France\\
\normalsize 2.  Institut de Recherche Mathématique Avancée, UMR 7501, \\ 
\normalsize Université de Strasbourg et CNRS,  7 rue René Descartes, 67000 Strasbourg, France \\
\normalsize \& INRIA Nancy-Grand Est, TONUS Project, Strasbourg, France}
\date{}   
\begin{document}
\maketitle

\begin{abstract}
We are interested in numerically solving a transitional model derived from the Bloch model. 
The Bloch equation describes the time evolution of the density matrix of a quantum system forced by an electromagnetic wave. 
In a high frequency and low amplitude regime, it asymptotically reduces to a non-stiff rate equation. 
As a middle ground, the transitional model governs the diagonal part of the density matrix. 
It fits in a general setting of linear problems with a high-frequency quasi-periodic forcing and an exponentially decaying forcing. 
The numerical resolution of such problems is challenging. 
Adapting high-order averaging techniques to this setting, we separate the slow (rate) dynamics from the fast (oscillatory and decay) dynamics to derive a new micro-macro problem. 
We derive estimates for the size of the micro part of the decomposition, and of its time derivatives, showing that this new problem is non-stiff. 
As such, we may solve this micro-macro problem with uniform accuracy using standard numerical schemes. 
To validate this approach, we present numerical results first on a toy problem and then on the transitional Bloch model. 
\end{abstract}

\noindent {\bf Keywords:} highly-oscillatory problems, multi-frequency averaging, micro-macro decomposition, uniform accuracy, Bloch model, rate equations.

\noindent {\bf AMS Subject Classification:} 34E13, 65L04, 65L05, 81V80\\

% 34E13: Multiple scale methods for ordinary differential equations
% 65L04: Numerical methods for stiff equations
% 65L05: Numerical methods for initial value problems involving ordinary differential equations
% 81V80: quantum optics

\section{Introduction}

The Bloch model describes the time-evolution of the density matrix of a quantum system with a discrete number of energy levels, forced by an electromagnetic wave. 
Different strategies have been proposed in the literature to solve the Bloch equation. 
Let us mention for instance a splitting procedure that solves the different terms separately in an exact way, or a relaxation scheme where the diagonal and the off-diagonal parts of the density matrix are located on a staggered time grid (see \cite{Bidegaray2001} for details on these approaches). 
These schemes have been designed to preserve physical properties of interest or to numerically decouple the equations. 
However, they are not suitable in the case of stiff forcing coefficients.

In \cite{BidegarayFesquet2004a, BidegarayFesquet2004b}, the authors study some high frequency and low amplitude regime, and show that the model asymptotically behaves like a rate equation with averaged transition rates. 
Numerically, the original model is very stiff, meaning that using standard numerical methods requires costly computations. 
The rate equation, however, is a non-stiff autonomous equation which can be solved with standard methods at no additional cost.

In this paper, we are interested in a transitional model from which the rate equation is actually obtained. 
This transitional model, governing the diagonal part of the density matrix, can be seen as a middle-ground between the full original equation and the simpler rate equation. 
It still presents numerical challenges, due to the stiff time-dependence of the transition rates. 
Classical numerical methods may fail to tackle this problem at a reasonable computational cost. 
The present work addresses this issue.\\
 
The main numerical challenge at hand is that of order reduction, a well-known phenomenon documented e.g. in \cite[Sec. IV.15]{Hairer1996} or \cite{Verwer1998} and references therein. 
Here, this is due to the degeneracy of the second (and higher) derivative(s) of the solution in the asymptotic limit. This causes an increase of the error constant in standard estimates, to the point where the theoretical order may no longer be observed.

Formally, if we denote $\eps$ the characteristic time of the problem and $\Delta t$ the time-step of the numerical method, then the error $E_\eps(\Delta t)$ of a standard scheme of order $s$ may be bounded
\begin{equation*}
    E_\eps(\Delta t) \leq C \min \left( \frac{\Delta t^s}{\eps^r}, \Delta t^{s'} \right), 
\end{equation*}
for some positive constant $C$ independent of $\eps$, a uniform order $s' \leq s$ and  a degeneracy order $r \geq 0$. 
When $r > 0$ and $s' < s$, there is a so-called \emph{asymptotic regime} $\Delta t \gg \eps$ in which the behavior of the error does not match the order of the method.  
This is how the \emph{order reduction} phenomenon manifests itself in our context. 
In order to ensure a given error bound, one must use an $\eps$-dependent time-step $\Delta t = \bigO(\eps^{r/s})$, which increases the computational cost. 
To facilitate the discussion surrounding this interaction between the error bound, the characteristic time $\eps$ and the time-step $\Delta t$, we consider different notions of convergence beyond the standard $\Delta t \ll \eps$ paradigm.

If the parameter $\eps$ is small w.r.t. the desired error, then one may consider a non-stiff reduced asymptotic model instead of the full original model. 
In that case, the numerical error is assumed to match the \emph{asymptotic error} $\lim_{\eps \to 0} E_\eps(\Delta t)$, and using standard schemes is possible. 
For the Bloch model, this could mean solving the rate equation instead of the full problem. 
With this approach, the error will plateau for $\Delta t$ sufficiently small.

Some numerical methods are valid in both the standard and asymptotic regimes, at no additional computational cost and with the same order of convergence. 
These are called \emph{asymptotic-preserving} (AP) methods.
The term was coined in \cite{Jin1999} in the context of hyperbolic problems\footnote{%
	The notion of asymptotic preservation is problem-dependent. 
	For example, the implicit Euler method is exact for the asymptotic error when applied to the problem $\dot u = -u / \eps$, but forgoes all accuracy in the limit $\eps \to 0$ for the problem $\dot u = i u / \eps$ with a non-zero initial condition.}, 
and the development of such schemes remains active e.g. in the community of kinetic equations \cite{Crouseilles2016, Albi2020, Anandan2023, Jin2023}. 
While the previous approach of asymptotic models becomes useless when $\eps$ is not small, these methods prove the convergence of the scheme for both the standard error in the regime $\Delta t \ll \eps$ and the asymptotic error.

However, these asymptotic notions do not describe the behavior of the scheme in the \emph{intermediate regime} $\Delta t \sim \eps$, for which the error may be degraded
\footnote{%
	Consider again the implicit Euler scheme applied to $\dot u = -u / \eps$, with initial condition $u(0) = 1$. 
	After a time $\eps$, the solution is $u(\eps) = e^{-1}$, but using a time-step $\Delta t = \eps$ yields the approximation $u_1 = 1/2$.
	Even though the scheme is asymptotic preserving, here the local error $u(\Delta t) - u_1$ is large independently of the time-step, i.e.~the error is severely degraded in that regime.}. 
To encompass every regime, we consider the \emph{uniform error} $\sup_{\eps \in (0,1]} E_\eps(\Delta t)$, defining the \emph{uniform order} of the method. 
A numerical method is said to be \emph{uniformly accurate} (UA) if its computational cost is independent of $\eps$ and if its uniform order matches its standard order. 
Such methods are valid independently of the size of $\eps$ and of the regime (standard, asymptotic or intermediate).\\

Here, we consider a generic linear differential equation, with a time-dependent forcing which can be split in a quasi-periodic part\footnote{%
	A quasi-periodic function is a function generated by multiple non-resonant base frequencies, e.g.~$a_\tau = \cos(\tau) + \cos(\tau \sqrt2)$, with base (angular) frequencies $1$ and $\sqrt2$.} 
and an exponentially decaying part, both with characteristic time $\eps$. 
The aforementioned ``transitional'' model derived from the Bloch equation falls under this category. 
Our strategy consists in using asymptotic expansion techniques to perform a \emph{micro-macro decomposition}, which separates the asymptotic behavior and the error of asymptotic approximation in the macro and micro part respectively. 
This new micro-macro problem is less stiff and can be solved using standard numerical methods, with no order reduction.
This uniform accuracy on the micro-macro problem translates directly to the solution of the original problem.

Our approach uses techniques from high-order averaging, a method to perform asymptotic expansions on highly-oscillatory problems (of characteristic time $\eps$). 
This method views the solution as the composition of an \emph{average dynamics} with a near-identity rapidly-oscillating change of variable. 
This composition is accurate up to an error of size $\bigO(\eps^n)$ with arbitrary order $n$. 
We refer to Lochak-Meunier \cite{Lochak1988} and Sanders-Verhulst-Murdock \cite{Sanders2007} for textbooks on this method. 
Readers might find similarities with the methods of two-scale expansion \cite{Chartier2015a}, WKB expansion \cite{Wentzel1926, Kramers1926, Brillouin1926, Carles2021}, non-linear geometric optics \cite{Crouseilles2017}, or even normal forms \cite{Bambusi2003, Murdock2006}.

Historically, a key tool in performing these expansions was power series in $\eps$. 
They were used in \cite{Perko1969}, the first known result with periodic forcing, and in \cite{Simo1994} when extending the result to a quasi-periodic forcing. 
Even recently in \cite{Chartier2010, Chartier2012}, formal series (specifically, B-series) were used to derive analytical expressions of the mappings constructed by averaging. 
Somewhat recently, however, the authors in \cite{Castella2015} introduced a concise differential algebraic equation for the mappings, called the \emph{homological equation}. 
In the spirit of \cite{Neishtadt1984}, fixed point iterations may be applied on this equation, and the error of approximation can readily be recovered. 
In \cite{Chartier2020a} and \cite{Chartier2022}, this method is used to derive micro-macro problems in the contexts of problems with fast periodic oscillations and with stiff  relaxation respectively, allowing the use of standard numerical methods with uniform accuracy.

We exploit the fixed-point approach based on the closed homological equation of \cite{Castella2015}, which we extend to the case of linear problems with quasi-periodic forcing and added exponential decay.
Compared to their setting of non-linear problems with periodic forcing, here the quasi-resonances (often called small divisors) introduced by the quasi-periodicity degrade the regularity with each fixed-point iteration.
We define appropriate functional spaces to quantify this loss of regularity and to take into account the added exponential decay. 
In this context, we show that we may construct a micro-macro decomposition to any order $n \in \setN$.

In a second time, we study the derivatives of the thereby-obtained micro and macro variables, and show that their derivatives are uniformly bounded up to the $(n+1)$-th derivative, as opposed to the original problem for which the second derivative is degenerate. 
Because of this, the result of uniform accuracy from \cite{Chartier2020a, Chartier2022} still holds, i.e. we may solve the micro-macro problem (and therefore the original transitional problem) with uniform accuracy using a standard scheme.\\ 
The paper is structured as follows. In Section~\ref{sec:results}, we introduce the formalism surrounding the problem, as well as the assumptions we make. 
Crucially, we describe how to construct the micro-macro decomposition and state our results, which are proven in Section~\ref{sec:proofs}.
In Section~\ref{sec:num}, we present some numerical experiments. 
After briefly introducing the numerical schemes we consider, we showcase the importance of each term in the decomposition and verify the uniform accuracy result thanks to a toy model for which an analytical solution is available. 
Finally, we present the Bloch model, and the derivation of the aforementioned ``transitional'' model on which we apply the micro-macro decomposition.
We show that a naive resolution of the problem has severely degraded accuracy, while the micro-macro method converges with uniform accuracy.

%-------------------------------
\section{Setting and theoretical results}
\label{sec:results}
%-------------------------------

We wish to derive an equivalent less-stiff problem for the ordinary differential equation
\begin{equation}
	\label{eq:ode_u}
	\partial_t u^\eps(t) = a_{t/\eps} u^\eps(t), 
	\qquad
	u^\eps(0) = u_0 \in X, 
\end{equation}
in some Banach space $(X, |\cdot|)$ for positive finite times $t \in [0,T]$ with $T > 0$ independent of $\eps$, and where $a_\tau$ is a linear map from $X$ to $X$ for all $\tau \geq 0$. 
As detailed below, we assume that $a$ is the sum of a (quasi-)periodic part $a^\qp$ and an exponentially decaying part $a^\dmp$. In the sequel, we say that $a$ admits a ``sharp-flat'' decomposition. 

This derivation is conducted using the ansatz 
\begin{equation}
         \label{eq:mima-ansatz-theo}
	u^\eps(t) = \Phi^\eps_{t/\eps} e^{t A^\eps}  \big( \Phi^\eps_0 \big)^{-1} u_0 
\end{equation}
where $\Phi^\eps_\tau$ is a \emph{near-identity} map for all $\tau \geq 0$ and a non-stiff (uniformly bounded w.r.t. $\eps$) averaged field $A^\eps$ is obtained from $a_\tau$. 
In general, such maps cannot be computed exactly, therefore we seek to compute, for any given $n \in \setN$, \emph{approximate} maps $\Phi\rk n$, $A\rk n$ such that the solution $u^\eps$ of~\eqref{eq:ode_u} may be decomposed into 
\begin{equation*}
	%\label{eq:mima-ansatz}
	u^\eps(t) = \Phi\rk n_{t/\eps} e^{t A\rk n} \big( \Phi\rk n_0 \big)^{-1} u_0 + w\rk n(t)
\end{equation*}
where $w\rk n = \bigO(\eps^{n+1})$. 
We call this error of approximation $w\rk n$ the \emph{micro} variable and the slow part $v\rk n(t) = e^{t A\rk n} \big( \Phi\rk n_0 \big)^{-1} u_0$ the \emph{macro} variable, which satisfy the so-called micro-macro problem 
\begin{equation*}
	\begin{cases}
		\partial_t v\rk n(t) = A\rk n v\rk n(t), & v\rk n(0) = \big( \Phi\rk n_0 \big)^{-1} u_0, \\
		\partial_t w\rk n(t) = a_{t/\eps} w\rk n(t) - \delta\rk n_{t/\eps} v\rk n(t) , \quad & w\rk n(0) = 0,
	\end{cases}
\end{equation*}
for some defect $\delta\rk n$ computed from $\Phi\rk n$ and $A\rk n$. 
Our goal is to prove that this new problem is \emph{non-stiff} up to the $(n+1)$-th derivative and can therefore be computed with \emph{uniform accuracy} up to order $n$ using standard numerical schemes. 

This section states the theoretical results of this paper.
Subsection~\ref{subsec:def} introduces the mathematical setting and the formalism necessary to state the problem. 
Subsection~\ref{subsec:hyp} describes the assumptions we make on the problem, and define the aforementioned ``sharp-flat decomposition''. 
Subsection~\ref{subsec:results} finally details our construction of the maps $\Phi\rk n$, $A\rk n$ and $\delta\rk n$ and states results on the properties of the micro-macro decomposition. 

\subsection{Definitions and notations}
\label{subsec:def}

Here we introduce some formalism associated to endormorphisms, multivariate periodic functions and exponentially decaying functions. 
We also introduce the time-average operator and some norms associated to such functions. 
Throughout the paper, the set of endomorphisms, denoted $\End(X)$, is endowed with the induced norm, denoted $|\cdot|$.

\begin{definition}[Time average and KBM mappings]
	Given a mapping $\tau \in \setR_+ \mapsto \varphi_\tau \in \End(X)$, we define the time average 
	\begin{equation}
		\label{eq:time-avg}
		\mean{\varphi}
		:= \lim_{\tau \to \infty} \frac1\tau \int_0^\tau \varphi_\sigma \D \sigma.
	\end{equation}
	A \emph{continuous} function $\tau \mapsto \varphi_\tau$ such that this limit converges is called a \emph{KBM} mapping\footnote{%
		The acronym KBM stands for Krylov, Bogoliubov and Mitropolsky, see e.g.~\cite{Sanders2007}.}. 
	We denote $\calE$ the vector space of KBM mappings from $\setR_+$ to $\End(X)$. 
\end{definition}

Among KBM mappings, we are interested in two subspaces which we introduce briefly: quasi-periodic mappings and exponentially decaying mappings.

\subsubsection{Quasi-periodic mappings}

By quasi-periodic, we mean mappings that are generated from a finite number of angular frequencies $\omega_1, \ldots, \omega_r$ and a multivariate $2\pi$-periodic map $\theta \in \setT^r \mapsto \varphi^\qp_\theta$ with $\setT = \setR / (2\pi\setZ)$. 
Denoting $\omega = (\omega_1, \ldots, \omega_r)$, the quasi-periodic map is given by $\tau \mapsto \varphi^\qp_{\omega\tau}$, i.e.~it is given by evaluating $\varphi^\qp$ along the curve $\tau \mapsto (\omega_1\tau, \ldots, \omega_r\tau)$. 
If the generating map $\varphi^\qp$ is continuous, then it coincides with its Fourier series, 
\begin{equation*}
	\forall \theta \in \setT^r, 
	\quad
	\varphi^\qp_\theta = \sum_{\alpha \in \setZ^r} e^{i \alpha\cdot\theta} \widehat\varphi^\qp _\alpha. 
\end{equation*}
Here we use the multi-index notation $\alpha = (\alpha_1, \ldots, \alpha_r) \in \setZ^r$ to obtain the phase $\alpha\cdot\theta = \alpha_1 \theta_1 + \ldots + \alpha_r \theta_r$ and Fourier coefficient $\widehat\varphi^\qp_\alpha \in \End(X)$. 
We furthermore denote $|\alpha| = |\alpha_1| + \ldots + |\alpha_r|$. 
In the sequel the number of frequencies $r$ and the vector of frequencies $\omega$ are fixed.

\begin{definition}
	We define $\calE^\qp$ the set of continuous quasi-periodic maps with frequencies $\omega$, 
	\begin{equation*}
		\calE^\qp 
		:= \left\{ \tau \mapsto \varphi^\qp_{\omega\tau},~ 
		\varphi^\qp \in C^0\big(\setT^r, \End(X) \big) \right\}. 
	\end{equation*}
	Some particularly regular maps are such that the Fourier coefficients are exponentially decreasing, which is quantified by the functional spaces, for $\kappa \geq 0$, 
	\begin{equation*}
		\calT_\kappa := \left\{ \varphi^\qp \in C^0\big(\setT^r, \End(X) \big),~ 
		\sum_{\alpha \in \setZ^r} e^{\kappa|\alpha|} \big| \widehat\varphi^\qp_\alpha \big| < \infty \right\}. 
	\end{equation*}
	The set of quasi-periodic mappings $\tau \mapsto \varphi^\qp_{\omega\tau}$ this generates is denoted $\calE^\qp_\kappa$.
\end{definition}

For $\kappa > 0$, all mappings in $\calT_\kappa$ are smooth, and for $\kappa = 0$, the mappings in $\calT_0$ are continuous. 
Additionally, if $0 \leq \kappa^- \leq \kappa^+$, then $\calT_{\kappa^+} \subset \calT_{\kappa^-}$. 
Therefore, we obtain the following inclusions
\begin{equation*}
	\calE^\qp_{\kappa^+} \subset \calE^\qp_{\kappa^-} \subset \calE^\qp_0 \subset \calE^\qp. 
\end{equation*}

\subsubsection{Exponentially decaying mappings} 

By exponentially decaying, we mean bounded mappings $\tau \mapsto \varphi^\dmp_\tau$ which are also $\bigO(e^{-\tau})$ for $\tau \to \infty$. 

\begin{definition}
	The set of exponentially decaying functions we consider is denoted $\Lexp$ and is defined by 
	\begin{equation*}
		\Lexp := \left\{ \varphi^\dmp \in L^\infty\big( \setR_+, \End(X) \big),~ 
		\sup_{\tau \in \setR_+} e^\tau |\varphi^\dmp_\tau| < \infty \right\}. 
	\end{equation*}
	A subset of this are KBM mappings, which we denote 
	\begin{equation*}
		\calE^\dmp := \Lexp \cap C^0\big( \setR_+, \End(X) \big). 
	\end{equation*}
\end{definition}

\begin{remark}
	The rate of decay (namely 1) is chosen without loss of generality, since it could be obtained with a time rescaling.
\end{remark}

\subsubsection{Norms}

We endow the above defined functional spaces with the following norms.

\begin{definition}[Norms]
	\label{def:norms}
	Remember that we denote $|\cdot|$ the induced norm on $X$. 
	For mappings $\varphi \in \calE$, $\varphi^\qp \in \calT_\kappa$ and $\varphi^\dmp \in \Lexp$, we denote
	\begin{equation*}
		\| \varphi \| := \sup_{\tau \geq 0} |\varphi_\tau|, 
		\qquad
		\| \varphi^\qp \|_\kappa 
		:= \sum_{\alpha \in \setZ^r} e^{\kappa |\alpha|} |\widehat\varphi^\qp_\alpha|, 
		\qquad
		\| \varphi^\dmp \|_\Lexp := \sup_{\tau \geq 0} e^\tau |\varphi^\dmp_\tau|.
	\end{equation*}
\end{definition}

Note that for $0 \leq \kappa^- \leq \kappa^+$ and all $\varphi^\qp \in \calE^\qp_{\kappa^+}$,
\begin{equation}
	\label{eq:kappainequalities}
	\| \varphi^\qp \| \leq \| \varphi^\qp \|_0 \leq \| \varphi^\qp \|_{\kappa^-} \leq \| \varphi^\qp \|_{\kappa^+},
\end{equation}
and for all $\varphi^\dmp \in \Lexp$, 
\begin{equation}
	\label{eq:expinequalities}
	\forall\tau\in\setR^+, \quad | \varphi^\dmp_\tau | \leq e^{-\tau} \| \varphi^\dmp \|_\Lexp.
\end{equation}

\begin{remark}
	In the entire upcoming reflexion, $\calT_\kappa$ could be replaced by the set of functions on $\setT^r$ analytic with radius everywhere greater than $\kappa$. 
	The norm $\|\cdot\|_\kappa$ would then be replaced by the infinite norm on the analytical extension of radius $\kappa$ to the complex domain, i.e. symbolically 
	\begin{equation*}
		\forall \varphi^\qp \in \calT_\kappa, 
		\quad 
		\| \varphi^\qp \|_\kappa \leq \sup_{\zeta \in \setT^r_\kappa} | \varphi^\qp_\zeta |
	\end{equation*}
	with $\setT^r_\kappa := \{ \theta + \xi,~ (\theta, \xi) \in \setT^r \times \setC^r, |\xi| \leq \kappa \}$. 
	For $\theta \in \setT^r, |\xi| \leq \kappa$, the analytical extension is defined as $\varphi^\qp _{\theta + \xi} = \displaystyle \sum\limits_{\alpha \in \setN^r} \frac{\partial^{|\alpha|} \varphi^\qp_\theta}{\partial\theta_1^{\alpha_1} \cdots \partial\theta_r^{\alpha_r}} \frac{\xi_1^{\alpha_1} \cdots \xi_r^{\alpha_r}}{\alpha_1! \cdots \alpha_r!}$.  
\end{remark}

\subsection{Assumptions on the problem}
\label{subsec:hyp}

So far, we have described the two types of KBM mappings we consider, namely the ``sharp'' and the ``flat'' parts of an aforementioned decomposition. 
The ``sharp'' part is obtained in a one-way relationship from a function on $\setT^r$. 
We start by introducing an assumption (namely a non-resonance condition on the frequencies $\omega$) which makes this a two-way relationship. 
Thanks to this, we may define the ``sharp-flat'' decomposition rigorously, allowing us to finally state how our problem fits into this setting.

\subsubsection{Non-resonance}

\begin{assumption}
	\label{hyp:non-resonance}
	The vector of angular frequencies $\omega = (\omega_1, \ldots, \omega_r)$ is strongly non-resonant in the sense that it satisfies the following Diophantine inequality
	\begin{equation}
		\label{eq:diophantine}
		\exists c_D > 0, \quad \exists \nu \geq r-1, \quad
		\forall \alpha \in \setZ^r \setminus \{ \mathbf{0} \}, \quad
		| \alpha \cdot \omega | \geq \frac{c_D}{|\alpha|^\nu},
	\end{equation}
	where $\alpha \cdot \omega = \alpha_1 \omega_1 + \ldots + \alpha_r \omega_r$ and  $\mathbf{0} = (0, \ldots, 0) \in \setZ^r$. In the mono-frequency case $r = 1$, we choose $\nu = 0$ and $c_D = |\omega|$.
\end{assumption}

\begin{remark}
	While this may seem restrictive at first glance, it is classically known (see for instance \cite{Arnold1963}, \cite[App. 8]{Arnold1989} or \cite[Chap. X]{Hairer2006}) that this condition is satisfied for almost all frequency vectors in any bounded subset of $\setR^r$. 
	In practice, we mostly require $\omega$ to be ``well-prepared'' in the sense that no frequencies are rationally dependent, which is \emph{usually} enough for the Diophantine condition to be met.
\end{remark}

This assumption implies that the set of cancelling combination is zero -- which is to say that the vector of frequencies $\omega = (\omega_1, \ldots, \omega_r)$ is such that 
\begin{equation*}
	%\label{eq:nonresonant}
	\omega^\perp
	:= \left\{ \alpha \in \setZ^r \text{ s.t. } \alpha \cdot \omega = 0 \right\}
	= \{\mathbf{0} \}.
\end{equation*}
As such, we may apply Arnold's theorem, which states that for continuous quasi-periodic maps, the time average coincides with the phase average. 
This may be written 
\begin{equation*}
	\forall \varphi^\qp \in C^0\big(\setT^r, \End(X) \big),\ \forall \theta_0 \in \setT^r, 
	\quad 
	\lim_{T \to \infty} \frac1T \int_0^T \varphi^\qp_{\theta_0 + \omega\tau} \D\tau 
	= \frac1{(2\pi)^r} \int_{\setT^r} \varphi^\qp_{\theta} \D\theta. 
\end{equation*}
In particular, this means that we can recover the Fourier coefficients of the function $\theta \mapsto \varphi^\qp_\theta$ from the quasi-periodic function $\tau \mapsto \varphi^\qp_{\omega\tau}$. 
As such we may identify both functions and extend the notion of Fourier series to the quasi-periodic mapping of $\calE^\qp_\kappa$.

\subsubsection{``Sharp-flat'' decomposition}

Under Assumption \ref{hyp:non-resonance}, we introduce the set $\calE_\kappa$ that describes what we name  ``sharp-flat'' maps as well as an associated norm $\calN_\kappa$ that will be used all along this paper to easily handle the (quasi)-periodic and exponentially decaying maps that we consider.

\begin{definition}
	\label{def:direct_sum}
	For all $\kappa \geq 0$, we define
	\begin{equation*}
		\calE_\kappa = \calE^\qp_\kappa \oplus \calE^\dmp.
	\end{equation*}
Given a mapping $\varphi \in \calE_\kappa$, it splits uniquely as $\varphi_\tau = \varphi^\qp_{\omega\tau} + \varphi^\dmp_\tau$ and we endow the space $\calE_\kappa$ with the norm
	\begin{equation*}
		\calN_\kappa(\varphi) := \| \varphi^\qp \|_\kappa + \| \varphi^\dmp \|_\Lexp,
	\end{equation*}
	where $\|\cdot\|_\kappa$ and $\|\cdot\|_\Lexp$ are given in Definition~\ref{def:norms}.
\end{definition}

The proof that the sum is indeed direct, and that this defines a unique ``sharp-flat'' decomposition for elements in $\calE_\kappa$ is presented in Appendix~\ref{app:sec:sharp-flat-properties}. 
To summarize, the average of flat functions is always zero, therefore the Fourier coefficients of a function in $\calE^\qp_\kappa \cap \calE^\dmp$ are all zero. 
Since functions in $\calE^\qp_\kappa$ coincide with their Fourier series, this function can only be zero.

\begin{proposition}
	\label{prop:algebra}
    	The space $\calE_\kappa$ is an algebra, and if $ \varphi$, $\widetilde\varphi \in \calE_\kappa$ then $\psi = \varphi \widetilde\varphi \in \calE_\kappa$ and
    	\begin{equation*}
		\psi_\tau = \psi^\qp_{\omega \tau} + \psi^\dmp_\tau  
		\text{ with }
		\psi^\qp_\theta = \varphi^\qp_\theta \widetilde\varphi^\qp_\theta
		\text{ and }
		\psi^\dmp_\tau  = \varphi^\dmp_\tau \widetilde\varphi^\qp_{\omega \tau} 
		+ \varphi^\qp_{\omega \tau} \widetilde\varphi^\dmp_\tau  
		+ \varphi^\dmp_\tau \widetilde\varphi^\dmp_\tau. 
    	\end{equation*}
 	Furthermore, the norm is algebraic, i.e. 
    	\begin{equation*}
        		\calN_\kappa(\psi) \leq \calN_\kappa(\varphi) \calN_\kappa(\widetilde\varphi). 
       	\end{equation*}
\end{proposition}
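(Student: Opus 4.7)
The plan is a direct verification. I would start by expanding the product: writing $\varphi_\tau = \varphi^\qp_{\omega\tau} + \varphi^\dmp_\tau$ and $\widetilde\varphi_\tau = \widetilde\varphi^\qp_{\omega\tau} + \widetilde\varphi^\dmp_\tau$, the product $\psi_\tau = \varphi_\tau \widetilde\varphi_\tau$ expands into four terms. I would then regroup them as the proposition suggests: the ``purely sharp'' term $\varphi^\qp_{\omega\tau}\widetilde\varphi^\qp_{\omega\tau}$, and the three remaining mixed/flat terms. The goal is to show that the first belongs to $\calE^\qp_\kappa$, that the three others belong to $\calE^\dmp$, and that the uniqueness result of Appendix~\ref{app:sec:sharp-flat-properties} then identifies them with $\psi^\qp$ and $\psi^\dmp$.

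For the sharp part, I would identify $\psi^\qp_\theta := \varphi^\qp_\theta \widetilde\varphi^\qp_\theta$ as a continuous function on $\setT^r$. The key computation is the Fourier convolution identity $\widehat{\psi^\qp}_\alpha = \sum_{\beta \in \setZ^r} \widehat{\varphi^\qp}_\beta\, \widehat{\widetilde\varphi^\qp}_{\alpha - \beta}$, from which, using the subadditivity $|\alpha| \leq |\beta| + |\alpha - \beta|$, one gets
\begin{equation*}
\| \psi^\qp \|_\kappa
\leq \sum_{\alpha,\beta \in \setZ^r} e^{\kappa|\beta|} e^{\kappa|\alpha - \beta|} \big|\widehat{\varphi^\qp}_\beta\big|\, \big|\widehat{\widetilde\varphi^\qp}_{\alpha-\beta}\big|
= \| \varphi^\qp \|_\kappa\, \| \widetilde\varphi^\qp \|_\kappa.
\end{equation*}
This simultaneously shows $\psi^\qp \in \calT_\kappa$ and gives the bound needed later.

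For the flat part, I would define $\psi^\dmp_\tau$ as the sum of the three remaining cross terms. Each one is continuous, and applying~\eqref{eq:expinequalities} together with the elementary bound $\| \widetilde\varphi^\qp \| \leq \| \widetilde\varphi^\qp \|_\kappa$ from~\eqref{eq:kappainequalities} (and its analogue for $\varphi^\qp$), I would obtain, for every $\tau \geq 0$,
\begin{equation*}
e^\tau |\psi^\dmp_\tau|
\leq \| \varphi^\dmp \|_\Lexp \| \widetilde\varphi^\qp \|_\kappa
+ \| \varphi^\qp \|_\kappa \| \widetilde\varphi^\dmp \|_\Lexp
+ e^{-\tau}\| \varphi^\dmp \|_\Lexp \| \widetilde\varphi^\dmp \|_\Lexp,
\end{equation*}
so $\psi^\dmp \in \calE^\dmp$ with $\| \psi^\dmp \|_\Lexp$ bounded by the right-hand side evaluated at $\tau = 0$.

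Finally, I would add the two estimates and factor the result as
\begin{equation*}
\calN_\kappa(\psi)
\leq \big( \| \varphi^\qp \|_\kappa + \| \varphi^\dmp \|_\Lexp \big)\big( \| \widetilde\varphi^\qp \|_\kappa + \| \widetilde\varphi^\dmp \|_\Lexp \big)
= \calN_\kappa(\varphi)\, \calN_\kappa(\widetilde\varphi),
\end{equation*}
which concludes. There is no real obstacle here; the only subtle point is making sure that the decomposition $\psi = \psi^\qp_{\omega\cdot} + \psi^\dmp$ is the canonical one, which follows from the uniqueness of the sharp--flat splitting established in the appendix. The rest is a convolution estimate and three pointwise bounds.
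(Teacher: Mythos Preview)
Your proposal is correct and follows essentially the same argument as the paper's own proof: expand the product into four terms, identify the sharp part via the Fourier convolution estimate $\|\varphi^\qp\widetilde\varphi^\qp\|_\kappa \leq \|\varphi^\qp\|_\kappa\|\widetilde\varphi^\qp\|_\kappa$, bound $e^\tau|\psi^\dmp_\tau|$ termwise, invoke the uniqueness of the sharp--flat decomposition, and factor. The only cosmetic difference is the order of presentation (the paper treats the flat part first, then the sharp part), but the content is identical.
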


Again, the proof is presented in Appendix~\ref{app:sec:sharp-flat-properties}.  
To finish, we present some norm inequalities that will be often used in the next parts.
Let $\varphi \in \calE_\kappa$.  
The flat part of its time average is zero, i.e. $\mean{\varphi}^\flat = 0$. Indeed, since $\mean{\varphi}$ is constant, it is therefore periodic and belongs to $\calE_\kappa$ for all $\kappa \geq 0$ with norm $\calN_\kappa(\mean{\varphi}) = | \mean{\varphi} |$. 
Consequently,
\begin{equation*}
	\calN_\kappa(\varphi - \mean{\varphi}) 
	= \| \varphi^\qp - \mean{\varphi^\qp} \|_\kappa + \| \varphi^\dmp \|_\Lexp.
\end{equation*}
Then, since $\mean{\varphi^\qp} = \widehat\varphi^\qp_0$,
	\begin{equation*}
	\| \varphi^\qp - \mean{\varphi^\qp} \|_\kappa 
	= \sum_{\alpha \in \setZ^r, \alpha \neq 0} e^{\kappa|\alpha|} \left| \widehat\varphi^\qp_\alpha \right|
	\leq \sum_{\alpha \in \setZ^r} e^{\kappa|\alpha|} \left| \widehat\varphi^\qp_\alpha \right| 
	= \| \varphi^\qp \|_\kappa. 
	\end{equation*}
It follows that
\begin{equation*}
        \calN_\kappa(\varphi - \mean{\varphi}) \leq \calN_\kappa(\varphi).
    \end{equation*}
Also, let $0 \leq \kappa_- \leq \kappa_+$ and $\varphi \in \calE_{\kappa_+}$. 
From the definition of the time average \eqref{eq:time-avg} and the inequalities \eqref{eq:kappainequalities} and \eqref{eq:expinequalities}, it clearly holds that
\begin{equation}
	\label{eq:ineq_norm_average}
	| \mean{\varphi} | \leq \| \varphi \| \leq \calN_{\kappa_-}(\varphi) \leq \calN_{\kappa_+}(\varphi).
\end{equation}
    
\subsubsection{Assumptions on the linear map}

	This functional setting being laid out, let us describe how our problem fits into it. 
	Specifically, we introduce positive constants $\mu$ and $M$ which  quantify respectively the regularity of $a^\qp$ and the size of $\tau \mapsto a_\tau$.

\begin{assumption}
	\label{hyp:analyticity_and_bounds}
	The mapping $\tau \mapsto a_\tau \in \End(X)$ is in $\calE_\mu = \calE^\qp_\mu \oplus \calE^\dmp$ for some $\mu > 0$, i.e. it can be written 
	\begin{equation*}
		a_\tau = a^\qp_{\omega\tau} + a^\dmp_\tau
	\end{equation*}
	with $a^\qp \in \calT_\mu$ and $a^\dmp \in \Lexp \cap C^0$. 
	Furthermore, we define a constant $M > 0$ such that 
	\begin{equation*}
		\| a \| \leq \calN_\mu(a) \leq M. 
	\end{equation*}
\end{assumption}

\begin{remark}
	In the mono-frequency case $r = 1$, we may replace the analyticity assumption on $a^\qp$ by a continuity assumption.
\end{remark}

Since the goal of the present work is to apply numerical schemes, it is necessary to quantify the regularity of $\tau \mapsto a_\tau$. 
Indeed, if $a \in C^q$, then a scheme of order $s > q$ will see its order reduced to $q$, even in the non-stiff regime.

\begin{assumption}
	\label{hyp:derivatives}
	The mapping $\tau \mapsto a_\tau$ is of class $C^q$ for some $q \in \setN$. 
	Additionally, there exists $C_a^{(q)} > 0$ such that 
	\begin{equation*}
		\sup_{0 \leq p \leq q} \| \partial_\tau^p a \| \leq C_a^{(q)} M.
	\end{equation*}
\end{assumption}

\subsection{Results on the decomposition}
\label{subsec:results}

We may now construct the micro-macro decomposition by performing asymptotic expansions which separate the exponential dynamics in $\calE_\mu - \mean{\calE_\mu}$ (contained in a change of variable $\Phi^\eps$) from the average dynamics in $\mean{\calE_\mu}$ (contained in a vector field $A^\eps$).
From these, we derive a micro-macro problem which can be solved with uniform accuracy. 
This section describes this construction and states our results of well-posedness and uniform accuracy. 
The proofs can be found in the next section.

\subsubsection{Homological equation}

Injecting the exact decomposition \eqref{eq:mima-ansatz-theo} into \eqref{eq:ode_u}, we obtain the following ``homological equation'' on $\Phi^\eps$ and $A^\eps$, 
\begin{equation}
	\label{eq:homological}
	\partial_\tau \Phi^\eps_\tau
	= \eps \big( a_\tau \Phi^\eps_\tau - \Phi^\eps_\tau A^\eps \big) , 
	\text{ where }
	A^\eps = \Mean{\Phi^\eps}^{-1} \mean{a \Phi^\eps},
\end{equation}
which may be rewritten by introducing a non-linear operator $\Lambda$, 
\begin{equation*}
	\partial_\tau \Phi^\eps_\tau = \eps \Lambda\big\{ \Phi^\eps \big\}_\tau 
	\text{ with }
	\Lambda\{\varphi\}_\tau = a_\tau \varphi_\tau - \varphi_\tau \mean{\varphi}^{-1} \mean{a \varphi}.
\end{equation*}
In particular if $\mean{\varphi} = \id$, then $\Lambda\{\varphi\}_\tau = a_\tau \varphi_\tau  - \varphi_\tau \mean{a \varphi}$. 

Then, we define approximations of $\Phi^\eps$ and $A^\eps$ by a fixed point iteration. 
Starting from $\Phi\rk0 = \id$, we construct them with the relations
\begin{equation}
	\label{eq:def_Phin}
	A\rk n = \Mean{\Phi\rk n}^{-1} \mean{a \Phi\rk n}, 
	\qquad
	\partial_\tau \Phi\rk{n+1}_\tau  = \eps \Lambda\big\{ \Phi\rk n \big\}_\tau.
\end{equation}

\subsubsection{Closure condition}

In order to solve the homological equation as well as its approximations, one needs to impose a closure condition on $\Phi^\eps$ (and consequently on $\Phi\rk n$). 
In this paper\footnote{%
	Another common choice is $\Phi^\eps_0 = \id$, which presents nice geometric properties but leads to more complex calculations---see e.g.~\cite{Chartier2020a}. 
	Since these properties are not needed here, we do not consider this possibility, although all our estimates remain valid with this geometric closure condition, up to some tweaking of the constants.}, 
we consider the so-called standard averaging by choosing  $\mean{\Phi^\eps} = \id$ (and consequently $\mean{\Phi\rk n} = \id$).
This yields the relations 
\begin{subequations}
	\label{eq:iter_Phi}
	\begin{align*}
		A\rk n & = \mean{a \Phi\rk n}, \\
		\Phi\rk{n+1}_\tau & = \id + \eps \int_0^\tau \Lambda\big\{\Phi\rk n\big\}_\sigma \D\sigma 
		- \eps \MEAN{\int_0^{\:\scriptscriptstyle\bullet} \Lambda \big\{ \Phi\rk n \big\}_\sigma \D\sigma}.
	\end{align*}
\end{subequations}
These iterations may be performed explicitly using symbolic calculus. 

\subsubsection{Micro-macro variables}

To motivate the interest of these approximations, let us immediately introduce the \emph{micro-macro variables} $(v\rk n, w\rk n)$ given by 
\begin{equation*}
	u^\eps(t) = \Phi\rk n_{t/\eps} v\rk n(t) + w\rk n(t)
\end{equation*}
with $v\rk n(t) = e^{t A\rk n} \big( \Phi\rk n_0 \big)^{-1}(u_0)$. 
A straightforward computation yields the following \emph{micro-macro problem}, 
\begin{subequations}
	\label{eq:mima}
	\begin{empheq}[left=\left\lbrace,right=\right.]{align}
		\label{eq:mima_v}
		\partial_t v\rk n(t) & = A\rk n v\rk n(t), 
		& & v\rk n(0) = \big( \Phi\rk n _0 \big)^{-1} (u_0), \\
		\label{eq:mima_w}
		\partial_t w\rk n(t) & = a_{t/\eps} w\rk n(t) - \delta\rk n_{t/\eps} v\rk n(t),
		& & w\rk n(0) = 0,
	\end{empheq}
\end{subequations}
where we introduced the defect $\delta\rk n$ defined as
\begin{equation}\label{eq:def_delta}
	\delta\rk n_\tau
	= \frac1\eps \partial_\tau \Phi\rk n_\tau
	- \left( a_\tau \Phi\rk n_\tau - \Phi\rk n_\tau A\rk n \right) 
	= \Lambda\{ \Phi\rk{n-1} \}_\tau - \Lambda\{ \Phi\rk n \}_\tau
\end{equation}
with the convention $\Lambda\{ \Phi\rk{-1} \} = 0$. 
This defect is of zero time average, i.e. $\mean{\delta\rk n} = 0$, and quantifies the quality of approximation in the homological equation~\eqref{eq:homological}. 
Combined with the drift $v\rk n$, it generates a source term in the equation on $w\rk n$. 

The equation \eqref{eq:mima_w} seems to be as stiff as the original equation \eqref{eq:ode_u} at first glance, but we will see in the sequel that, due to the small size of $w\rk n(0)$ and of $\delta\rk n$, $w\rk n$ remains of size $\bigO(\eps^{n+1})$ at all times. 
This initializes an induction to prove that the derivatives of $w\rk n$ are bounded up to order $n+1$.

\subsubsection{Well-posedness of the micro-macro decomposition}

In the introduction of Section~\ref{sec:results} we have claimed to construct a near-identity map $\Phi^\eps _\tau$. 
In the construction of $\Phi\rk n$, we introduce a parameter $c\in]0,1[$ and assume that $\| \Phi\rk n - \id \| \leq c$. 
This will be valid for a small enough $\eps\leq\eps_n$.
The upper bound $\eps_n$, which depends on both $n$ and $c$, will be constructed along the proof of the following theorem that sums up the well-posedness of the micro-macro decomposition.

\begin{theorem}
	\label{thm:decomp_mima}
	Under Assumptions~\ref{hyp:non-resonance} and \ref{hyp:analyticity_and_bounds}, for all $n \in \setN$, there exists $\eps_n > 0$ such that the decomposition of order $n$ exists, meaning that, for $\eps \leq \eps_n$,
	\begin{equation}
		\label{prop:asympt:eq:Phi}
		\| \Phi\rk n - \id \| \leq \frac{c\eps}{\eps_n} \leq c, 
	\end{equation}
	\begin{equation}
		\label{prop:asympt:eq:A}
		| A\rk n | \leq (1+c) M. 
	\end{equation}
	Furthermore, integrating the defect $t \mapsto \delta\rk n_{t/\eps}$ yields an error of approximation of size $\bigO(\eps^{n+1})$, which is translated through the relation 
	\begin{equation}
		\label{prop:asympt:eq:delta}
		\forall \tau \geq 0, 
		\quad 
		\left| \eps \int_0^\tau \delta\rk n_\sigma \D\sigma \right| 
		\leq \left( \frac\eps{\eps_n} \right)^{n+1}.
	\end{equation}
\end{theorem}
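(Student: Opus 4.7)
}
My plan is an induction on $n$, built around a single auxiliary lemma: an \emph{integration estimate} in the algebra $\calE_\kappa$ which says that for any mean-zero $\varphi \in \calE_{\kappa^+}$ and any $0 \leq \kappa^- < \kappa^+ \leq \mu$, the primitive $\Psi_\tau = \int_0^\tau \varphi_\sigma \D\sigma - \mean{\int_0^{\:\scriptscriptstyle\bullet} \varphi}$ lies in $\calE_{\kappa^-}$ with a bound of the form
\begin{equation*}
   \calN_{\kappa^-}(\Psi) \leq \frac{C_\nu}{(\kappa^+ - \kappa^-)^\nu}\,\calN_{\kappa^+}(\varphi),
\end{equation*}
where the sharp part is handled by dividing Fourier coefficients by $i\alpha\cdot\omega$ and invoking the Diophantine inequality \eqref{eq:diophantine} together with the standard bound $|\alpha|^\nu e^{\kappa^-|\alpha|} \leq C_\nu (\kappa^+ - \kappa^-)^{-\nu} e^{\kappa^+|\alpha|}$, while the flat part is dealt with by writing $\int_0^\tau \varphi^\dmp_\sigma \D\sigma = \int_0^\infty \varphi^\dmp_\sigma \D\sigma - \int_\tau^\infty \varphi^\dmp_\sigma \D\sigma$ (the first term being a constant belonging to the sharp part, the second retaining the exponential decay). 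This lemma is the engine that turns a mean-zero bound on $\Lambda\{\Phi\rk n\}$ into a bound on the increment $\Phi\rk{n+1} - \Phi\rk n$.

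I would then run the induction on a decreasing sequence $\mu = \kappa_0 > \kappa_1 > \cdots > \kappa_n \geq \mu/2$ with telescoping losses $\kappa_n - \kappa_{n+1}$ chosen so that $\sum (\kappa_n - \kappa_{n+1})^{-\nu}$ remains controllable, and on the following inductive hypotheses: $\Phi\rk n \in \calE_{\kappa_n}$ with $\calN_{\kappa_n}(\Phi\rk n - \id) \leq c\eps/\eps_n$, $|A\rk n| \leq (1+c)M$, and $\eps\,\calN_{\kappa_n}(\int_0^{\:\scriptscriptstyle\bullet}\delta\rk n) \leq (\eps/\eps_n)^{n+1}$. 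The base case $n=0$ is immediate: $\Phi\rk0 = \id$, $A\rk0 = \mean a$ is bounded by $M$ via \eqref{eq:ineq_norm_average}, and $\delta\rk0 = -(a - \mean a)$ so the defect estimate follows directly from Assumption~\ref{hyp:analyticity_and_bounds}.

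For the inductive step, I would combine three ingredients. \emph{(a)} Using $\partial_\tau(\Phi\rk{n+1} - \Phi\rk n) = -\eps\,\delta\rk n$ and the closure $\mean{\Phi\rk k} = \id$, I write $\Phi\rk{n+1} - \Phi\rk n$ as the mean-zero primitive of $-\eps\,\delta\rk n$ and apply the integration lemma from $\kappa_n$ down to $\kappa_{n+1}$, which turns the inductive bound on $\eps\int_0^{\:\scriptscriptstyle\bullet}\delta\rk n$ into control of $\calN_{\kappa_{n+1}}(\Phi\rk{n+1} - \Phi\rk n)$, hence of $\Phi\rk{n+1} - \id$ by telescoping. \emph{(b)} The bound on $A\rk{n+1} = \mean{a\,\Phi\rk{n+1}}$ follows from the algebra inequality of Proposition~\ref{prop:algebra}, \eqref{eq:ineq_norm_average} and the already-proved bound on $\Phi\rk{n+1} - \id$. \emph{(c)} For the new defect $\delta\rk{n+1} = \Lambda\{\Phi\rk n\} - \Lambda\{\Phi\rk{n+1}\}$, I use that $\Lambda$ is (at most) quadratic in its argument: expanding $\Lambda\{\Phi\rk n\} - \Lambda\{\Phi\rk{n+1}\}$ as $a(\Phi\rk n - \Phi\rk{n+1}) - (\Phi\rk n - \Phi\rk{n+1})A\rk n - \Phi\rk{n+1}(A\rk n - A\rk{n+1})$, the algebra norm yields a bound in $\calN_{\kappa_{n+1}}(\delta\rk{n+1}) \lesssim M\,\calN_{\kappa_{n+1}}(\Phi\rk n - \Phi\rk{n+1})$, and a final application of the integration lemma from $\kappa_{n+1}$ to $\kappa_{n+2}$ (or a direct integration keeping the $\eps$ factor) yields the required $(\eps/\eps_{n+1})^{n+2}$ bound on $\eps\int_0^{\:\scriptscriptstyle\bullet}\delta\rk{n+1}$.

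The main obstacle is bookkeeping: at each step the Diophantine loss introduces a factor $C_\nu(\kappa_n - \kappa_{n+1})^{-\nu}$ and the algebra estimate introduces factors of $M$ and of $(1+c)$, so the constant in the bound grows like some explicit (but $\eps$-independent) function of $n$. The role of $\eps_n$ is precisely to absorb this growing constant into one $\eps$ factor per iteration; concretely, $\eps_n$ is defined as the largest $\eps$ for which the recursive inequalities close, and one has to check that it is strictly positive. Balancing the telescoping $\kappa_n - \kappa_{n+1} \sim \mu/n^2$ and tracking the resulting product of constants is the only delicate part of the argument; once this is done, the three stated bounds \eqref{prop:asympt:eq:Phi}--\eqref{prop:asympt:eq:delta} fall out of the induction.
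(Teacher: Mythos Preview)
Your proposal is correct and follows the same approach as the paper: an integration lemma with Diophantine regularity loss (the paper's Lemma~\ref{lemma:integration}), a Lipschitz estimate on $\Lambda$ (Lemma~\ref{lemma:bound_Lambda}), and an induction that trades analyticity width for powers of $\eps$. The one organizational difference is that the paper fixes the target order $n$ from the outset and uses the \emph{equally spaced} scale $\mu_k = (1 - k/(n+1))\mu$, so every application of the integration lemma carries the same constant $c_I(\mu/(n+1))$; this makes the bookkeeping you flag as delicate completely explicit, giving $\eps_n = c/\big(c_I(\mu/(n+1))\,N_c M\big) \sim (n+1)^{-\nu}$, sharper than what your universal sequence with $\kappa_n - \kappa_{n+1} \sim \mu/n^2$ would yield.
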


This theorem will be proven in Section~\ref{sec:proof_wellposedness}. 
A direct consequence of \eqref{prop:asympt:eq:Phi} is that the inverse $(\Phi\rk n_0)^{-1}$ is well-defined and may be bounded 
\begin{equation}
	\label{eq:bound_inv_Phi}
	| (\Phi\rk n_0)^{-1} | \leq 1 / (1-c).
\end{equation}
This is crucial for the definition and the boundedness of $v\rk n$. 
Moreover, a direct by-product of the proof is the following bound on the defect 
\begin{equation}
	\label{eq:bound_delta}
	\| \delta\rk n \| \leq M \left( \frac\eps{\eps_n} \right)^n.
\end{equation}

The coefficient $\eps_n$ may be chosen of the form $\eps_n = \eps_0 / (n+1)^\nu$ with $\eps_0$ depending on $c$ and the constants appearing in Assumptions~\ref{hyp:non-resonance} and \ref{hyp:analyticity_and_bounds}. 
As such, increasing the order of accuracy requires a reduction in the maximum size of $\eps$. 	
However, in the mono-frequency case $r = 1$, $\nu = 0$, this reduction does no longer appear and the iterations converge for $\eps \leq \eps_0$\footnote{%
	This is a known result of single-frequency linear averaging, available e.g. in~\cite{Castella2015} with a straightforward proof. 
	In their paper, these authors analyze the non-linear setting, and here we analyze the multi-frequency setting with an added decay, further extending the method.}.

As consequence of the well-posedness  of the micro-macro decomposition,  the micro-macro variables $v \rk n$ and  $w \rk n$ solutions to \eqref{eq:mima} are bounded in finite time as stated by the following corollary (proven in Section~\ref{sec:proof_corollary_mima}).

\begin{corollary}
	\label{corollary:mima_well_posed}
	Under Assumptions~\ref{hyp:non-resonance} and \ref{hyp:analyticity_and_bounds}, for any $n \in \setN$ and any $\eps \leq  \eps_n$ (as defined in Theorem~\ref{thm:decomp_mima}), the solutions to \eqref{eq:mima} are bounded at all times by 
	\begin{equation*}
		| v\rk n(t) | \leq e^{(1+c)t M} \frac{|u_0|}{1 - c},
		\quad
		| w \rk n (t) | 
		\leq \left( \frac\eps{\eps_n} \right)^{n+1} e^{t\|a\|} \left( 2e^{(1+c)t M} - \right) \frac{|u_0|}{1-c}.
	\end{equation*}
	Thus, for any $T > 0$, the micro-macro problem~\eqref{eq:mima} is uniformly bounded on $[0,T]$ with $v \rk n$ of size $\bigO(1)$ and $w\rk n$ of size $\bigO(\eps^{n+1})$. 
\end{corollary}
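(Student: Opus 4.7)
The macro equation \eqref{eq:mima_v} has constant coefficient $A\rk n$, so $v\rk n(t)=e^{tA\rk n}(\Phi\rk n_0)^{-1}u_0$. Combining the bound $|A\rk n|\le(1+c)M$ from Theorem~\ref{thm:decomp_mima} with the bound $|(\Phi\rk n_0)^{-1}|\le 1/(1-c)$ from \eqref{eq:bound_inv_Phi} and the standard operator-norm inequality $|e^{tA\rk n}|\le e^{t|A\rk n|}$ immediately yields the announced estimate on $|v\rk n(t)|$.

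For the micro part, let $R(t,s)$ denote the resolvent of the generator $a_{\cdot/\eps}$, that is, $\partial_t R(t,s)=a_{t/\eps}R(t,s)$ with $R(s,s)=\id$; standard linear theory gives $|R(t,s)|\le e^{(t-s)\|a\|}$. Duhamel's formula applied to \eqref{eq:mima_w} yields
\[
w\rk n(t)=-\int_0^t R(t,s)\,\delta\rk n_{s/\eps}\,v\rk n(s)\,ds.
\]
A naive bound based solely on \eqref{eq:bound_delta} would produce only an $\bigO(\eps^n)$ estimate, one power of $\eps$ short of the target. The crucial step is to introduce the primitive $D(\tau):=\int_0^\tau\delta\rk n_\sigma\,d\sigma$, which by \eqref{prop:asympt:eq:delta} satisfies $|\eps D(\tau)|\le(\eps/\eps_n)^{n+1}$ uniformly in $\tau$, and to integrate by parts. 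Writing $\delta\rk n_{s/\eps}=\frac{d}{ds}\bigl[\eps D(s/\eps)\bigr]$, using the identities $\partial_s R(t,s)=-R(t,s)a_{s/\eps}$ and $\partial_s v\rk n=A\rk n v\rk n$, and noting that $D(0)=0$ while $R(t,t)=\id$, the boundary contributions collapse to $\eps D(t/\eps)v\rk n(t)$ and one obtains
\[
w\rk n(t)=-\eps D(t/\eps)v\rk n(t)-\int_0^t R(t,s)\bigl[a_{s/\eps}\,\eps D(s/\eps)-\eps D(s/\eps)\,A\rk n\bigr]v\rk n(s)\,ds.
\]

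It then remains to insert the uniform bound $|\eps D|\le(\eps/\eps_n)^{n+1}$, the submultiplicativity of the operator norm to control the bracket by $(\|a\|+|A\rk n|)(\eps/\eps_n)^{n+1}\le(2+c)M(\eps/\eps_n)^{n+1}$, the estimate $|R(t,s)|\le e^{(t-s)\|a\|}$, and the bound on $|v\rk n|$ from step one. Collecting the boundary and integral contributions gives an estimate of the announced form $(\eps/\eps_n)^{n+1}e^{t\|a\|}\bigl(2e^{(1+c)tM}-1\bigr)|u_0|/(1-c)$. The only real obstacle is the order reduction at the naive Duhamel stage: since $\delta\rk n$ is pointwise only $\bigO(\eps^n)$ while its primitive is $\bigO(\eps^{n+1})$, the integration by parts exploiting the zero-average property is essential to reach the optimal order in $\eps$, after which the rest is straightforward Grönwall-style bookkeeping.
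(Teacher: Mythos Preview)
Your proof is correct and rests on the same key observation as the paper's: a direct bound using $\|\delta\rk n\|=\bigO(\eps^n)$ loses one power of $\eps$, so one must integrate by parts to exploit the integrated estimate~\eqref{prop:asympt:eq:delta}. The organization differs slightly. The paper first applies Gr\"onwall to~\eqref{eq:mima_w} to factor out $e^{t\|a\|}$, and is then left with the simpler quantity $\int_0^{t'}\delta\rk n_{s/\eps}v\rk n(s)\,\D s$, on which it integrates by parts against $v\rk n$ alone; only $A\rk n$ appears, and the coefficient $2(1+c)M$ integrates cleanly to $2e^{(1+c)tM}-1$. You instead keep the resolvent $R(t,s)$ inside via Duhamel and integrate by parts against both $R$ and $v\rk n$, producing the bracket $a_{s/\eps}\,\eps D-\eps D\,A\rk n$ with bound $(\|a\|+|A\rk n|)\le(2+c)M$.

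Both routes secure $w\rk n=\bigO(\eps^{n+1})$, but note that your final step does not literally reproduce the stated constant: after bounding $e^{(t-s)\|a\|}\le e^{t\|a\|}$ in the integral term and adding the boundary term, you obtain a prefactor $e^{(1+c)tM}+\tfrac{2+c}{1+c}\bigl(e^{(1+c)tM}-1\bigr)$ times $e^{t\|a\|}$, rather than $2e^{(1+c)tM}-1$. This is a cosmetic discrepancy with no effect on the corollary's qualitative conclusion; the paper's Gr\"onwall-first ordering is what yields the precise constant as written.
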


\begin{remark}
	Here we impose the order of the expansion and deduce the size of the error term from it. 
	However, the condition $\eps \leq \eps_0/(n+1)^\nu$ may be interpreted the other way around, and one may wish to choose an ``optimal'' order $n$ depending on the size of $\eps$. 
	This yields the optimal exponential bound on $w\rk{n(\eps)}$ for $\nu > r - 1$ found in~\cite{Simo1994}.
\end{remark}

\subsubsection{Well-posedness of the derivatives and uniform accuracy}
\label{sec:result_ua}

In order to analyze the order of numerical schemes based on the micro-macro problem, we also need estimates on time-derivatives of the defect. 
\begin{theorem}
	\label{thm:bound_derivatives_delta}
	Under Assumptions~\ref{hyp:non-resonance}, \ref{hyp:analyticity_and_bounds} and \ref{hyp:derivatives}, the derivatives of the defect up to order $q$ remain of size $\bigO(\eps^n)$. 
	Specifically, there exists $C^{(q)}_\delta > 0$, depending on $q$, $c$ and $n$, such that, for all $\eps \leq \eps_n$, 
	\begin{equation*}
		\sup_{0 \leq p \leq q} \| \partial_\tau^p \delta\rk n \| 
		\leq C^{(q)}_\delta M \left( \frac\eps{\eps_n} \right)^n.
	\end{equation*}	
\end{theorem}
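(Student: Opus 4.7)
The proof proceeds by differentiating a structural rewriting of the defect. Writing $\Lambda\{\Phi\rk m\} = a\Phi\rk m - \Phi\rk m A\rk m$ (using the closure $\mean{\Phi\rk m}=\id$), telescoping gives
\[
\delta\rk n_\tau = a_\tau \Delta\rk n_\tau - \Delta\rk n_\tau A\rk{n-1} - \Phi\rk n_\tau \alpha\rk n,
\]
with $\Delta\rk n := \Phi\rk{n-1}-\Phi\rk n$ and $\alpha\rk n := A\rk{n-1}-A\rk n = \mean{a\Delta\rk n}$. Theorem~\ref{thm:decomp_mima} (together with \eqref{eq:bound_delta}) already yields that $\|\Delta\rk n\|$ and $|\alpha\rk n|$ are both of size $\bigO\big((\eps/\eps_n)^n\big)$; the whole challenge is to propagate this smallness through successive $\tau$-derivatives. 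The second ingredient is the pair of identities, immediate from $\partial_\tau\Phi\rk n = \eps\,\Lambda\{\Phi\rk{n-1}\}$:
\[
\partial_\tau^{p+1}\Phi\rk n = \eps\,\partial_\tau^p \Lambda\{\Phi\rk{n-1}\}, \qquad \partial_\tau^{p+1}\Delta\rk n = \eps\,\partial_\tau^p \delta\rk{n-1},
\]
so one $\tau$-derivative of $\Phi\rk n$ or of $\Delta\rk n$ pays a factor $\eps$ while lowering the upper index by one.

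I would then run two successive inductions on the derivative order $p$. The first one controls the vector field $\Lambda\{\Phi\rk m\}$: I would prove $\|\partial_\tau^p\Lambda\{\Phi\rk m\}\| \leq K_p M$ uniformly in $m$ by expanding Leibniz on $a\Phi\rk m - \Phi\rk m A\rk m$, substituting $\partial_\tau^k\Phi\rk m = \eps\,\partial_\tau^{k-1}\Lambda\{\Phi\rk{m-1}\}$ for $k\geq 1$, and invoking Assumption~\ref{hyp:derivatives} on $a$, the inductive hypothesis at order $p-1$, together with the uniform bounds $\|\Phi\rk m\|\leq 1+c$ and $|A\rk m|\leq (1+c)M$ from Theorem~\ref{thm:decomp_mima}. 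The second induction is the main one: I would prove $\|\partial_\tau^p\delta\rk m\| \leq K_{p,m}\,M\,(\eps/\eps_m)^m$ for all $m$ simultaneously, using \eqref{eq:bound_delta} as the base case $p=0$. For the inductive step I differentiate the structural identity via Leibniz; every resulting term contains either $\partial_\tau^k\Delta\rk m$ or $\alpha\rk m$ as a factor, and the identity $\partial_\tau^k\Delta\rk m = \eps\,\partial_\tau^{k-1}\delta\rk{m-1}$ (for $k\geq 1$), combined with the inductive hypothesis at order $p-1$ applied at index $m-1$, supplies the required smallness. The derivatives of $\Phi\rk m$ multiplying $\alpha\rk m$ are handled by the first induction (for $k\geq 1$) or by $\|\Phi\rk m\|\leq 1+c$ (for $k=0$), and the derivatives of $a$ are uniformly bounded by Assumption~\ref{hyp:derivatives}.

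The main obstacle is the intertwining of the two inductions: estimating $\partial_\tau^p\delta\rk m$ requires the bound on $\partial_\tau^{p-1}\delta\rk{m-1}$, so the induction must proceed jointly in $(p,m)$ rather than in $p$ alone. A secondary technicality is that moving the index from $m-1$ up to $m$ costs a factor $(\eps_{m-1}/\eps_m)^m$ in the constant; since $\eps_m = \eps_0/(m+1)^\nu$ is deterministic in $m$ and since the statement allows $C^{(q)}_\delta$ to depend on $n$, this loss is harmless for fixed $n$.
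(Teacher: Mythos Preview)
Your proposal is correct and follows essentially the same route as the paper: both arguments rest on the identities $\partial_\tau\Phi\rk m=\eps\,\Lambda\{\Phi\rk{m-1}\}$ and $\partial_\tau\Delta\rk m=\eps\,\delta\rk{m-1}$, and both use the telescoped expression $\delta\rk m=a\Delta\rk m-\Delta\rk m A\rk{m-1}-\Phi\rk m\alpha\rk m$ (which is exactly the computation behind the paper's Lipschitz Lemma~\ref{lemma:bound_Lambda}). The only organizational difference is that the paper packages the Leibniz estimates into a separate lemma (Lemma~\ref{lemma:bound_der_Lambda}) formulated with the seminorm $\sup_{0\leq p\leq q}\|\partial_\tau^p\,\cdot\,\|$, thereby collapsing your joint $(p,m)$-induction into a single recursion in the iteration index~$k$; your more explicit double induction achieves the same bound with slightly more bookkeeping.
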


This theorem will be proven in Section~\ref{sec:derivatives}. 
These estimates on the defect gives estimates for the micro-macro variables $v \rk n$ and $w \rk n$ limited by the order of the micro-macro decomposition $n$ and the regularity $q$ of the linear map $a$.

\begin{corollary}
	\label{corollary:mima_bounded_derivatives}
	The derivatives of the micro-macro problem~\eqref{eq:mima} are uniformly bounded up to order $\min(n,q)+1$. 
	Indeed, at fixed final time $T$ and for all $t \in [0,T]$, for all $p \in \{ 0, \ldots, \min(n,q)+1 \}$, 
	\begin{equation*}
		| \partial_t^p v\rk n(t) | = \bigO(1), 
		\qquad
		| \partial_t^p w\rk n(t) | = \bigO(\eps^{n+1-p}) .
	\end{equation*}
\end{corollary}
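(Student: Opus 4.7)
I would split the argument into a trivial treatment of $v\rk n$ and an inductive one for $w\rk n$, both relying on the estimates already collected in Theorems~\ref{thm:decomp_mima} and~\ref{thm:bound_derivatives_delta} and in Corollary~\ref{corollary:mima_well_posed}.

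For the macro variable, equation~\eqref{eq:mima_v} is a constant-coefficient linear ODE, so $\partial_t^p v\rk n(t) = (A\rk n)^p v\rk n(t)$. Combining $|A\rk n| \leq (1+c) M$ from Theorem~\ref{thm:decomp_mima} with the bound on $v\rk n$ from Corollary~\ref{corollary:mima_well_posed} gives
$|\partial_t^p v\rk n(t)| \leq ((1+c)M)^p \,(1-c)^{-1}\, |u_0| \, e^{(1+c)tM}$, which is $\bigO(1)$ on $[0,T]$, uniformly in $\eps$ and for any order $p \geq 0$.

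For the micro variable, I would perform a finite induction on $p \in \{0,\ldots,\min(n,q)+1\}$, proving the stronger statement that there exists a constant $C_p$ independent of $\eps$ with $|\partial_t^p w\rk n(t)| \leq C_p \,\eps^{n+1-p}$ on $[0,T]$. The base case $p=0$ is precisely Corollary~\ref{corollary:mima_well_posed}. For the induction step, I would differentiate~\eqref{eq:mima_w} exactly $p-1$ times via Leibniz's rule:
\begin{equation*}
    \partial_t^p w\rk n
    = \sum_{k=0}^{p-1} \binom{p-1}{k} \Big( \partial_t^k[a_{t/\eps}] \, \partial_t^{p-1-k} w\rk n
    - \partial_t^k[\delta\rk n_{t/\eps}] \, \partial_t^{p-1-k} v\rk n \Big).
\end{equation*}
The chain rule gives $\partial_t^k[a_{t/\eps}] = \eps^{-k} (\partial_\tau^k a)_{t/\eps}$ and $\partial_t^k[\delta\rk n_{t/\eps}] = \eps^{-k}(\partial_\tau^k \delta\rk n)_{t/\eps}$, which by Assumption~\ref{hyp:derivatives} and Theorem~\ref{thm:bound_derivatives_delta} are bounded by $C_a^{(q)} M \,\eps^{-k}$ and $C_\delta^{(q)} M (\eps/\eps_n)^n \eps^{-k}$ respectively for all $k \leq q$. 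Feeding this together with the induction hypothesis on $\partial_t^{p-1-k} w\rk n$ and the $\bigO(1)$ bound on $\partial_t^{p-1-k} v\rk n$ proven above into each summand yields $\bigO(\eps^{n+2-p})$ from the $a w$ contributions and, at worst (attained at $k = p-1$), $\bigO(\eps^{n+1-p})$ from the $\delta v$ contributions, closing the induction.

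The bookkeeping of powers of $\eps$ in the Leibniz expansion is where the two ceilings of the statement show up, and is the only subtle point. The constraint $p \leq q+1$ comes from requiring $\partial_\tau^{p-1} a$ and $\partial_\tau^{p-1} \delta\rk n$ to be bounded (the case $k = p-1$ in the sum), which demands $p-1 \leq q$. The constraint $p \leq n+1$ ensures that $\eps^{n+1-p}$ is a non-negative power of $\eps$, which is what qualifies $w\rk n$ as truly \emph{small}; past that threshold the same Leibniz estimate still produces a finite bound but loses all smallness in $\eps$. No further technical obstacle arises: the hard analytic work has been concentrated in Theorem~\ref{thm:bound_derivatives_delta}, and the corollary is a direct power-counting consequence.
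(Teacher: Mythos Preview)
Your proposal is correct and follows essentially the same route as the paper's proof: the macro part uses $\partial_t^{p} v\rk n = (A\rk n)^p v\rk n$ together with the bounds from Theorem~\ref{thm:decomp_mima} and Corollary~\ref{corollary:mima_well_posed}, and the micro part is handled by a finite induction on $p$ via the Leibniz expansion of~\eqref{eq:mima_w}, feeding in Assumption~\ref{hyp:derivatives} for $\partial_\tau^k a$ and Theorem~\ref{thm:bound_derivatives_delta} for $\partial_\tau^k \delta\rk n$. The only cosmetic difference is that the paper normalizes the identity by dividing through by $\eps^{n+1-(p+1)}$ so that every term is manifestly $\bigO(1)$, whereas you track the powers of $\eps$ term by term; the power-counting and the identification of the constraints $p-1 \leq q$ and $p \leq n+1$ are identical.
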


We now consider the discretization of the micro-macro problem~\eqref{eq:mima} on the time interval $[0,T]$. 
To simplify the presentation, we discretize uniformly this time interval introducing $t^\ell=\ell \Delta t$ for $\ell = 0, \dots, L$ where $L+1$ is the number of discretization points and $\Delta t = T/L$ the time step. 
We denote $(v^\ell, w^\ell)$ the approximate values at time $t^\ell$ of the solution of the micro-macro problem \eqref{eq:mima} for a given order $n$.  

\begin{corollary}
	\label{corollary:mima_ua}
	Using a standard stable one-step scheme of non-stiff order $s \leq \min(n,q)$, i.e. a method which exhibits order $s$ of convergence when applied to \eqref{eq:ode_u} with $\eps = 1$ for all $\Delta t \in [0,\Delta t^*]$ (with $\Delta t^* > 0$ being the stability threshold, which depends only on $M$), the micro-macro problem \eqref{eq:mima}  can be solved with uniform accuracy. 
	More precisely, we have, for all $\Delta t \in [0,\Delta t^*]$, the bound 
	\begin{equation*}
	\sup_{\eps \in (0,\eps_n]} \max_{0 \leq \ell \leq L} | \Phi\rk n_{t^\ell/\eps} v^{\ell} + w^{\ell} - u^\eps(t^\ell) | \leq C \Delta t^s,
	\end{equation*}
	where the constant $C$ is independent of $\Delta t$.
\end{corollary}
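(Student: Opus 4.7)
The strategy is to reduce the uniform accuracy claim to classical convergence theory for one-step methods, applied independently to the macro and micro equations of~\eqref{eq:mima}, using crucially the uniform bounds on time-derivatives provided by Corollary~\ref{corollary:mima_bounded_derivatives}.

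First I would split the total error. Since $u^\eps(t^\ell) = \Phi\rk n_{t^\ell/\eps} v\rk n(t^\ell) + w\rk n(t^\ell)$, writing $e_v^\ell = v^\ell - v\rk n(t^\ell)$ and $e_w^\ell = w^\ell - w\rk n(t^\ell)$ gives
\begin{equation*}
    \Phi\rk n_{t^\ell/\eps} v^{\ell} + w^{\ell} - u^\eps(t^\ell)
    = \Phi\rk n_{t^\ell/\eps} e_v^\ell + e_w^\ell,
\end{equation*}
and \eqref{prop:asympt:eq:Phi} gives $|\Phi\rk n_{t^\ell/\eps}| \leq 1+c$. It therefore suffices to bound $|e_v^\ell|$ and $|e_w^\ell|$ separately by $O(\Delta t^s)$ uniformly in $\eps$.

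For the macro part, the equation~\eqref{eq:mima_v} is a linear autonomous ODE with bounded generator $|A\rk n| \leq (1+c)M$ by~\eqref{prop:asympt:eq:A}, and initial condition bounded by \eqref{eq:bound_inv_Phi}. All derivatives $\partial_t^p v\rk n = (A\rk n)^p v\rk n$ are uniformly bounded on $[0,T]$. Hence the scheme, which by assumption has non-stiff order $s$ and is stable for $\Delta t \leq \Delta t^*$ (where $\Delta t^*$ depends only on $M$ and thus on $(1+c)M$), produces an error $|e_v^\ell| \leq C_v \Delta t^s$ with $C_v$ independent of $\eps$.

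For the micro part~\eqref{eq:mima_w}, the right-hand side $F(t,w) = a_{t/\eps} w - \delta\rk n_{t/\eps} v\rk n(t)$ is stiff in $t$, but it is Lipschitz in $w$ with constant $\|a\| \leq M$, uniform in $\eps$. The classical convergence analysis for one-step methods (Lady Windermere's fan, see e.g.~\cite[Sec.~II.3]{Hairer1996}) bounds $|e_w^\ell|$ by a Grönwall-type accumulation of local truncation errors, each of size $C \Delta t^{s+1} \sup_{t \in [0,T]} |\partial_t^{s+1} w\rk n(t)|$, amplified by a stability factor depending only on $\|a\|$ and $T$. By Corollary~\ref{corollary:mima_bounded_derivatives}, $|\partial_t^{s+1} w\rk n(t)| = O(\eps^{n-s})$, which is uniformly bounded in $\eps$ since $s \leq n$. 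This yields $|e_w^\ell| \leq C_w \Delta t^s$ with $C_w$ independent of $\eps$, completing the proof upon combining both bounds.

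\textbf{Main obstacle.} The delicate point is the micro equation, whose right-hand side genuinely contains fast $\eps$-dependent oscillations and decay, so that naively one might expect the convergence theory to degrade. The crux is that the classical local truncation error depends only on the time-derivatives of the \emph{exact} solution $w\rk n$, not on derivatives of the right-hand side, and these are uniformly bounded thanks to the construction carried out in Theorems~\ref{thm:decomp_mima}--\ref{thm:bound_derivatives_delta}. This is precisely the design principle behind the micro-macro approach of~\cite{Chartier2020a, Chartier2022}, and the argument there transfers to the present quasi-periodic-with-decay setting once the derivative bounds of Corollary~\ref{corollary:mima_bounded_derivatives} are in hand.
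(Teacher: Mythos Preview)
Your approach is essentially the same as the paper's: both invoke classical one-step convergence theory, with the local truncation error controlled by the uniform derivative bounds of Corollary~\ref{corollary:mima_bounded_derivatives}. The paper treats $(v\rk n,w\rk n)$ as a single system rather than splitting; your separate treatment is fine (the coupling through the numerical $v^\ell$ in the micro scheme contributes a perturbation $\Delta t\,\|\delta\rk n\|\,|e_v^\ell| = O(\eps^n\Delta t^{s+1})$, which is harmless).

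One point deserves care. Your claim that ``the classical local truncation error depends only on the time-derivatives of the exact solution $w\rk n$, not on derivatives of the right-hand side'' is not generally true for one-step methods beyond order~1: the local error of (say) an explicit Runge--Kutta method is a combination of elementary differentials that individually involve partial derivatives such as $\partial_t f$, $\partial_t^2 f$, which here pick up factors of $\eps^{-1}$, $\eps^{-2}$, etc. The paper handles this by writing the scheme as $y^{\ell+1}=y^\ell+\Delta t\,\mathcal F(t^\ell/\eps,y^\ell,\Delta t)$ and \emph{assuming} that the increment function satisfies $\partial_h^s\mathcal F=O(\partial_t^{s+1}y)$, which it then observes holds for standard and integral Runge--Kutta schemes. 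For the present linear structure this does work: each stiff factor $\eps^{-p}\partial_\tau^p a$ in an elementary differential is multiplied by $w\rk n=O(\eps^{n+1})$, and each $\eps^{-p}\partial_\tau^p\delta\rk n$ is $O(\eps^{n-p})$ by Theorem~\ref{thm:bound_derivatives_delta}, so every elementary differential at level $s+1$ is $O(\eps^{n-s})$ just like $\partial_t^{s+1}w\rk n$. Your conclusion is therefore correct, but the justification should acknowledge this extra verification rather than assert that the right-hand side derivatives do not enter.
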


Using a Runge-Kutta integral scheme, this order may be increased by one. 
For instance, for the problem $\partial_t y(t) = b_{t/\eps} y(t)$, the Runge-Kutta scheme of order 2 is 
\begin{equation*}
	\widetilde{y}^{\ell+1/2} = y^\ell + \Delta t\ b_{t^\ell/\eps}\ y^\ell, 
	\qquad
	y^{\ell+1} = y^\ell + \Delta t\ b_{(t^{\ell} + \Delta t/2)/\eps}\ \widetilde{y}^{\ell+1/2},
\end{equation*}
and, by Runge-Kutta integral scheme of order 2, we mean the scheme
\begin{equation*}
	\widetilde{y}^{\ell+1/2} = y^\ell + \left( \int_{t^\ell}^{t^{\ell} + \Delta t / 2} b_{t/\eps} \D t \right) y^\ell, 
	\qquad
	y^{\ell+1} = y^\ell + \left( \int_{t^\ell}^{t^{\ell+1}} b_{t/\eps} \D t \right) \widetilde{y}^{\ell+1/2}.
\end{equation*}
The idea is to exploit the form of the right-hand side $b_{t/\eps} y(t)$ and to build a scheme approximating $ \int_{t^\ell}^{t^{\ell+1}} b_{t/\eps} \Big( y(t^\ell) + \int_{t^\ell}^t \partial_t y(\sigma) \D \sigma \Big) \D t$ instead of directly $\int_{t^\ell}^{t^{\ell+1}} \partial_t y(t) \D t$. 
For a given quadrature rule, the first expression may generate a better approximation.

\begin{remark}
	As noted in \cite{Chartier2022}, the initial data $v\rk n(0) = \big( \Phi\rk n_0 \big)^{-1} u_0$ may be approximated explicitly to avoid the inversion of  $\Phi\rk n_0$. 
	It can be done such that the initial condition of the micro part $w\rk n(0) = u_0 - \Phi\rk n_0 v\rk n(0)$ becomes of size $\bigO(\eps^{n+1})$ which is enough to preserve the uniform accuracy result. 
\end{remark}

%-------------------------------
\section{Proofs}
\label{sec:proofs}
%-------------------------------

We now present proofs of our theoretical results, in the same order as they are presented. 
Namely, we start with the properties of the micro-macro decomposition, i.e. of the maps $\Phi\rk n$, $A\rk n$ and $\delta\rk n$ as enounced in Theorem~\ref{thm:decomp_mima}. 
We then focus on the well-posedness of the micro-macro problem $(v\rk n, w\rk n)$ from Corollary~\ref{corollary:mima_well_posed}. 
After this, we show the boundedness of the derivatives of the defect $\delta\rk n$, and in turn of $(v\rk n, w\rk n)$.
Finally, we use all this to prove our main result of uniform accuracy.

\subsection{Well-posedness of the micro-macro decomposition}
\label{sec:proof_wellposedness}

Since the fixed-point \eqref{eq:iter_Phi} is based on successive integrations, we shall use the following lemma, which bounds an antiderivative from the integrated function.

\begin{lemma}
    	\label{lemma:integration}
    	Let $\kappa_+ > 0$ and $\psi \in \calE_{\kappa_+}$. Then, for any $\kappa_-$ such that $0 \leq \kappa_- < \kappa_+$, solutions $\varphi$ to the equation
    	\begin{equation*}
    		%\label{eq:integration}
        		\partial_\tau \varphi = \psi - \mean{\psi}
    	\end{equation*}
    	satisfy the inequality
    	\begin{equation*}
        		\calN_{\kappa_-}(\varphi - \mean{\varphi}) \leq c_I(\kappa_+ - \kappa_-) \calN_{\kappa_+}(\psi - \mean{\psi})
    	\end{equation*}
    	with 
    	\begin{equation}
    		\label{eq:cI}
    		c_I(\kappa) = \begin{cases}
    		\max \left\{1, \frac1{c_D} \left( \frac\nu{\kappa e} \right)^\nu \right\} & \text{ if } \nu \neq 0, \\
		\max \left\{1, \frac1{c_D} \right\} & \text{ if } \nu = 0.
		\end{cases}
    	\end{equation} 
\end{lemma}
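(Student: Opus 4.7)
The plan is to split both sides of the equation along the unique sharp-flat decomposition of $\calE_{\kappa_-}$, then handle each component separately. Write $\varphi = \varphi^\qp + \varphi^\dmp$ and $\psi = \psi^\qp + \psi^\dmp$; since $\mean{\psi^\dmp} = 0$, the source on the right is $(\psi^\qp - \mean{\psi^\qp}) + \psi^\dmp$. The quasi-periodic component of $\partial_\tau\varphi^\qp$ has zero average (being the derivative of a continuous quasi-periodic map), and $\partial_\tau\varphi^\dmp$ lies in $\calE^\dmp$ once we pick the right primitive; by uniqueness of the decomposition, the equation decouples into $\partial_\tau\varphi^\qp = \psi^\qp - \mean{\psi^\qp}$ and $\partial_\tau\varphi^\dmp = \psi^\dmp$. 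Note that $\varphi - \mean{\varphi}$ has sharp part $\varphi^\qp - \mean{\varphi^\qp}$ and flat part $\varphi^\dmp$, so it is exactly these two quantities I need to estimate.

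For the sharp part, I would expand in Fourier series: the equation $\partial_\tau\varphi^\qp_{\omega\tau} = \sum_{\alpha\neq 0} e^{i\alpha\cdot\omega\tau} \widehat\psi^\qp_\alpha$ forces $\widehat\varphi^\qp_\alpha = \widehat\psi^\qp_\alpha/(i\alpha\cdot\omega)$ for $\alpha\neq 0$, the mode $\alpha=0$ being free (it is $\mean{\varphi^\qp}$, which is subtracted out anyway). Applying the Diophantine inequality~\eqref{eq:diophantine} gives $1/|\alpha\cdot\omega| \leq |\alpha|^\nu/c_D$, and inserting this into the definition of $\|\cdot\|_{\kappa_-}$ yields
\begin{equation*}
    \|\varphi^\qp - \mean{\varphi^\qp}\|_{\kappa_-}
    \leq \frac1{c_D} \sum_{\alpha\neq 0} |\alpha|^\nu e^{-(\kappa_+ - \kappa_-)|\alpha|} e^{\kappa_+|\alpha|} \bigl|\widehat\psi^\qp_\alpha\bigr|.
\end{equation*}
The key analytic step is then to optimize the scalar function $x \mapsto x^\nu e^{-\kappa x}$ on $[0,\infty)$ (with $\kappa = \kappa_+ - \kappa_-$): for $\nu>0$ its maximum is $(\nu/(\kappa e))^\nu$ attained at $x = \nu/\kappa$, while for $\nu = 0$ the factor is simply $1$. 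Pulling this maximum out of the sum gives $\|\varphi^\qp - \mean{\varphi^\qp}\|_{\kappa_-} \leq \frac1{c_D}(\frac\nu{\kappa e})^\nu \|\psi^\qp - \mean{\psi^\qp}\|_{\kappa_+}$ (resp.\ $\frac1{c_D}\|\psi^\qp - \mean{\psi^\qp}\|_{\kappa_+}$ when $\nu = 0$).

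For the flat part, the unique solution of $\partial_\tau\varphi^\dmp = \psi^\dmp$ that lies in $\Lexp$ is $\varphi^\dmp_\tau = -\int_\tau^\infty \psi^\dmp_\sigma\,\D\sigma$, which is well-defined since $\psi^\dmp$ is exponentially decaying. Estimating pointwise via \eqref{eq:expinequalities},
\begin{equation*}
    |\varphi^\dmp_\tau| \leq \int_\tau^\infty e^{-\sigma} \|\psi^\dmp\|_\Lexp \,\D\sigma = e^{-\tau}\|\psi^\dmp\|_\Lexp,
\end{equation*}
so $\|\varphi^\dmp\|_\Lexp \leq \|\psi^\dmp\|_\Lexp$. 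Adding the sharp and flat bounds together, each coefficient is dominated by the common factor $\max\{1, \frac1{c_D}(\frac\nu{\kappa e})^\nu\} = c_I(\kappa_+-\kappa_-)$ (resp.\ $\max\{1, 1/c_D\}$ when $\nu = 0$), which yields the stated inequality since $\calN_{\kappa_+}(\psi - \mean{\psi}) = \|\psi^\qp - \mean{\psi^\qp}\|_{\kappa_+} + \|\psi^\dmp\|_\Lexp$.

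The main obstacle is the Diophantine/optimization step: the small-divisor loss $|\alpha|^\nu/c_D$ has to be compensated exactly by a geometric gap $e^{-(\kappa_+-\kappa_-)|\alpha|}$, and the sharp constant $(\nu/(\kappa e))^\nu$ comes from the single-variable calculus of $x^\nu e^{-\kappa x}$. The flat part is essentially routine, but taking the maximum with $1$ in the definition of $c_I$ is needed precisely to absorb both contributions into one factor regardless of whether the Diophantine bound or the straightforward decay estimate is the dominant one.
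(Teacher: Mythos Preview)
Your proposal is correct and follows essentially the same approach as the paper's proof: decompose $\psi - \mean{\psi}$ into its sharp and flat parts, integrate each separately (Fourier-mode-by-mode for the sharp part, with the primitive $-\int_\tau^\infty$ for the flat part), apply the Diophantine inequality together with the elementary optimization $\sup_{x>0} x^\nu e^{-\kappa x} = (\nu/(\kappa e))^\nu$, and take the maximum with~$1$ to absorb the flat estimate. The only cosmetic difference is that the paper constructs the solution by direct integration and then reads off its sharp-flat decomposition, whereas you first decouple the equation and then solve each component; the content is identical.
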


For $\varphi\in\calE_0$, $\mean{\partial_\tau \varphi} = 0$. 
This implies that if $\varphi \in \calE_{\kappa_-}$ and $\partial_\tau \varphi \in \calE_{\kappa_+}$, then 
\begin{equation*}
    	\calN_{\kappa_-}(\varphi - \mean{\varphi}) \leq c_I(\kappa_+ - \kappa_-) \calN_{\kappa_+}(\partial_\tau \varphi).
\end{equation*}

We also present some estimates on the nonlinear operator $\Lambda$ occurring in \eqref{eq:def_Phin} and \eqref{eq:def_delta}.

\begin{lemma}[Bounds on $\Lambda$]
	\label{lemma:bound_Lambda}
	Let $0 \leq \kappa \leq \mu$ and $\varphi$, $\widetilde\varphi \in \calE_\kappa$. 
	Let $c \in ]0,1[$. 
	If
	\begin{equation*}
		\calN_\kappa(\varphi - \id) \leq c \text{ and } \calN_\kappa(\widetilde\varphi - \id) \leq c,
	\end{equation*}
	then there exists two constants $N_c \geq 2$ and $L_c \leq N_c/c$ depending on $c$ only, such that
	\begin{align*}
		\calN_\kappa(\Lambda\{\varphi\}) & \leq N_c M, \\
		\calN_\kappa(\Lambda\{\varphi\} - \Lambda\{\widetilde\varphi \}) & \leq L_c M \calN_\kappa(\varphi - \widetilde\varphi).
	\end{align*}
\end{lemma}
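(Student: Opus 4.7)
The strategy is direct: expand $\Lambda\{\varphi\}_\tau = a_\tau \varphi_\tau - \varphi_\tau \mean{\varphi}^{-1} \mean{a\varphi}$ into its factors, bound each using Proposition~\ref{prop:algebra} and inequality \eqref{eq:ineq_norm_average}, and then repeat the same expansion for the Lipschitz estimate. The first preliminary is the invertibility of $\mean{\varphi}$: since $|\mean{\varphi} - \id| = |\mean{\varphi - \id}| \leq \calN_\kappa(\varphi - \id) \leq c < 1$ by \eqref{eq:ineq_norm_average}, the Neumann series gives $|\mean{\varphi}^{-1}| \leq 1/(1-c)$, and the same holds for $\widetilde\varphi$. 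The triangle inequality for $\calN_\kappa$ then yields $\calN_\kappa(\varphi) \leq 1 + c$, and Proposition~\ref{prop:algebra} combined with Assumption~\ref{hyp:analyticity_and_bounds} gives $\calN_\kappa(a\varphi) \leq M(1+c)$, whence $|\mean{a\varphi}| \leq M(1+c)$. I would also note that any constant endomorphism $B \in \End(X)$ lies in $\calE_\kappa$ with $\calN_\kappa(B) = |B|$, which allows multiplications by $\mean{\varphi}^{-1}\mean{a\varphi}$ to be handled via the algebra property.

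For the first bound, the triangle inequality on $\calE_\kappa$ together with Proposition~\ref{prop:algebra} gives
\begin{equation*}
\calN_\kappa(\Lambda\{\varphi\})
\leq \calN_\kappa(a)\calN_\kappa(\varphi) + \calN_\kappa(\varphi)\, |\mean{\varphi}^{-1}|\, |\mean{a\varphi}|
\leq M(1+c) + \frac{M(1+c)^2}{1-c} = \frac{2M(1+c)}{1-c},
\end{equation*}
which suggests $N_c = 2(1+c)/(1-c)$, already $\geq 2$ for $c \in (0,1)$.

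The Lipschitz estimate is the slightly more delicate step. I would use the four-term decomposition
\begin{align*}
\Lambda\{\varphi\} - \Lambda\{\widetilde\varphi\}
&= a(\varphi - \widetilde\varphi) - (\varphi - \widetilde\varphi)\mean{\varphi}^{-1}\mean{a\varphi} \\
&\quad - \widetilde\varphi \bigl(\mean{\varphi}^{-1} - \mean{\widetilde\varphi}^{-1}\bigr)\mean{a\varphi} - \widetilde\varphi\, \mean{\widetilde\varphi}^{-1}\mean{a(\varphi - \widetilde\varphi)},
\end{align*}
combined with the resolvent identity $\mean{\varphi}^{-1} - \mean{\widetilde\varphi}^{-1} = \mean{\varphi}^{-1}(\mean{\widetilde\varphi} - \mean{\varphi})\mean{\widetilde\varphi}^{-1}$. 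Bounding each of the four pieces with the same ingredients as above, they contribute respectively $M$, $M(1+c)/(1-c)$, $M(1+c)^2/(1-c)^2$ and $M(1+c)/(1-c)$ times $\calN_\kappa(\varphi - \widetilde\varphi)$. Their sum collapses into a perfect square $M\bigl(1 + (1+c)/(1-c)\bigr)^2 = 4M/(1-c)^2$, identifying $L_c = 4/(1-c)^2$. To close the proof, one simply enlarges $N_c$ if needed so that the extra constraint $L_c \leq N_c/c$ is also satisfied, e.g.~$N_c = \max\bigl(2(1+c)/(1-c),\ 4c/(1-c)^2\bigr)$ works on all of $(0,1)$. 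The main obstacle is not conceptual but rather careful bookkeeping of the four terms and the resolvent identity; once those are organised correctly, the telescoping into a square and the verification of the constraint on $N_c$ are immediate.
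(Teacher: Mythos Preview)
Your argument is correct. It is worth noting, however, that you prove the lemma in its full generality (with no assumption on $\mean{\varphi}$), whereas the paper's own proof silently uses the closure condition $\mean{\varphi}=\id$ that is imposed elsewhere in the paper. Under that assumption $\Lambda\{\varphi\}_\tau=a_\tau\varphi_\tau-\varphi_\tau\mean{a\varphi}$, the Lipschitz decomposition has only three terms
\[
\Lambda\{\varphi\}-\Lambda\{\widetilde\varphi\}
= a(\varphi-\widetilde\varphi)-\varphi\,\Mean{a(\varphi-\widetilde\varphi)}-(\varphi-\widetilde\varphi)\,\mean{a\widetilde\varphi},
\]
and one obtains the simpler constants $N_c=(2+c)(1+c)$ and $L_c=3+2c$, which satisfy $cL_c\leq N_c$ automatically for all $c\in(0,1)$. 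Your four-term decomposition together with the resolvent identity handles the case $\mean{\varphi}\neq\id$, and the constants you derive, $N_c=2(1+c)/(1-c)$ and $L_c=4/(1-c)^2$, coincide exactly with those the paper quotes (in a remark after the proof) for the non-closure situation. Your additional observation that $N_c$ may have to be enlarged to $\max\!\bigl(2(1+c)/(1-c),\,4c/(1-c)^2\bigr)$ in order to enforce $L_c\leq N_c/c$ for every $c\in(0,1)$ is a refinement the paper's remark omits; without it, the constraint fails for $c>\sqrt{2}-1$. So your route is slightly longer but proves the statement exactly as written, while the paper's shorter argument tacitly relies on the closure condition used throughout the rest of the analysis.
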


The proofs of Lemmas~\ref{lemma:integration} and \ref{lemma:bound_Lambda} are postponed in Appendices~\ref{app:integration_lemma} and \ref{app:bound_Lambda}. \\

We now proceed with the proof of Theorem~\ref{thm:decomp_mima}. 
Fix $n \in \setN$ and consider $0 \leq k \leq n$. 
Owing to Lemmas~\ref{lemma:integration} and \ref{lemma:bound_Lambda}, if $\Phi\rk k$ is in some space $\calE_{\kappa_+}$ such that $\calN_{\kappa_+}(\Phi\rk k - \id) \leq c$, then it possible to bound $\Phi\rk{k+1} - \id$ on the larger space $\calE_{\kappa_-}$ for all $0 \leq \kappa_- < \kappa_+$. 
Here, we proceed by induction, by considering successive $\Phi\rk k$ on spaces $\calE_{\mu_k}$ with
\begin{equation*}
	\mu_k = \left( 1 - \frac{k}{n+1} \right) \mu,
	\qquad\text{s.t.}\qquad
	0 = \mu_{n+1} < \mu_n < \ldots < \mu_1 < \mu_0 = \mu.
\end{equation*}
We show the desired bound on $\calN_{\mu_n}(\Phi\rk n - \id)$, which implies the well-posedness of the $n$-th order change of variable. 
Additionally, we bound the approximate map $A\rk n$ and we determine the size of the defect $\delta\rk n$. 

\subsubsection{Estimates on the near-identity and average maps}

We proceed by induction to show that, for all $0 \leq k \leq n+1$, $\calN_{\mu_k}(\Phi\rk k - \id) \leq c$. 
It is clear that it holds for $k = 0$ since $\Phi\rk0 = \id$. 
Now, for $0 \leq k \leq n$, we assume that $\Phi\rk k \in \calE_{\mu_k}$ and $\calN_{\mu_k}(\Phi\rk k - \id) \leq c$. 
Owing to the identity 
\begin{equation*}
	\partial_\tau \left[\Phi\rk{k+1} - \id\right] = \partial_\tau \Phi\rk{k+1} = \eps \Lambda\{ \Phi\rk k \},
\end{equation*}
as well as Lemma~\ref{lemma:integration} using $\kappa_- = \mu_{k+1}$ and $\kappa_+ = \mu_k$ and noticing that $\kappa_+ - \kappa_- = \mu_n$, we have $\Phi\rk{k+1} \in \calE_{\mu_{k+1}}$ and 
\begin{equation*}
	\calN_{\mu_{k+1}}(\Phi\rk{k+1} - \id) \leq c_I(\mu_n) \calN_{\mu_k}(\eps \Lambda\{ \Phi\rk k \}).
\end{equation*}
By Lemma~\ref{lemma:bound_Lambda} and the induction hypothesis,
\begin{equation*}
	\calN_{\mu_{k+1}}(\Phi\rk{k+1} - \id) \leq c \eps \left( c_I(\mu_n) \frac{N_c}c M \right).
\end{equation*}
Consequently, introducing 
\begin{equation}
	\label{eq:eps_n}
	\eps_n := \frac{c}{c_I(\mu_n) N_c M},
\end{equation}
then for all $\eps\leq\eps_n$,
\begin{equation*}
	\calN_{\mu_{k+1}}(\Phi\rk {k+1} - \id) \leq c \frac\eps{\eps_n} \leq c.
\end{equation*}
In particular for $k = n$, we find
\begin{equation*}
	\| \Phi\rk n - \id \| \leq \calN_0(\Phi\rk n - \id) \leq \calN_{\mu_n}(\Phi\rk n - \id) \leq \frac{c\eps}{\eps_n} \leq c.
\end{equation*}
Moreover, proceeding as in Lemma~\ref{lemma:bound_Lambda}, we obtain
\begin{equation*}
	| A\rk n | = | \mean{a \Phi\rk n} | \leq \calN_{\mu_n}(a \Phi\rk n) \leq \calN_{\mu_n}(a) \calN_{\mu_n}(\Phi\rk n) .
\end{equation*}
For one part $\calN_{\mu_n}(a) \leq M$ since $\mu_n \leq \mu$, and for the other part, 
\begin{equation*}
	\calN_{\mu_n}(\Phi\rk n) \leq \calN_{\mu_n}(\id) + \calN_{\mu_n}(\Phi\rk n - \id) \leq 1 + c.
\end{equation*}
This finally yields 
\begin{equation*}
	| A\rk n | \leq (1+c) M.
\end{equation*}

\subsubsection{Size of the defect}

By definition, for all $0 \leq k \leq n$, 
\begin{equation*}
	\delta\rk k  = \Lambda\{ \Phi\rk{k-1} \} - \Lambda\{ \Phi\rk k \} 
	= \frac1\eps \partial_\tau \big (\Phi\rk k -  \Phi\rk{k+1} \big)
\end{equation*}
and $\delta\rk k \in \calE_{\mu_k}$, with convention $\Lambda\{ \Phi\rk{-1} \} = 0$. 
The Lipschitz property on $\Lambda$ implies that
\begin{equation*}
	\calN_{\mu_k}(\delta\rk k) \leq L_c M \calN_{\mu_k}(\Phi\rk{k-1} - \Phi\rk k).
\end{equation*}
Since $\mean{\Phi\rk{k-1} - \Phi\rk k} = 0$, and thanks to Lemma~\ref{lemma:integration}
\begin{align*}
	\calN_{\mu_k}(\Phi\rk{k-1} - \Phi\rk k) 
	& \leq c_I(\mu_n) \calN_{\mu_{k-1}}(\partial_\tau (\Phi\rk{k-1} - \Phi\rk k)) \\
	& = c_I(\mu_n) \eps \calN_{\mu_{k-1}}(\Lambda\{ \Phi\rk{k-2} \} - \Lambda\{ \Phi\rk{k-1} \}).	
\end{align*}
We recognize the definition $\delta\rk{k-1} = \Lambda\{\Phi\rk{k-2}\} - \Lambda\{\Phi\rk{k-1}\}$, hence
\begin{equation*}
	\calN_{\mu_k}(\delta\rk k) \leq L_c M c_I(\mu_n) \eps \calN_{\mu_{k-1}}(\delta\rk{k-1}).
\end{equation*}
Using the expression of $\eps_n$ \eqref{eq:eps_n} and the bound $c L_c \leq N_c$, we finally obtain
\begin{equation*}
	\calN_{\mu_k}(\delta\rk k) \leq \frac{cL_c}{N_c} \frac\eps{\eps_n} \calN_{\mu_{k-1}}(\delta\rk{k-1})
	\leq \frac\eps{\eps_n} \calN_{\mu_{k-1}}(\delta\rk{k-1}).
\end{equation*}
An immediate induction, and the fact that $\mu_0 = \mu$ leads to
\begin{equation*}
	\calN_{\mu_n}(\delta\rk k) \leq \left( \frac\eps{\eps_n} \right)^k \calN_\mu(\delta\rk0).
\end{equation*}
Now $\delta\rk0 = - \Lambda\{ \id \} = - (a - \mean{a})$, hence $\calN_\mu(\delta\rk0) \leq \calN_\mu(a) \leq M$, and as such,
\begin{equation*}
	\calN_{\mu_n}(\delta\rk n) \leq M \left( \frac\eps{\eps_n} \right)^n.
\end{equation*}
Notice that a direct by-product of this proof is the bound between two consecutive near-identity maps   
\begin{equation}
	\label{eq:estim_diff_Phi_new}
	\calN_{\mu_{n+1}}(\Phi\rk n - \Phi\rk{n+1}) 
	\leq  c_I(\mu_n) \eps \calN_{\mu_n}(\delta\rk n) 
	\leq c_I(\mu_n) \eps M \left( \frac\eps{\eps_n} \right)^n 
	\leq \frac{c}{N_c} \left( \frac\eps{\eps_n} \right)^{n+1}.
\end{equation}
A direct integration yields 
\begin{equation*}
	\eps \int_0^\tau \delta\rk n_\sigma \D\sigma = \Phi\rk n_\tau - \Phi\rk n_0 - \Phi\rk{n+1}_\tau + \Phi\rk{n+1}_0,
\end{equation*}
from which \eqref{eq:estim_diff_Phi_new} may be plugged to find
\begin{equation*}
	\sup_{\tau \geq 0} \left| \eps \int_0^\tau \delta\rk n_\sigma \D\sigma \right| 
	\leq 2 \| \Phi\rk n - \Phi\rk{n+1} \| 
	\leq 2 \calN_{\mu_{n+1}}(\Phi\rk n - \Phi\rk{n+1}) 
	\leq \frac{2c}{N_c} \left( \frac\eps{\eps_n} \right)^{n+1}.
\end{equation*}
Thanks to the bounds $N_c \geq 2$ and $c \leq 1$, we finally obtain the desired result.

\subsection{Well-posedness of the micro-macro problem}
\label{sec:proof_corollary_mima}

\begin{proof}[Proof of Corollary~\ref{corollary:mima_well_posed}]
	By boundedness of $A\rk n$ due to the estimates \eqref{prop:asympt:eq:A}, the macro part $v\rk n$ is well-defined. 
	Writing $v\rk n(t) = v\rk n(0) + \int_0^t A\rk n v\rk n(t') \D t'$, a direct application of Gronwall's lemma and the estimate \eqref{eq:bound_inv_Phi} on $(\Phi \rk n_0)^{-1}$, we obtain
	\begin{equation}
		\label{eq:bound_vn}
		|v\rk n(t) | \leq e^{(1+c)t M} \frac{|u_0|}{1 - c}.
	\end{equation}
	For the micro part, we use the integral formulation to obtain 
	\begin{equation*}
		| w\rk n(t) | \leq \|a\| \int_0^t | w\rk n(t') | \D t' + \left| \int_0^t \delta \rk n_{t'/\eps} v\rk n(t') \D t' \right|.
	\end{equation*}
	A new application of Gronwall's lemma and the fact that  $w\rk n(0) = 0$ generates 
	\begin{equation*}
		| w\rk n(t) | \leq e^{t\| a \|} \sup_{t' \in [0,t]} \left| \int_0^{t'} \delta\rk n _{t''/\eps} v\rk n(t'') \D t'' \right|.
	\end{equation*}
	Using again the integral expression of $v\rk n$, we may integrate by parts to obtain 
	\begin{align*}
		\int_0^t \delta\rk n_{t'/\eps} v\rk n(t') \D t'
		= \left( \int_0^t \delta\rk n_{t'/\eps} \D t' \right) v\rk n(0)
		- \int_0^t \left( \int_0^{t'} \delta\rk n_{t''/\eps} \D t'' \right) A\rk n v\rk n(t') \D t' \\ 
		+ \left( \int_0^t \delta\rk n_{t'/\eps} \D t' \right) \left( \int_0^t A\rk n v\rk n(t') \D t' \right).
	\end{align*}
	Applying a change of variable $\sigma \leftarrow t'/\eps$ or $\sigma' \leftarrow t'' / \eps$ in the integrals of $\delta\rk n$ followed by a direct injection of the estimates from Theorem~\ref{thm:decomp_mima} and of the estimate \eqref{eq:bound_vn}, we obtain
	\begin{align*}
		\Big| \int_0^t \delta\rk n_{t'/\eps} v\rk n(t') \D t' \Big| 
		& = \left( \frac\eps{\eps_n} \right)^{n+1} \left( 1 + 2(1+c) M \int_0^t e^{(1+c)t' M} \D t' \right) \frac{|u_0|}{1-c} \\ 
		& \leq (2 e^{(1+c)t M} - 1) \left( \frac\eps{\eps_n} \right)^{n+1} \frac{|u_0|}{1-c}.
	\end{align*}
	Finally,
	\begin{equation*}	
		| w\rk n (t) | \leq \left( \frac\eps{\eps_n} \right)^{n+1} e^{t\|a\|} \left( 2e^{(1+c)t M} - 1\right) \frac{|u_0|}{1-c}. 	
	\end{equation*}
	Thus, for any $T > 0$, the micro-macro problem~\eqref{eq:mima} is uniformly bounded on $[0,T]$ with $v\rk n$ of size $\bigO(1)$ and $w\rk n$ of size $\bigO(\eps^{n+1})$. 
\end{proof}

\subsection{Well-posedness of the derivatives} \label{sec:derivatives}

Similar to Lemma \ref{lemma:bound_Lambda}, we start by presenting some estimates on the derivatives of the nonlinear operator $\Lambda$.

\begin{lemma}[Bounds on derivatives of $\Lambda$]
	\label{lemma:bound_der_Lambda}
	Under the assumptions of Lemma~\ref{lemma:bound_Lambda} and Assumption \ref{hyp:derivatives}, there exists a constant $N^{(q)}_c$ (depending on $q$, $c$ and $C_a^{(q)}$) such that
	\begin{equation*}
		\sup_{0 \leq p \leq q} \| \partial_\tau^p \Lambda\{ \varphi \} \|  
		\leq N^{(q)}_c M \sup_{0 \leq p \leq q} \| \partial_\tau^p \varphi \|.
	\end{equation*}
	Moreover, if there exists $c^{(q)} > 0$ such that
	\begin{equation*}
		\sup_{0 \leq p \leq q} \| \partial_\tau^p \varphi \| \leq c^{(q)} 
		\text{ and } 
		\sup_{0 \leq p \leq q} \| \partial_\tau^p \tilde\varphi \| \leq c^{(q)},
	\end{equation*}	
	then there exists a constant $L^{(q)}_c$  such that
	\begin{equation*}
		\sup_{0 \leq p \leq q} \| \partial_\tau^p \big( \Lambda\{ \varphi \} - \Lambda\{ \widetilde\varphi \} \big) \|  
		\leq L^{(q)}_c M \sup_{0 \leq p \leq q} \| \partial_\tau^p \big( \varphi - \widetilde\varphi \big) \|.
	\end{equation*}	
\end{lemma}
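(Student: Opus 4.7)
My plan is to adapt the proof of Lemma~\ref{lemma:bound_Lambda} by applying Leibniz's formula, exploiting the structural observation that in $\Lambda\{\varphi\}_\tau = a_\tau \varphi_\tau - \varphi_\tau \mean{\varphi}^{-1}\mean{a\varphi}$ the factor $\mean{\varphi}^{-1}\mean{a\varphi}$ is a constant with respect to $\tau$. Therefore $\partial_\tau^p$ acts only on the product $a_\tau\varphi_\tau$ in the first term and on the single remaining $\varphi_\tau$ factor in the second, yielding
\begin{equation*}
\partial_\tau^p \Lambda\{\varphi\}_\tau
= \sum_{k=0}^p \binom{p}{k} (\partial_\tau^k a_\tau)(\partial_\tau^{p-k}\varphi_\tau)
- (\partial_\tau^p \varphi_\tau)\,\mean{\varphi}^{-1}\mean{a\varphi}.
\end{equation*}
Taking induced norms and applying Assumption~\ref{hyp:derivatives} to bound each $\|\partial_\tau^k a\|$ by $C_a^{(q)} M$, together with $|\mean{\varphi}^{-1}| \leq 1/(1-c)$ (obtained via Neumann series from $|\mean{\varphi} - \id| \leq \|\varphi - \id\| \leq \calN_\kappa(\varphi - \id) \leq c$) and $|\mean{a\varphi}| \leq \|a\|\|\varphi\| \leq M(1+c)$, every piece is absorbed into $M\sup_{0\leq p' \leq q}\|\partial_\tau^{p'}\varphi\|$, with the binomial sum controlled by $2^p \leq 2^q$. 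This gives the first bound with a constant $N_c^{(q)}$ depending on $q$, $c$ and $C_a^{(q)}$.

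For the Lipschitz estimate, my key preliminary step is an algebraic decomposition that makes $\varphi - \widetilde\varphi$ appear explicitly as a factor in every term:
\begin{align*}
\Lambda\{\varphi\} - \Lambda\{\widetilde\varphi\}
=\ & a(\varphi-\widetilde\varphi) - (\varphi-\widetilde\varphi)\mean{\varphi}^{-1}\mean{a\varphi} \\
& - \widetilde\varphi\,\mean{\varphi}^{-1}\mean{a(\varphi-\widetilde\varphi)} + \widetilde\varphi\,\mean{\varphi}^{-1}\mean{\varphi-\widetilde\varphi}\mean{\widetilde\varphi}^{-1}\mean{a\widetilde\varphi},
\end{align*}
obtained by inserting the telescoping identity $\mean{\varphi}^{-1} - \mean{\widetilde\varphi}^{-1} = \mean{\varphi}^{-1}(\mean{\widetilde\varphi} - \mean{\varphi})\mean{\widetilde\varphi}^{-1}$. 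Since the averaged factors are once again $\tau$-independent, applying $\partial_\tau^p$ produces at most one Leibniz sum (in the first term, on $a$ and $\varphi - \widetilde\varphi$) and otherwise a single derivative on $\widetilde\varphi$ or on $\varphi - \widetilde\varphi$. I would then bound each resulting piece using Assumption~\ref{hyp:derivatives}, the invertibility of both $\mean{\varphi}$ and $\mean{\widetilde\varphi}$, and the uniform hypotheses $\sup_{0\leq p\leq q}\|\partial_\tau^p\varphi\|, \|\partial_\tau^p\widetilde\varphi\| \leq c^{(q)}$. Whenever a time-average involving $\varphi - \widetilde\varphi$ appears, the trivial inequality $\|\varphi-\widetilde\varphi\| \leq \sup_{0\leq p\leq q}\|\partial_\tau^p(\varphi-\widetilde\varphi)\|$ routes the bound back to the right-hand side of the claimed inequality.

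The only real obstacle is the combinatorial bookkeeping of the Leibniz expansions and the careful tracking of constants; no new analytical tools beyond those from Lemma~\ref{lemma:bound_Lambda} and Assumption~\ref{hyp:derivatives} are required. Summing the four bounded contributions yields an admissible constant $L_c^{(q)}$ depending on $q$, $c$, $c^{(q)}$ and $C_a^{(q)}$, completing the proof.
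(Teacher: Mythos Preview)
Your argument is correct. The first estimate is handled exactly as in the paper: Leibniz on $a\varphi$, the bound $\sum_{k}\binom{p}{k}\leq 2^q$, and control of the constant factor $\mean{a\varphi}$ by $M(1+c)$. The one substantive difference is that the paper silently invokes the standard-averaging closure $\mean{\varphi}=\mean{\widetilde\varphi}=\id$ (as in its proof of Lemma~\ref{lemma:bound_Lambda}), so that $\Lambda\{\varphi\}=a\varphi-\varphi\mean{a\varphi}$ and the Lipschitz decomposition has only three terms,
\[
\Lambda\{\varphi\}-\Lambda\{\widetilde\varphi\}
= a(\varphi-\widetilde\varphi) - \varphi\Mean{a(\varphi-\widetilde\varphi)} - (\varphi-\widetilde\varphi)\mean{a\widetilde\varphi},
\]
yielding the explicit constants $N_c^{(q)}=2^qC_a^{(q)}+1+c$ and $L_c^{(q)}=N_c^{(q)}+c^{(q)}$. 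You instead keep the general $\mean{\varphi}^{-1}$ and absorb its contribution via $|\mean{\varphi}^{-1}|\leq 1/(1-c)$, which forces the extra resolvent identity $\mean{\varphi}^{-1}-\mean{\widetilde\varphi}^{-1}=\mean{\varphi}^{-1}\mean{\widetilde\varphi-\varphi}\mean{\widetilde\varphi}^{-1}$ and a fourth term in the telescoping. Your route is therefore slightly more general (it covers the case discussed in the paper's remark after Lemma~\ref{lemma:bound_Lambda}, without closure), at the cost of larger constants and a bit more bookkeeping; the paper's route is shorter and gives cleaner explicit values because the closure kills the inverse altogether.
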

The proof of this lemma is postponed in Appendix~\ref{app:bound_der_Lambda}. \\

We now want to establish that there exists $C_\delta^{(q)} > 0$ such that
\begin{equation*}
	\label{bound_der_delta}
	\sup_{0 \leq p \leq q} \| \partial_\tau^p \delta\rk n \| \leq C_\delta^{(q)} M \left( \frac\eps{\eps_n}\right)^n.
\end{equation*}
We have already estimated $\| \delta\rk n \|$ in \eqref{eq:bound_delta}. 
The proof for larger values of $p$ follows the same lines, using in addition Lemma~\ref{lemma:bound_der_Lambda}. 

We first need to bound $\| \partial_\tau^p \Phi\rk k \|$ for all $0 \leq p \leq q$ and for all $0 \leq k \leq n$. 
The bound for $p = 0$ is clear from \eqref{prop:asympt:eq:Phi}. 
Under the assumption that $a \in C^q$, it is easy to see from the definition of $\Lambda$ and the definition of $\Phi\rk k$ that $\Phi\rk k \in C^q$ for all $0 \leq k \leq n$. 
Owing to the identity \eqref{eq:def_Phin} and Lemma~\ref{lemma:bound_der_Lambda}, we have, for $0 < p' \leq q$,
\begin{equation*}
	\| \partial_\tau^{p'} \Phi\rk{k} \| = \eps \| \partial_\tau^{p'-1} \Lambda\{ \Phi\rk{k-1} \} \| 
	\leq \eps \sup_{0 \leq p \leq q} \| \partial_\tau^p \Lambda\{ \Phi\rk{k-1} \} \| 
	\leq \eps N_c^{(q)} M \sup_{0 \leq p \leq q} \| \partial_\tau^p \Phi\rk{k-1} \|.
\end{equation*}
Consequently, since $\Phi\rk0 = \id$, a straightforward induction gives that there exists $c^{(q)}$ such that, for all $0 \leq k \leq n$, 
\begin{equation*}
\sup_{0 \leq p \leq q} \| \partial_\tau^p \Phi\rk k \| \leq c^{(q)}.
\end{equation*}

Let us now turn to the estimation of the defect $\delta\rk k = \Lambda\{ \Phi\rk{k-1} \} - \Lambda\{ \Phi\rk k \}$ for all $0 \leq k \leq n$. 
By the previous bound on $\| \partial_\tau^p \Phi\rk k \|$ and Lemma~\ref{lemma:bound_der_Lambda}, 
\begin{equation*}
	\sup_{0 \leq p \leq q} \| \partial_\tau^p \delta\rk k \| \leq L_c^{(q)} M \sup_{0 \leq p \leq q} \| \partial_\tau^p (\Phi\rk{k-1} - \Phi\rk k) \|.
\end{equation*}
For $0 < p \leq q$, according to \eqref{eq:def_Phin},
\begin{equation*}
	\| \partial_\tau^p (\Phi\rk{k-1} - \Phi\rk k) \| 
	= \eps  \| \partial_\tau^{p-1} (\Lambda\{ \Phi\rk{k-2} \} - \Lambda\{ \Phi\rk{k-1} \}) \| 
	= \eps  \| \partial_\tau^{p-1} \delta\rk{k-1} \|.
\end{equation*}
Thus, we obtain
\begin{equation*}
	\sup_{0 \leq p \leq q} \| \partial_\tau^p \delta\rk k \| 
	\leq L_c^{(q)} M \eps \sup_{0 \leq p \leq q} \| \partial_\tau^p \delta\rk{k-1} \|
\end{equation*}
and an immediate induction leads to
\begin{equation*}
	\sup_{0 \leq p \leq q} \| \partial_\tau^p \delta\rk n\| 
	\leq \left( L_c^{(q)} M \eps \right)^n \sup_{0 \leq p \leq q} \| \partial_\tau^p \delta\rk0 \|.
\end{equation*}
Since $\delta\rk0 = - \Lambda\{ \id \} = - (a - \mean{a})$, hence 
\begin{equation*}
\sup_{0 \leq p\leq q} \| \partial_\tau^p \delta\rk0 \| \leq \sup_{0 \leq p \leq q} \| \partial_\tau^p a \| \leq C_a^{(q)} M,
\end{equation*}
we obtain the desired bound \eqref{bound_der_delta} denoting $C_\delta^{(q)} = C_a^{(q)} (\eps_n L_c^{(q)} M)^n$. 
It concludes the proof of Theorem~\ref{thm:bound_derivatives_delta}.
 
\subsection{Uniform accuracy}

\begin{proof}[Proof of Corollary~\ref{corollary:mima_bounded_derivatives}]
	For the derivatives of $v\rk n$, we perform an induction which is initialized using that $v\rk n$ is of size $\bigO(1)$ according to Corollary~\ref{corollary:mima_well_posed}. 
	Simply using 
	\begin{equation*}
		\partial_t^{p+1} v\rk n(t) = A\rk n \partial_t^p v\rk n(t), 
	\end{equation*}
	and the estimate \eqref{prop:asympt:eq:A} on $A\rk n$, we obtain, for all $p = 0, \dots, \min(n,q)+1$,
	\begin{equation}
		\label{eq:induction_derivatives_mima}
		\partial_t^p v\rk n(t) = \bigO(1).
	\end{equation}

	For the derivatives of $w\rk n$, the induction is initialized by Corollary~\ref{corollary:mima_well_posed} from which $w\rk n (t) = \bigO(\eps^{n+1})$. 
	Then, we write 
	\begin{align*}
		\frac{\partial_t^{p+1} w\rk n(t)}{\eps^{n+1-(p+1)}} 
		= \eps \sum_{p' = 0}^p \binom{p}{p'} & \partial_\tau^{p-p'} a_{t/\eps} \frac{\partial_t^{p'} w\rk n(t)}{\eps^{n+1-p'}} \\
		& - \sum_{p'=0}^p \binom{p}{p'} \eps^{p-p'} \frac{\partial_\tau^{p'} \delta\rk n_{t/\eps}}{\eps^n} \partial_t^{p-p'} v\rk n(t).
	\end{align*}
	We bound the derivatives of $a_\tau$ thanks to Assumption~\ref{hyp:derivatives}, the derivatives of $\delta\rk n$ by $\bigO(\eps^n)$ thanks to Theorem~\ref{thm:bound_derivatives_delta} and the derivatives of $v\rk n$ using \eqref{eq:induction_derivatives_mima}. 
	Thus, by induction hypothesis, $\partial_t^{p'} w\rk n(t) = \bigO(\eps^{n+1 - p'})$ for all $p' = 0, \dots, p$, every term of the sums of the right-hand side is uniformly bounded w.r.t. $\eps$, and therefore the sum is of size $\bigO(1)$, for all $p = 0, \dots, \min(n,q)+1$. 
	This concludes the induction.
\end{proof}

\begin{proof}[Proof of Corollary~\ref{corollary:mima_ua}]
	The micro-macro problem can be written as 
	\begin{equation*}
		\partial_t y(t) = f(t/\eps, y(t)),
	\end{equation*}
	with $y = (v\rk n, w\rk n)$ and $f(\tau, y(t)) = \begin{pmatrix} A\rk n & 0 \\ -\delta\rk n_\tau & a_\tau \end{pmatrix} y(t)$. 
	We use a one-step scheme of non-stiff order $s$, written in the standard form
	\begin{equation*}
		y^{\ell+1} = y^\ell + \Delta t \mathcal{F}(t^\ell/\eps, y^\ell, \Delta t)
	\end{equation*}
	such that the $s$-th order derivative of $\mathcal{F}$ with respect to the third variable is of the same order as $\partial_t^{s+1} y$ (as it is for instance the case for standard and integral Runge-Kutta schemes). 
	It is well-known that the local consistency error $e_\ell = y(t^{\ell+1}) - y(t^\ell) - \Delta t \mathcal{F}(t^\ell, y(t^\ell), \Delta t)$ is bounded by
	\begin{equation*}
		| e_\ell | \leq \left( \frac1{(s+1)!} \sup_{t \in [t^\ell, t^{\ell+1}]} | \partial_t^{s+1} y(t) | + \frac1{s!} \sup_{h \in [0, \Delta t]} | \partial_h^s \mathcal{F}(t^\ell/\eps, y(t^\ell), h) | \right) \Delta t^{s+1}.
	\end{equation*}
	Thus, by Corollary~\ref{corollary:mima_bounded_derivatives}, the scheme retains its usual order $s$ as soon as $s \leq \min(n,q)$. 
\end{proof}

\section{Numerical experiments}
\label{sec:num}

In this section, we present some numerical experiments to illustrate the previous results. 
After a brief introduction of the different schemes we use, we test our strategy on a simple scalar problem for which we know the exact solution. 
This is used to illustrate the different components of the method, e.g. the size of the micro part $w\rk n$ and of its derivatives, and to validate our result of uniform accuracy. 
We then apply the micro-macro method to an approximation of the Bloch equations.

\subsection{Numerical schemes}
\label{sec:num_scheme}

We recall that we uniformly discretize the time interval $[0,T]$ defining $t^\ell = \ell \Delta t$ for $\ell = 0, \dots, L$ where $L+1$ is the number of discretization points and $\Delta t = \frac TL$ the time step. 
In the numerical experiments, we shall use the following numerical schemes  associated to \eqref{eq:mima}:
\begin{itemize}
\item[--] {\it Explicit Euler (EE) scheme:}
\begin{equation*}
	\left \{
	\begin{aligned}
	v^{\ell+1} &= v^\ell + \Delta t  A v^\ell, \\[1mm]
	w^{\ell+1} & = w^\ell + \Delta t (a^\ell w^\ell - \delta^\ell v^\ell),
	\end{aligned}
	\right. 
\end{equation*}
\item[--] {\it Integral Explicit Euler (EEint) scheme:}
\begin{equation*}
	\left \{
	\begin{aligned}
	v^{\ell+1} & = v^\ell +\Delta t  A v^\ell, \\[1mm]
	w^{\ell+1} & = \textstyle w^\ell +  \big( \int_{t^\ell}^{t^{\ell+1}} a_{\sigma/ \eps} \D\sigma \big) w^\ell - \big( \int_{t^\ell}^{t^{\ell+1}} \delta\rk n_{\sigma/\eps}  \D\sigma \big)  v^\ell,
	\end{aligned}
	\right. 
\end{equation*}
\item[--] {\it Runge-Kutta of order 2 (RK2) scheme:}\\
\begin{equation*}
  	\left\{ 
	\begin{aligned}
    	\widetilde{v}^{\ell+1/2} & = \textstyle v^\ell + \frac{\Delta t}2 A v^\ell , \\
    	\widetilde{w}^{\ell+1/2} & = \textstyle w^\ell + \frac{\Delta t}2 (a^\ell w^\ell - \delta^\ell v^\ell), \\[2mm]
    	v^{\ell+1} & = v^\ell + \Delta t A \widetilde{v}^{\ell+1/2} , \\
    	w^{\ell+1} & = w^\ell + \Delta t \left( a^{\ell+1/2} \widetilde{w}^{\ell+1/2} - \delta^{\ell+1/2} \widetilde{v}^{\ell+1/2} \right), \\
 	 \end{aligned} 
	 \right.
\end{equation*}
\item[--] {\it Integral Runge-Kutta (RK2int) scheme:}\\
\begin{equation*}
  	\left\{ 
	\begin{aligned}
    	\widetilde{v}^{\ell+1/2} & = \textstyle v^\ell + \frac{\Delta t}2 A v^\ell, \\
    	\widetilde{w}^{\ell+1/2} & = \textstyle w^\ell + \big( \int_{t^\ell}^{t^\ell + \Delta t/2} a_{\sigma/\eps} \D\sigma \big) w^\ell 
	- \big( \int_{t^\ell}^{t^\ell + \Delta t/2} \delta\rk n_{\sigma/\eps} \D\sigma \big) v^\ell, \\[2mm]
    	v^{\ell+1} & = v^\ell + \Delta t A \widetilde{v}^{\ell+1/2}, \\
    	w^{\ell+1} & = \textstyle w^\ell + \big( \int_{t^\ell}^{t^{\ell + 1}} a_{\sigma/\eps} \D\sigma \big) \widetilde{w}^{\ell+1/2} 
	- \big( \int_{t^\ell}^{t^{\ell + 1}} \delta\rk n_{\sigma/\eps} \D\sigma \big) \widetilde{v}^{\ell+1/2},
  	\end{aligned} 
	\right.
\end{equation*}
\end{itemize}
where $(v^\ell, w^\ell)$ and $(v^{\ell+1}, w^{\ell+1})$ are respectively approximations of $\big( v\rk n(t^\ell), w\rk n(t^\ell) \big)$ and $\big(v\rk n(t^{\ell+1}), w\rk n(t^{\ell+1}) \big)$ and $A$, $a^\ell$, $a^{\ell+1/2}$, $a^{\ell+1}$, $\delta^\ell$, $\delta^{\ell+1/2}$, $\delta^{\ell+1}$ denote respectively $A\rk n$, $a_{t^\ell/\eps}$, $a_{(t^\ell + \Delta t/2)/\eps}$, $a_{t^{\ell+1}/\eps}$, $\delta\rk n_{t^\ell/\eps}$, $\delta\rk n_{(t^\ell+\Delta t/2)/\eps}$, $\delta\rk n_{t^{\ell+1}/\eps}$.

\subsection{A scalar test problem}

\subsubsection{Presentation of the problem}

In this part, we study from  a numerical point of view the problem \eqref{eq:ode_u} with a ``sharp-flat'' scalar map defined by $a_{\tau} = a^\qp_{\omega \tau} + a^\dmp_\tau$. 
We choose it such that the equation roughly behaves as the applicative problem considered in the next section with the advantage of knowing an exact solution. 
More precisely, we consider a quasi-periodic part of the form
\begin{equation*}
	a^\qp_{\omega \tau} = -1 + b^\qp_{\omega \tau}, 
	\text{ with } 
	b^\qp_{\omega \tau} = \frac1r \sum_{p=1}^r \cos(\omega_p \tau),
\end{equation*}
and an exponentially decreasing part of the form
\begin{equation*}
	a^\dmp_\tau = \gamma e^{-\tau},
\end{equation*}
where $\gamma$ is a given constant. 
In this case, the exact solution of \eqref{eq:ode_u} is given by
\begin{equation*}
	%\label{eq:toy_pb_exact}
	u^\eps(t) = u_0 e^{-t + \eps \big( B^\qp_{\omega t/\eps} + a^\dmp_0 - a^\dmp_{t/\eps} \big)},   
	\text{ with } 
	B^\qp_{\omega \tau} = \int_0^\tau b^\qp_{\omega \sigma} \D\sigma 
	= \frac1r \sum_{p=1}^r \frac{\sin(\omega_p \tau)}{\omega_p},
\end{equation*}
and it tends, as $\eps$ goes to $0$, to
\begin{equation*}
	%\label{eq:toy_pb_lim}
	u^{\rm lim}(t) = u_0 e^{-t}.
\end{equation*}

In the sequel, we consider two choices of frequencies:
\begin{itemize}
\item Mono-frequency case (1F): $r = 1$ with $\omega_1 = \pi$,
\item Multi-frequency case (3F): $r = 3$ with $\omega_1 = 1$, $\omega_2 = \pi$ and $\omega_3 = \sqrt5 \pi$.
\end{itemize}
We also take the following data : $u_0 = 1$, $T = 10$. 

The time evolution of the solution is presented in Fig.~\ref{Fig:toy_sol_ex} for the problem with no exponential decay ($\gamma = 0$), where the plot is restricted to short times ($t \in [0,2]$) and where $\eps$ is fairly large, for visualization purposes. 
In both cases 1F (left) and 3F (right), the solution oscillates around the limit behavior $u^{\rm lim}$ represented by a solid black line.
These oscillations are of small amplitude and seem essentially proportional to $\eps$, and while their quasi-periodic nature makes them seem chaotic in the case 3F, the period $2\pi \eps/\omega_1 = 2 \eps$ appears clearly in the case 1F.

\begin{figure}[h!]
\begin{center}
\includegraphics[scale=0.18, keepaspectratio=true]{./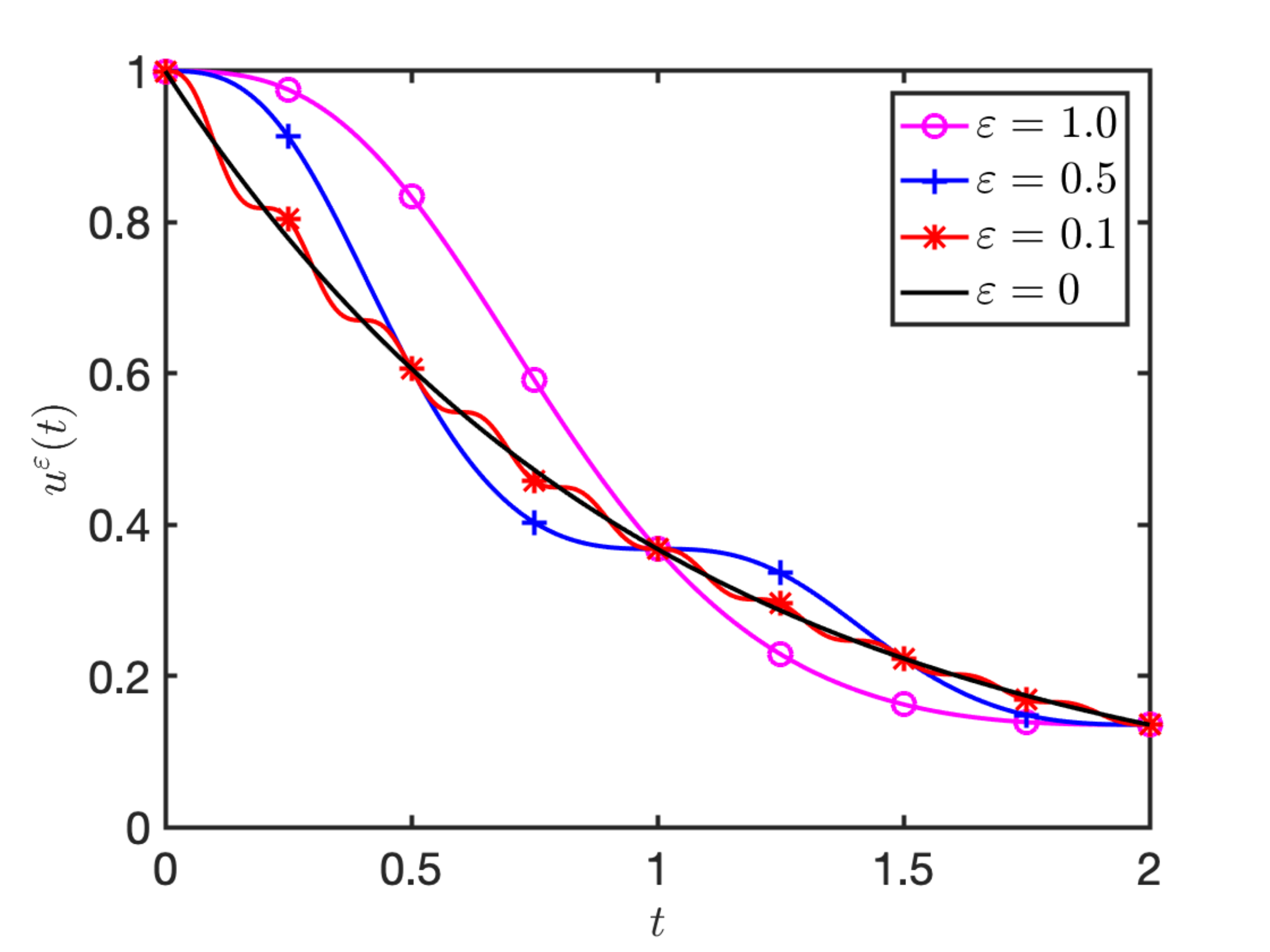}\hspace{0.5cm}
\includegraphics[scale=0.18, keepaspectratio=true]{./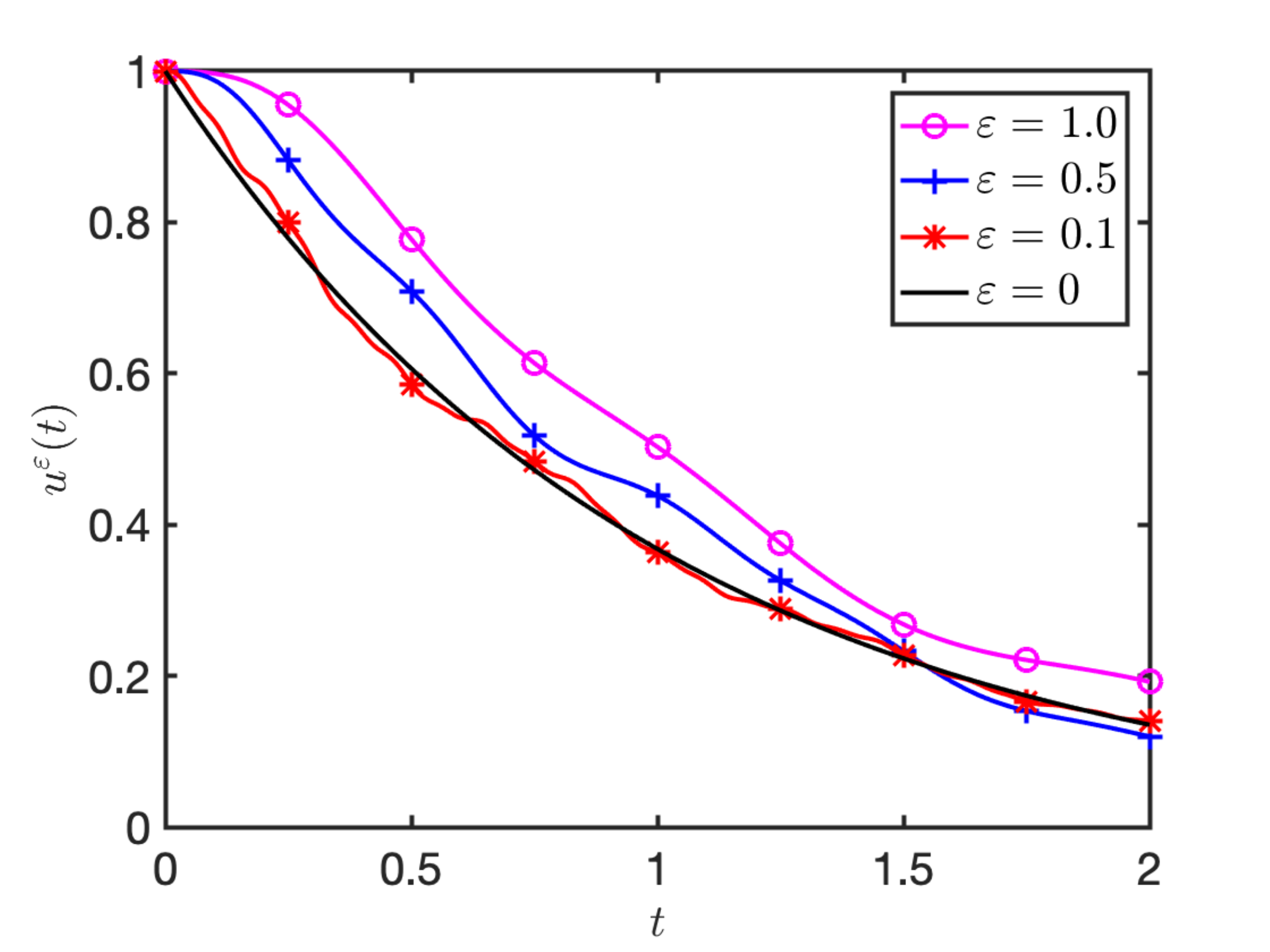}
\end{center}
\caption{Case 1F (left) and case 3F (right); time evolution of the exact solution $u^\eps$ for various $\eps$ and of the exact limit solution $u^{\rm lim}$.}
\label{Fig:toy_sol_ex}
\end{figure}

\subsubsection{Micro-macro problems}

Now, we write explicitly the different terms occurring in the micro-macro problem \eqref{eq:mima}  for $n = 1$ and $n = 2$, using the iterative relations \eqref{eq:iter_Phi} and the defect expression \eqref{eq:def_delta}.

\paragraph{Micro-Macro problem of order $1$:\\}

Since $A\rk0 = -1$ and $\Lambda\{ \Phi\rk0 \}_\tau = b^\qp_{\omega \tau} + a^\dmp_\tau$, straightforward computations yield 
\begin{equation*}
	\Phi\rk1_\tau = 1 + \eps C_\tau\rk1,
	\hspace{1cm} 	
	A\rk1 = -1
	\hspace{0.5cm} \text{ and } \hspace{0.5cm} 
 	\delta\rk1_\tau = - \eps (b^\qp_{\omega \tau} + a^\dmp_\tau) C_\tau\rk1
\end{equation*}
where we introduced
\begin{equation*}
	C_\tau\rk1 = B^\qp_{\omega \tau} - a^\dmp_\tau.
\end{equation*}
Consequently, at the first order, the solution is decomposed as $\Phi\rk1_{t/\eps} v\rk1(t) + w\rk1(t)$ where the micro-macro variables $(v\rk1, w\rk1)$ are solutions to the following problem
\begin{equation*}
	%\label{eq:toy_pb_order1}
	\left\{
	\begin{aligned}
 	\partial_t v\rk1(t) & =  - v\rk1(t), && v\rk1(0) = \frac{u_0}{1 + \eps C\rk1_0}, \\[1mm]
	\partial_t w\rk1(t) & =   a_\tau w\rk1(t) + \eps (b^\qp_{\omega \tau} + a^\dmp_\tau) C\rk1_\tau v\rk1(t), && w\rk1(0) = 0.
 	\end{aligned}
	\right. 
\end{equation*}

\paragraph{Micro-Macro problem of order $2$:\\}

Going on with the iterative process, further computations give
\begin{equation*}
	\Phi\rk2_\tau = \Phi\rk1_\tau + \eps^2 C\rk2_\tau,
	\hspace{1cm}
	A\rk2 = -1,
	\hspace{0.5cm} \text{ and }\hspace{0.5cm} 
	\delta\rk2_\tau = - \eps^2 (b^\qp_{\omega \tau} + a^\dmp_\tau) C\rk2_\tau
\end{equation*}
where $C\rk2_\tau = {C\rk2_{\omega \tau}}^\qp + {C\rk2_\tau}^\dmp$ is given by 
\begin{align*}
	{C\rk2_{\omega \tau}}^\qp = \frac1{r^2} \Bigg( &\sum_{p = 1}^r \frac{\sin^2(\omega_p \tau) -1/2}{2\omega_p^2} \\
	& +  \sum_{p_1 = 1}^r \sum_{\underset{p_2 \neq p_1}{p_2 = 1}}^r  \frac{\omega_{p_1} \sin(\omega_{p_1} \tau)\sin(\omega_{p_2} \tau) 
	+ \omega_{p_2} \cos(\omega_{p_1} \tau) \cos(\omega_{p_2} \tau)}{w_{p_2} (w_{p_1}^2 - w_{p_2}^2)} \Bigg)
\end{align*}
and
\begin{equation*}
	{C\rk2_\tau}^\dmp = - a^\dmp_\tau B^\qp_{\omega \tau} + \frac12 (a^\dmp_\tau)^2.
\end{equation*}
Consequently, at the second order, the solution is decomposed as $\Phi\rk2_{t/\eps} v\rk2(t) + w\rk2(t)$ where the micro-macro variables $(v\rk2, w\rk 2)$ are solutions to the following problem
\begin{equation}
	\label{eq:toy_pb_order2}
	\left\{
	\begin{aligned}
 	\partial_t v\rk2(t) & =  - v\rk2(t), && v\rk2(0) = \frac{u_0}{1 + \eps C\rk1_0 + \eps^2 C\rk2_0},  \\
	\partial_t w\rk2(t) & =  a_\tau w\rk2(t) + \eps^2 (b^\qp_{\omega \tau} + a^\dmp_\tau) C\rk2_\tau v\rk2(t), && w\rk2(0) = 0.
 	\end{aligned}
	\right. 
\end{equation}
In this specific case, $A\rk2$ is exactly equal to $\mean{a}$ but this is not the case in general. 
It gives a macro variable $v\rk2$ that differs from the limit solution $u^{\rm lim}$ only via the perturbation in the initial data $\big( \Phi\rk2_0 \big)^{-1}(u_0)$. 
Concerning the equation governing the micro variable $w\rk2$, we clearly observe that the defect $\delta\rk2$ is of size $\bigO(\eps^2)$.\\

In Figs. \ref{Fig:toy_decomp_caseA} and  \ref{Fig:toy_decomp_caseB}, we plot the micro-macro quantities of order 1 ($n = 1$) for $\eps = 0.5$, still for the pure quasi-periodic problem ($\gamma = 0$). 
They correspond respectively to the cases 1F and 3F. 
These are computed with high precision, such that no issues of numerical accuracy are considered at the moment.

On the left plots, the macro variable $v\rk1$ corresponds exactly to the limit solution $u^{\rm lim}$ since, for $\gamma = 0$, $v\rk1(0) = u_0$. 
However, the addition of the near-identity map $\Phi\rk1$ allows to incorporate the fast oscillations and to get closer to the exact solution $u^\eps$ (blue plus-marked curves in Fig. \ref{Fig:toy_sol_ex}). 
On the right, we present the micro variable $w\rk1$ that retains the information contained in the remainder. 
As expected by Corollary~\ref{corollary:mima_well_posed}, it is of size $\bigO(\eps^2)$. 
In addition, its second derivative $\partial^2_t w$ is of size $\bigO(1)$ in accordance with Corollary~\ref{corollary:mima_bounded_derivatives}. 
It confirms that the micro-macro problem can be solved with a standard scheme. 
Again, the only difference between the cases 1F and 3F concerns the almost periodicity of oscillations, both for the near-identity map and the macro variable. 
For this reason, we focus only on the more generic multi-frequency case in the sequel. 

\begin{figure}[h!]
\begin{center}
\includegraphics[scale=0.18, keepaspectratio=true]{./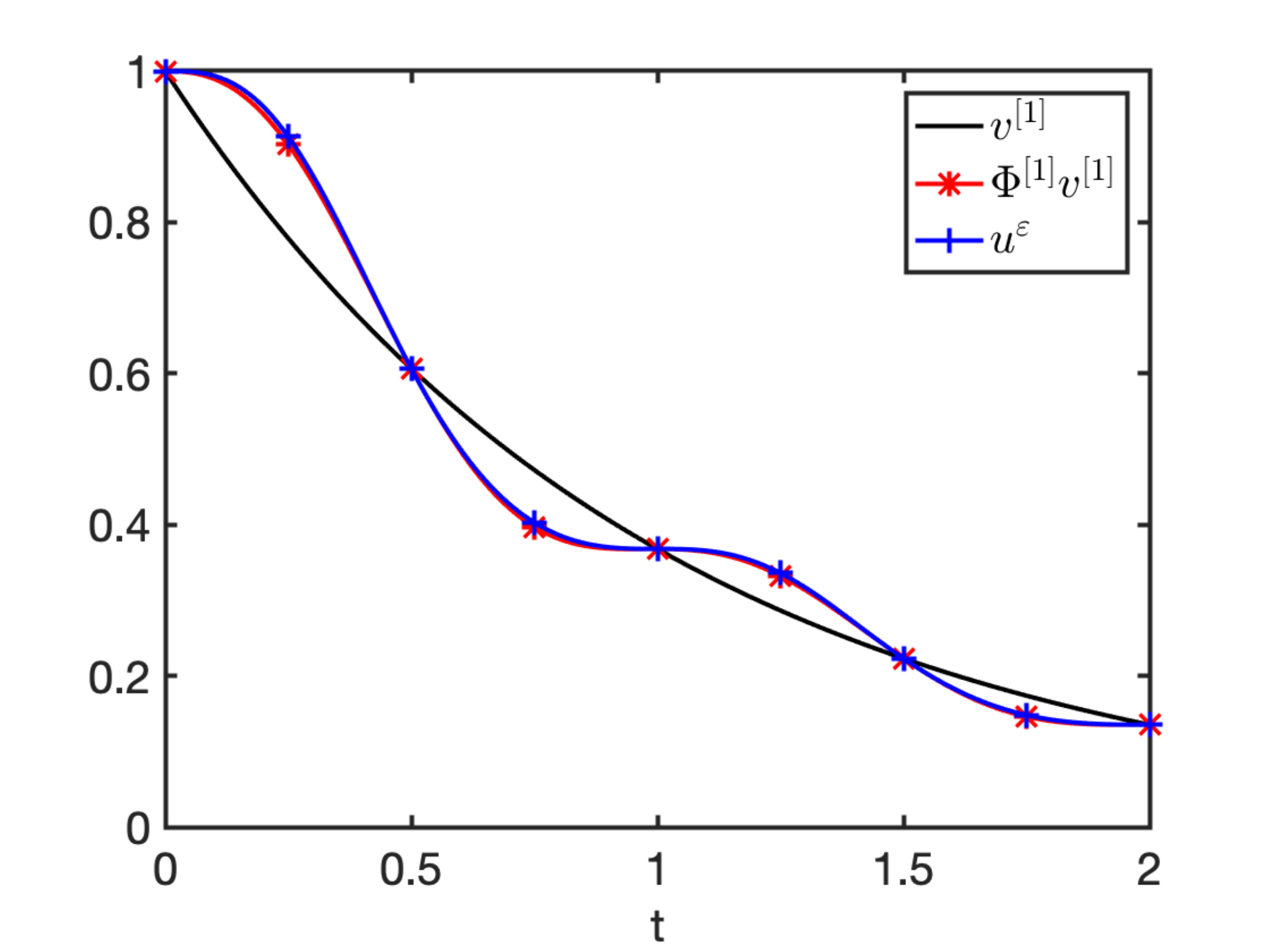}\hspace{0.5cm}
\includegraphics[scale=0.18, keepaspectratio=true]{./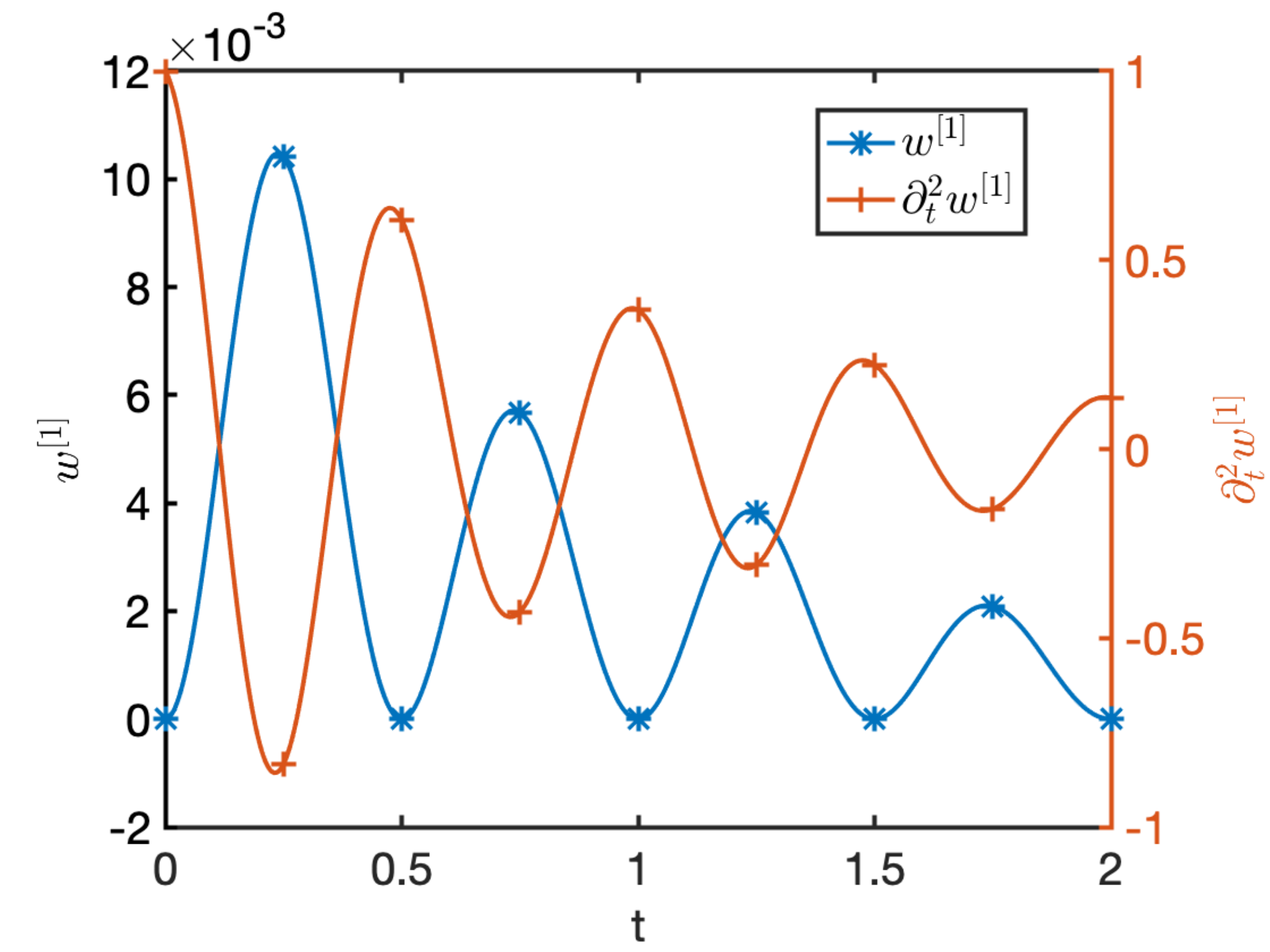}
\end{center}
\caption{Case 1F, $\eps = 0.5$; time evolution of $v\rk1$, $\Phi\rk1 v\rk1$ and $u^\eps$ (left) and of $w\rk1$ and an approximation of $\partial^2_t w\rk1$ (right).}
\label{Fig:toy_decomp_caseA}
\end{figure}

\begin{figure}[h!]
\begin{center}
\includegraphics[scale=0.18, keepaspectratio=true]{./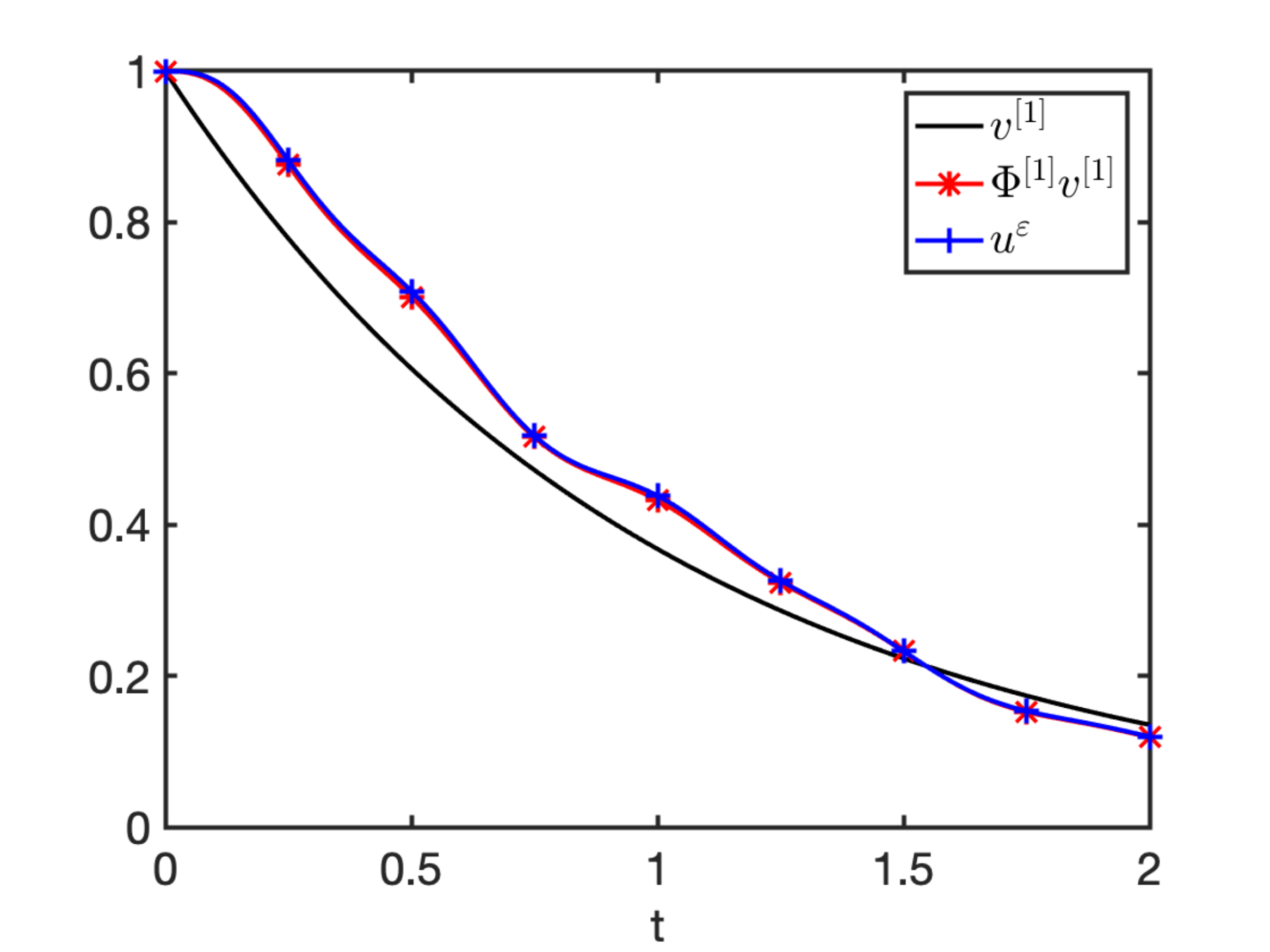}\hspace{0.5cm}
\includegraphics[scale=0.18, keepaspectratio=true]{./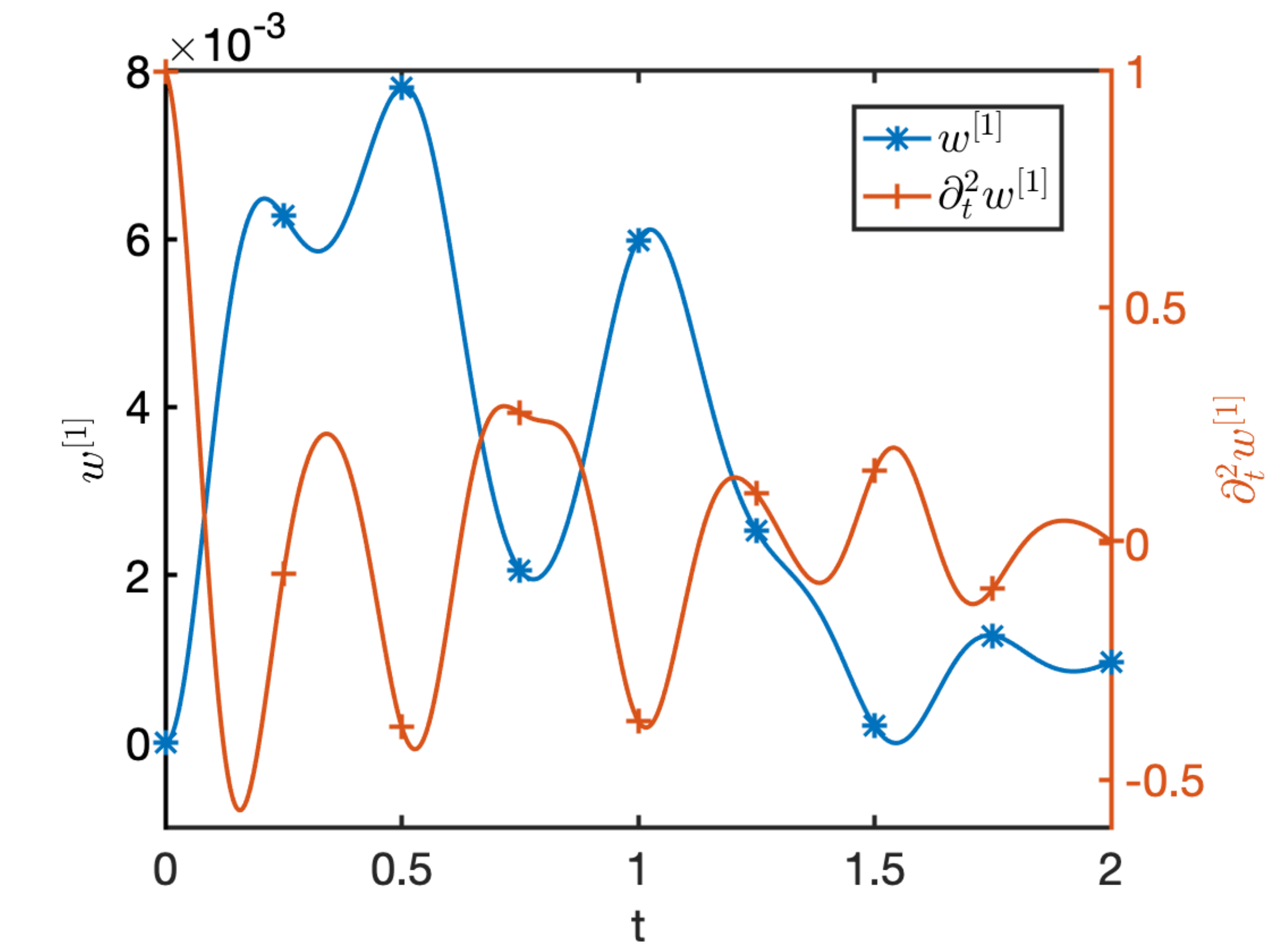}
\end{center}
\caption{Case 3F, $\eps = 0.5$; time evolution of $v\rk1$, $\Phi\rk1 v\rk1$ and $u^\eps$ (left) and of $w\rk1$ and an approximation of $\partial^2_t w\rk1$ (right).}
\label{Fig:toy_decomp_caseB}
\end{figure}

\subsubsection{Errors on the pure quasi-periodic problem ($\gamma = 0$)}

We now analyze the numerical resolution of the micro-macro problem. 
First, we focus on the quasi-periodic case choosing $\gamma = 0$ (the addition of the exponentially decaying term is studied in the next section).

To evaluate the numerical solutions, we consider the following error:
\begin{equation*}
	E(\Delta t, \eps) = \max_{0 \leq \ell \leq L} | u^\eps(t^\ell) - u^\ell |,
\end{equation*}
where $u^\ell$ is the numerical solution either  solving the stiff problem \eqref{eq:ode_u} or reconstructed from the micro-macro problem \eqref{eq:mima}.

In Fig. \ref{Fig:toy_err_eps} and Fig. \ref{Fig:toy_err_eps_without_w}, we present the errors obtained using the standard RK2 scheme. 
The numerical resolution of the stiff problem \eqref{eq:ode_u} does not yield suitable results. 
Indeed, in Fig. \ref{Fig:toy_err_eps} (left), in the standard regime $\Delta t \ll \eps$, the error for a given $\Delta t$ increases as $\eps$ decreases. 
Exiting this regime, for smaller $\eps$,  the error becomes hard to predict. 
We observe pronounced peaks for some specific values of $\eps$. 
It is known that in the case 1F, the solution has a specific behavior when the time-step resonates with the frequency of the problem, i.e. when $\omega_1 \Delta t/\eps$ is a multiple of $2\pi$. 
For such a relation between $\Delta t$ and $\eps$, a standard scheme completely fails. 
The left-hand side of Fig.~\ref{Fig:toy_err_eps} demonstrates that this phenomenon still occurs in the quasi-periodic case, even without perfect resonances.
In the right-hand side, for large $\eps$, the error decreases with order 2 as expected, but when $\eps$ decreases, we observe an order reduction with slopes closer to order $1$. 
Even worse, for small $\eps$, there are some values of $\Delta t$ for which the error is of size $\bigO(1)$.

\begin{figure}[h!]
\begin{center}
\includegraphics[scale=0.18, keepaspectratio=true]{./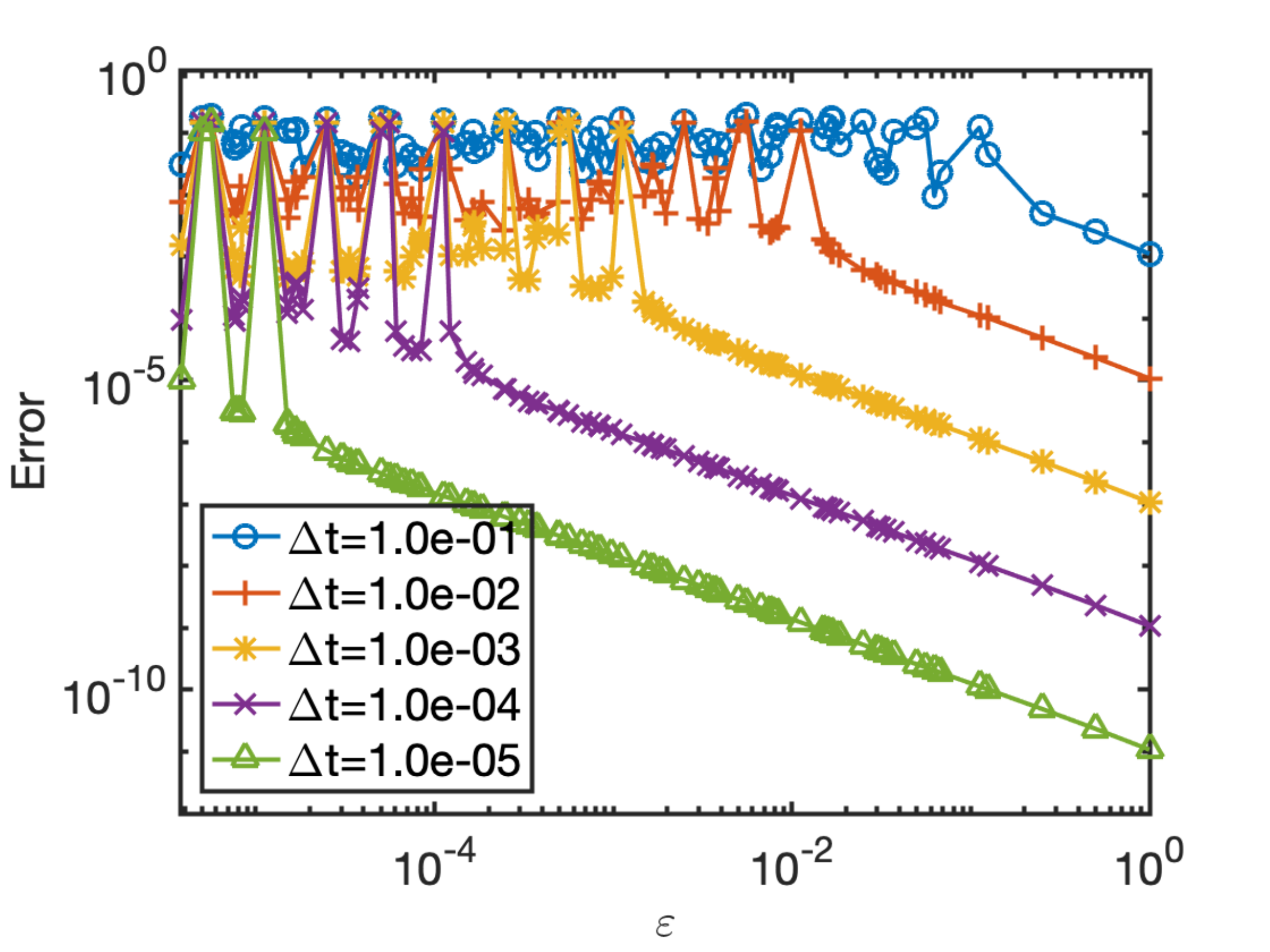}\hspace{0.5cm}
\includegraphics[scale=0.18, keepaspectratio=true]{./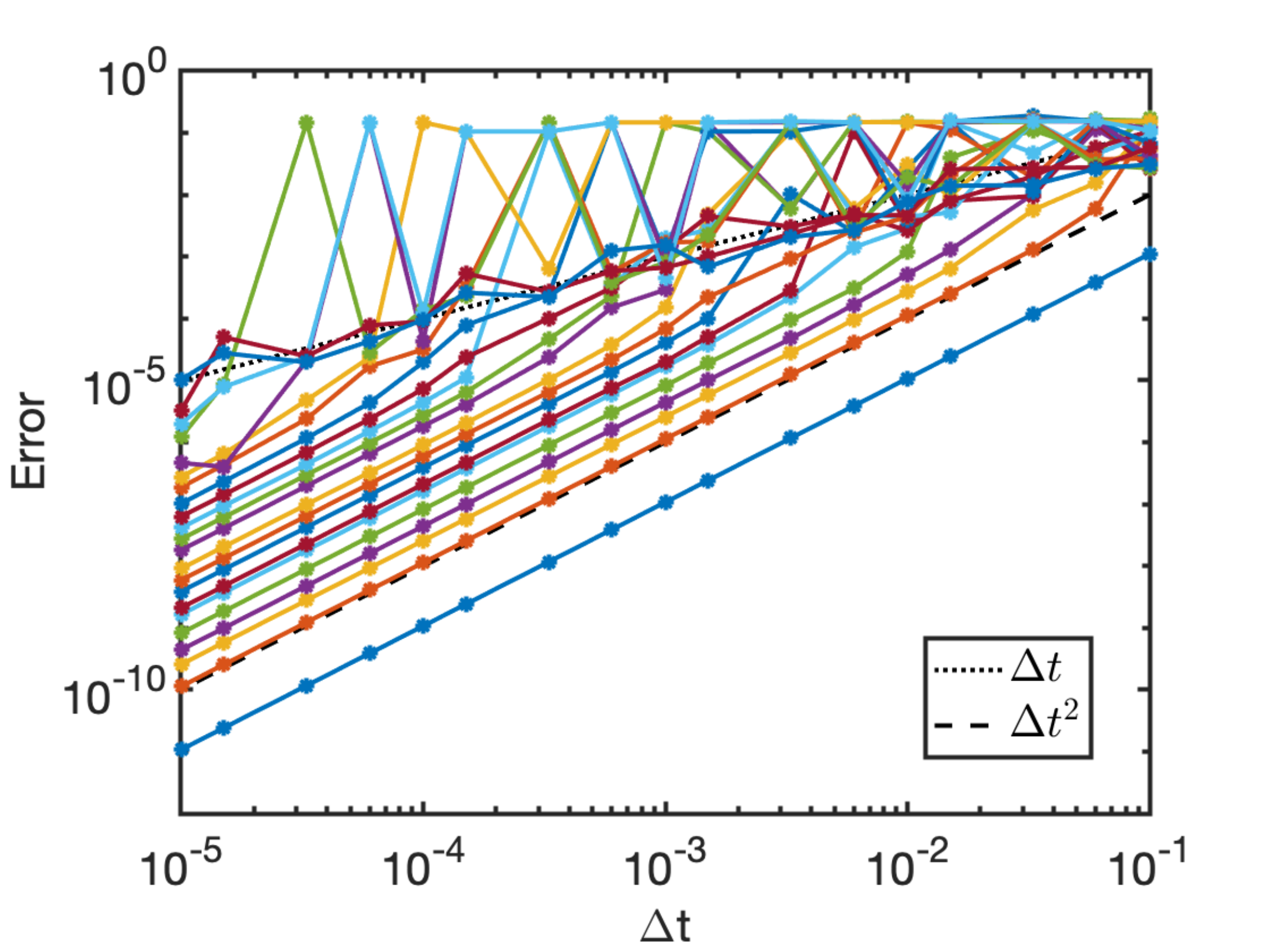}
\end{center}
\caption{Case 3F, RK2 scheme solving the stiff problem \eqref{eq:ode_u}; error with respect to $\eps$ for various $\Delta t$ (left) and with respect to $\Delta t$ for various $\eps$ (right).}
\label{Fig:toy_err_eps}
\end{figure}

On the contrary, the numerical resolution of the micro-macro problem of order 2~\eqref{eq:toy_pb_order2} gives a uniform accuracy i.e. independent of $\eps$ as observed in the left-hand side of Fig. \ref{Fig:toy_err_eps_without_w}. 
The errors associated to a given discretization step form a perfect horizontal line. 
Having computed the micro-macro variables $(v\rk2 , w\rk2)$, we may decide to build $u\rk2$ without incorporating the information of the remainder, i.e. using the relation $u\rk2 = \Phi\rk2 v\rk2$ instead of $u\rk2 = \Phi\rk2 v\rk2 + w\rk2$. 
This is plotted in the right-hand side of Fig. \ref{Fig:toy_err_eps_without_w}. 
As expected, we observe two regimes, with an error of size $\bigO(\eps^3)$ for $\eps^3$ larger than $\Delta t^2$ and of size $\bigO(\Delta t^2)$ for smaller values of $\eps$. 
It confirms that the macro variable enhanced by the near-identity map gives accuracy for small values of $\eps$ and that relevant information for large values of $\eps$ is retained by the micro variable. 

\pagebreak

\begin{figure}[h!]
\begin{center}
\includegraphics[scale=0.18, keepaspectratio=true]{./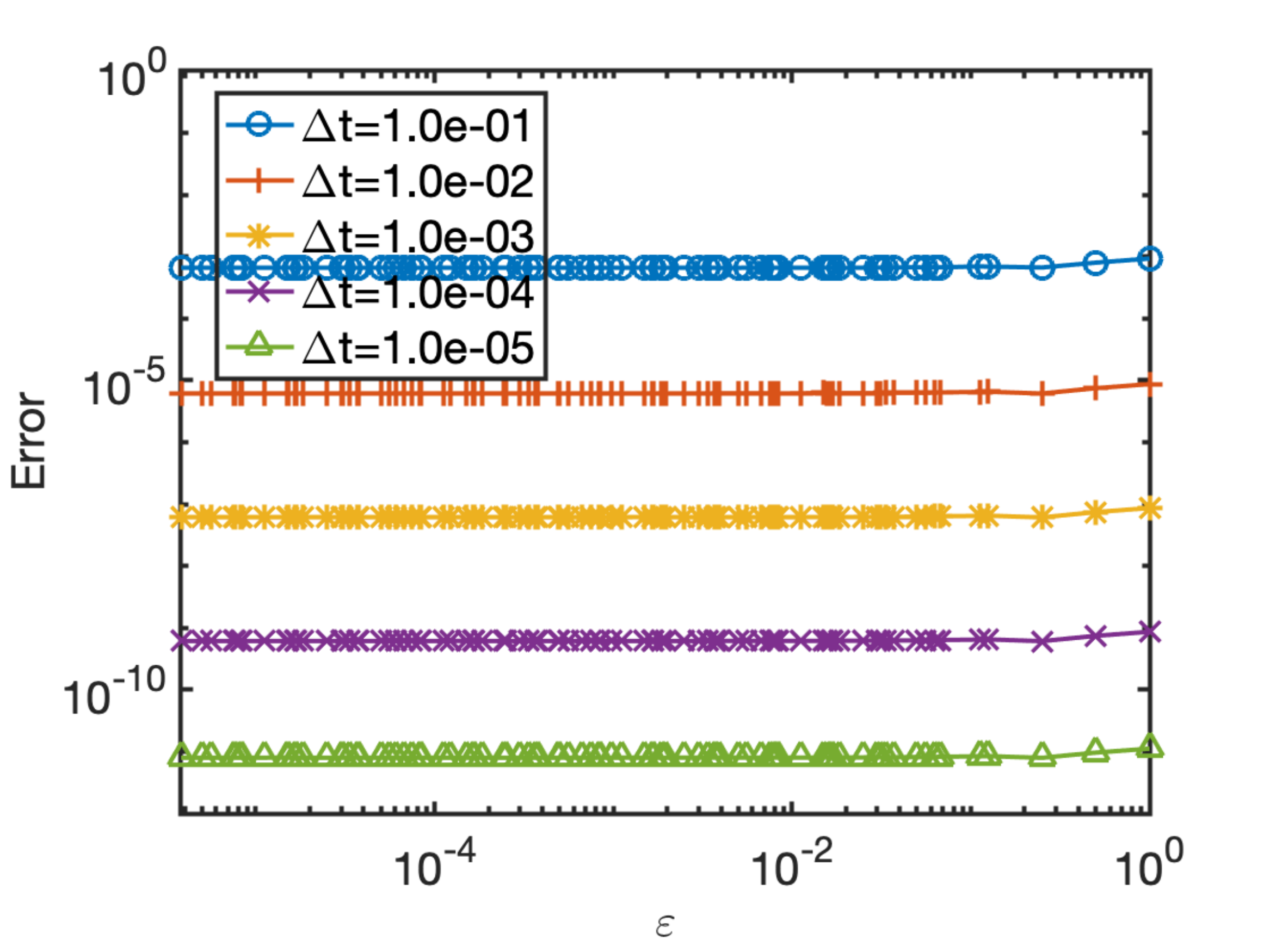}\hspace{0.5cm}
\includegraphics[scale=0.18, keepaspectratio=true]{./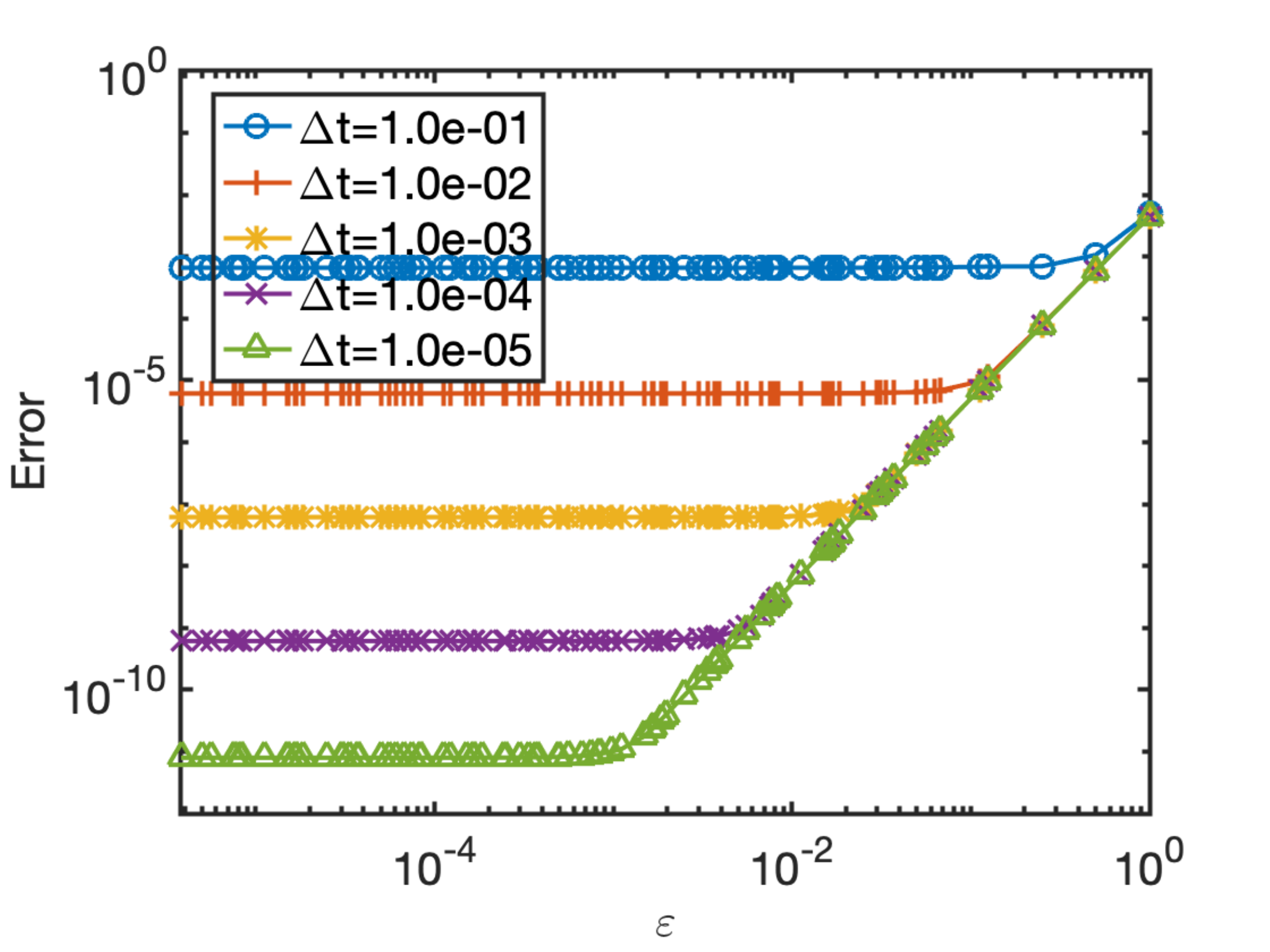}
\end{center}
\caption{Case 3F, RK2 scheme solving the macro equation \eqref{eq:mima} for $n = 2$; error with respect to $\eps$ for various $\Delta t$ defining $u\rk2 = \Phi\rk2 v\rk2 + w\rk2$ (left) and $u\rk2 = \Phi\rk2 v\rk2$ (right).}
\label{Fig:toy_err_eps_without_w}
\end{figure}

The error with respect to $\Delta t$ is presented in Figs. \ref{Fig:toy_err_dt}--\ref{Fig:toy_err_dt_other_schemes} for various values of $\eps$. In Fig. \ref{Fig:toy_err_dt}, the same value is used for the non-stiff convergence order $s$ of the scheme and for the approximation order $n$ of the micro-macro decomposition. 
The curves corresponding to the various $\eps$ are indistinguishable straight lines, in accordance with the flat lines of Fig.~\ref{Fig:toy_err_eps_without_w}. 
This illustrates once more the uniform accuracy of the method. 
Moreover, we actually get a slope of $1$ in the left figure (case $s = n = 1$) and of $2$ in the right figure (case $s = n = 2$).

\begin{figure}[h!]
\begin{center}
\includegraphics[scale=0.18, keepaspectratio=true]{./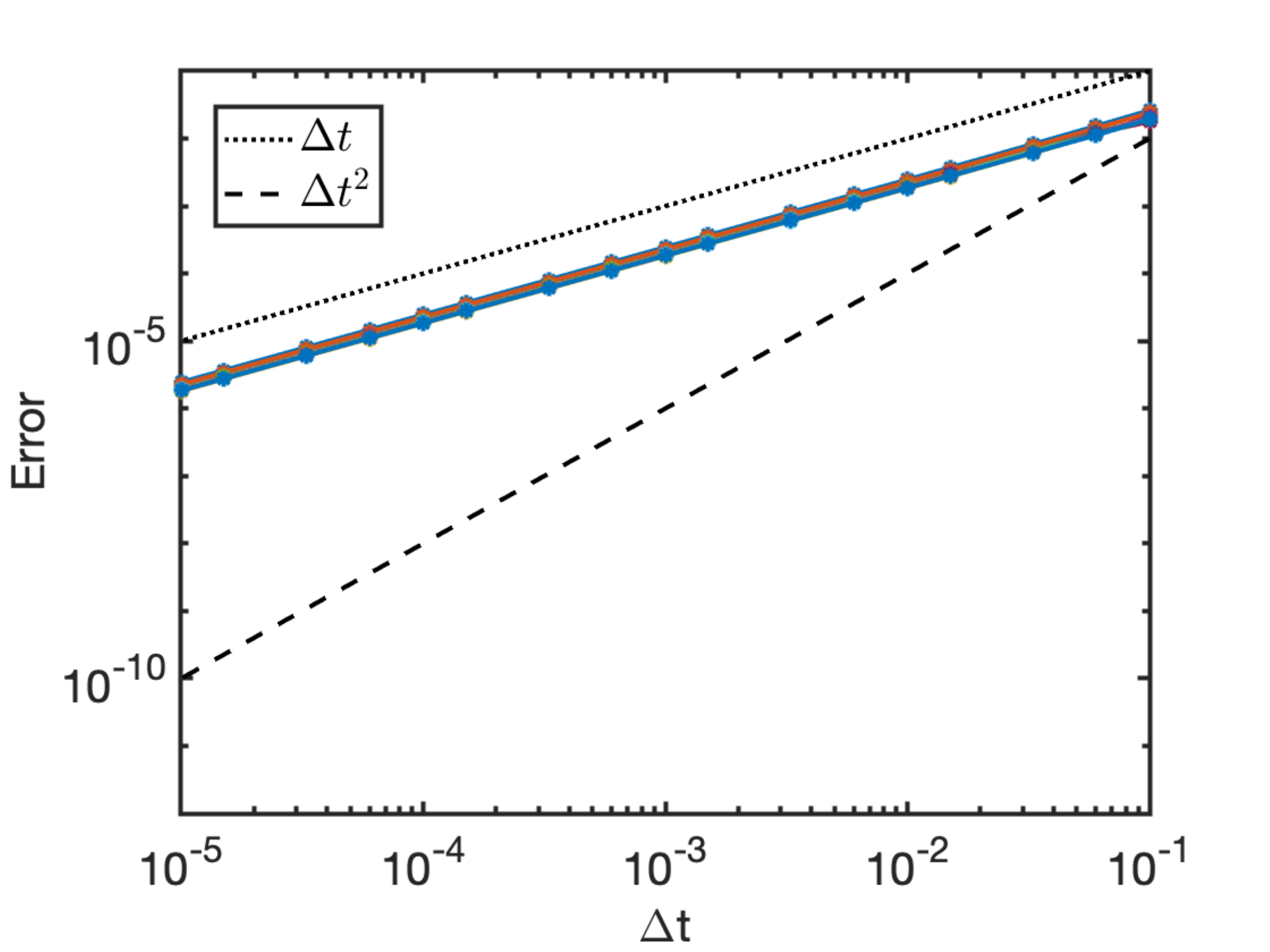}\hspace{0.5cm}
\includegraphics[scale=0.18, keepaspectratio=true]{./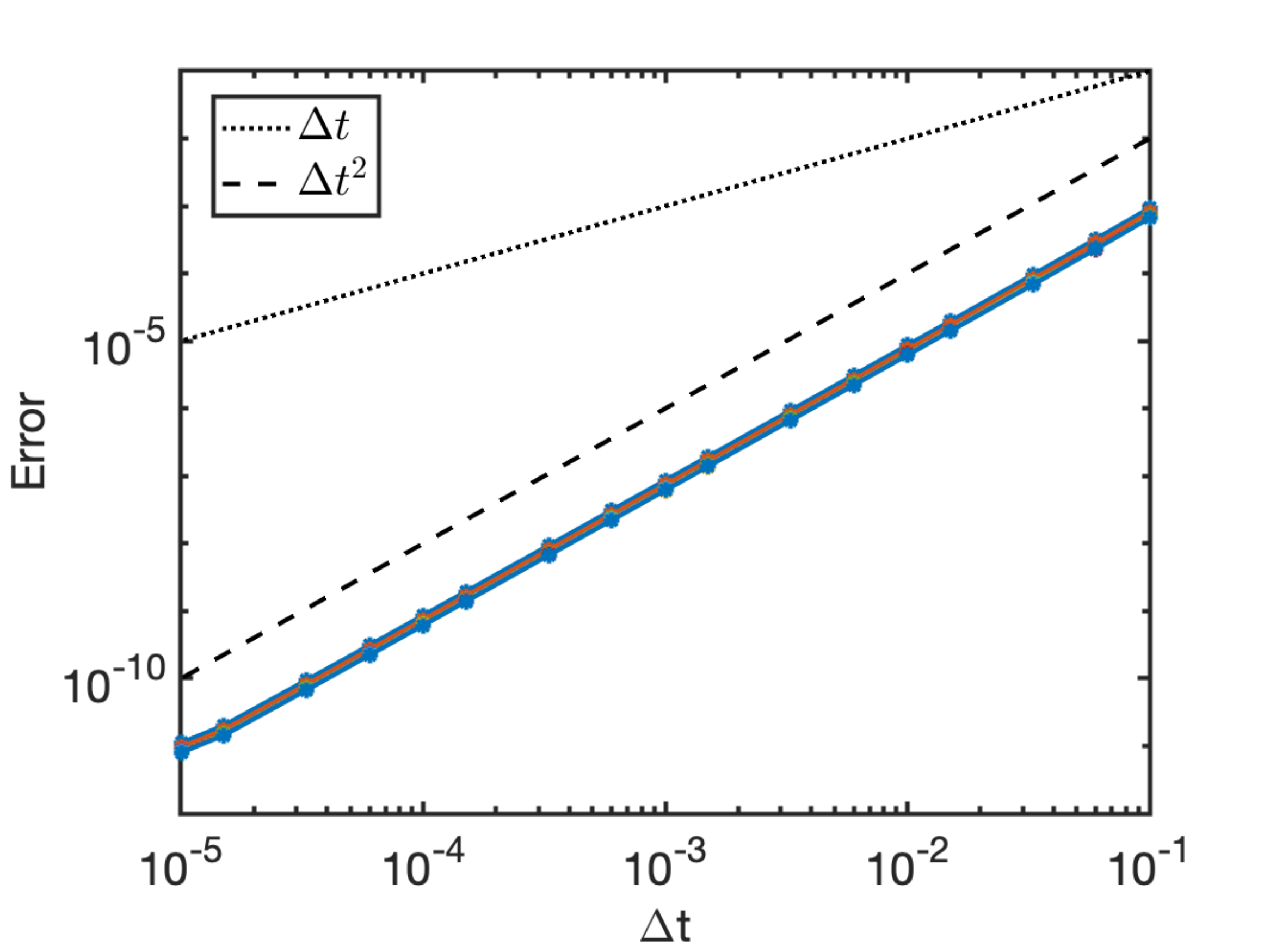}
\end{center}
\caption{Case 3F, solving \eqref{eq:mima} for $n=1$ and using EE (left) and for $n = 2$ and using RK2 (right); error with respect to $\Delta t$ for various $\eps$.}
\label{Fig:toy_err_dt}
\end{figure}

In Corollary~\ref{corollary:mima_ua}, we state that the scheme should be of the same order as that of the micro-macro expansion. 
We illustrate this in Fig.~\ref{Fig:toy_err_dt_other_schemes} (left) by using a standard scheme of order 2 but keeping the micro-macro approximation of order 1. 
At first glance, the error is of size $\bigO(\Delta t^2)$ but there exist some couples $(\Delta t, \eps)$ for which the error deteriorates. 
On the contrary, perfect straight lines fully on top of each other are observed in Fig. \ref{Fig:toy_err_dt_other_schemes} (right) when using the RK2int scheme. 
It illustrates that the convergence order may be increased by one using a Runge-Kutta integral scheme as mentioned in Section \ref{sec:result_ua}.

\begin{figure}[h!]
\begin{center}
\includegraphics[scale=0.18, keepaspectratio=true]{./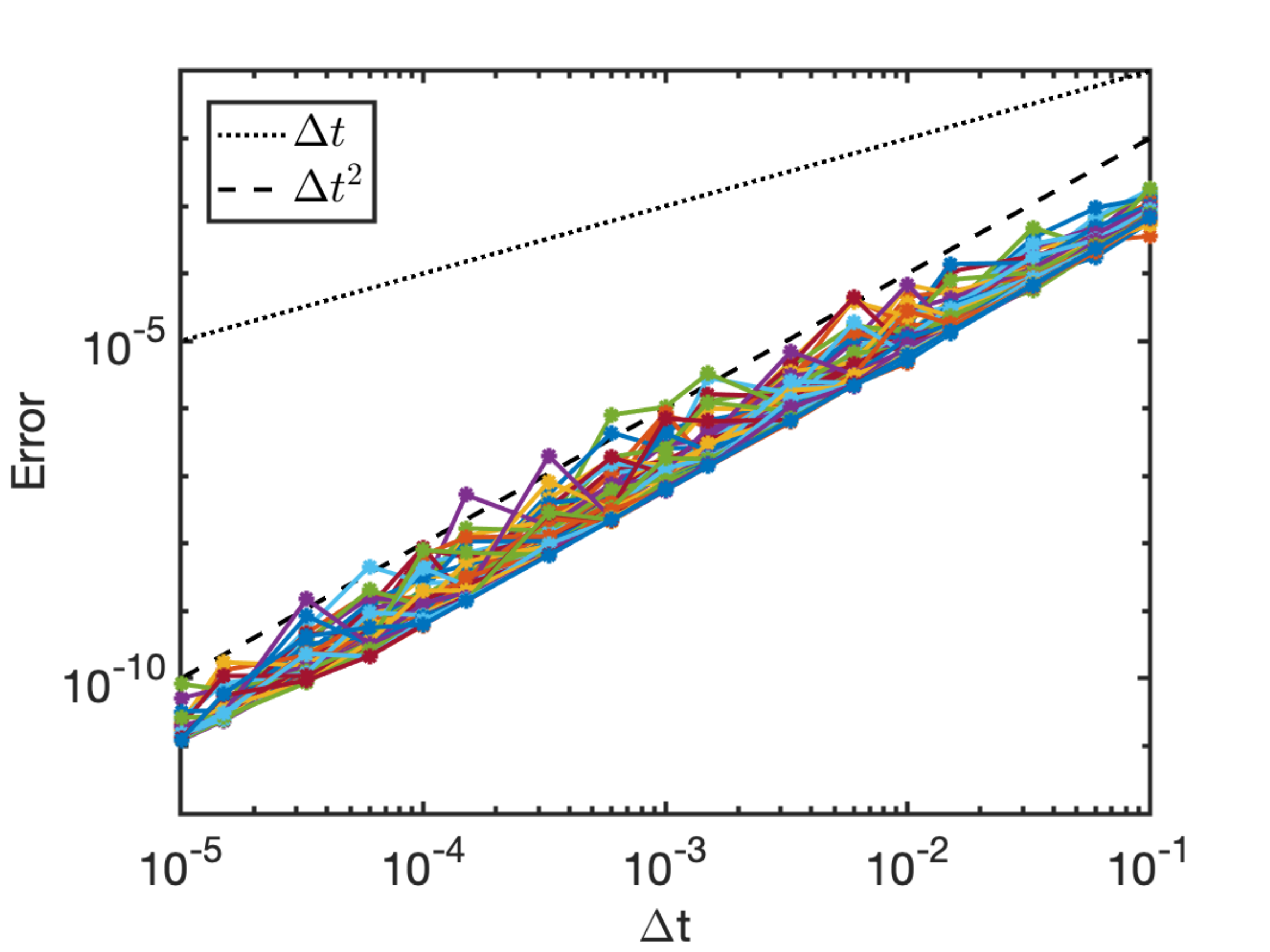}\hspace{0.5cm}
\includegraphics[scale=0.18, keepaspectratio=true]{./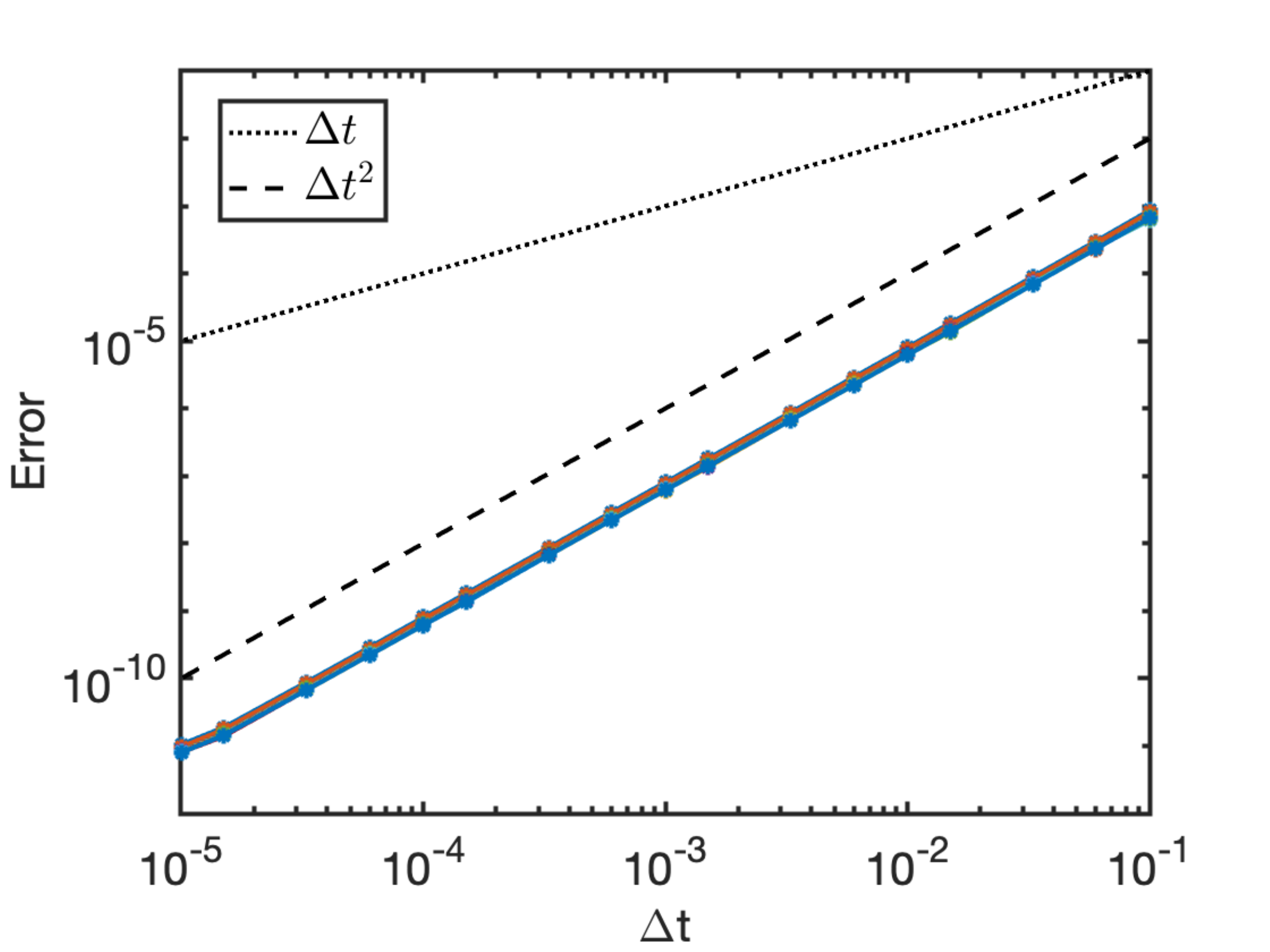}
\end{center}
\caption{Case 3F, solving \eqref{eq:mima} for $n = 1$ using RK2 (left) and RK2int (right); error with respect to $\Delta t$ for various $\eps$.}
\label{Fig:toy_err_dt_other_schemes}
\end{figure}

\subsubsection{Errors adding the decreasing term}

To finish the discussion on the scalar test problem, we finally add the exponentially decreasing part $a^\dmp_\tau$ choosing now $\gamma = 1$. 
In Fig. \ref{Fig:toy_alpha_1}, we display the error with respect to $\eps$ for various $\Delta t$ (left) and with respect to $\Delta t$ for various $\eps$ (right) solving the micro-macro problem \eqref{eq:mima} of order $n = 2$ with the RK2 scheme. 
We obtain uniform accurate results since all errors are of size $\bigO(\Delta t^2)$ independently of $\eps$. 
This last case allows to check that the flat part in the sharp-flat decomposition does not bring further numerical difficulties, in accordance with the feeling given by the proofs of Section~\ref{sec:proofs}.

\begin{figure}[h!]
\begin{center}
\includegraphics[scale=0.18, keepaspectratio=true]{./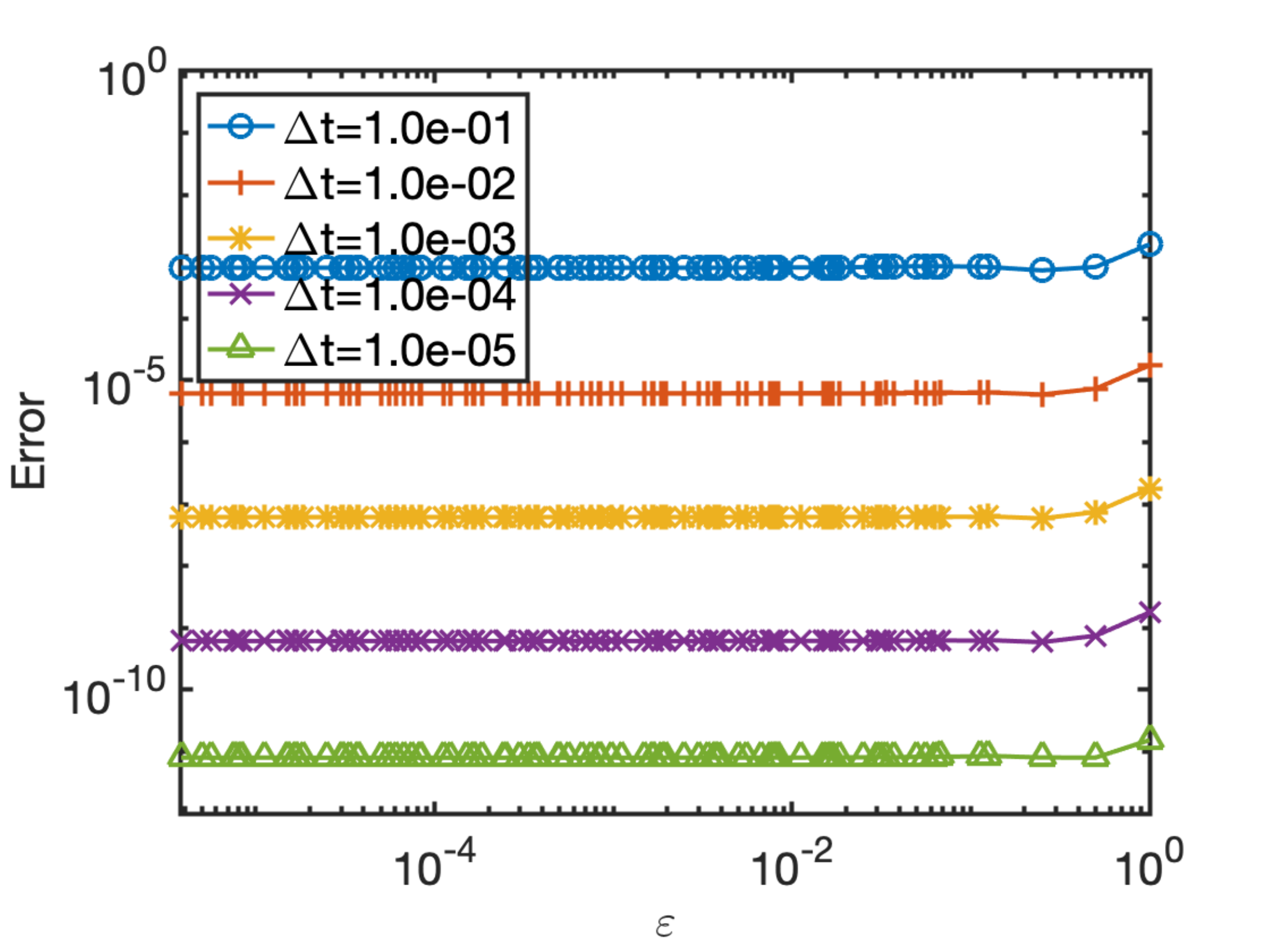}\hspace{0.5cm}
\includegraphics[scale=0.18, keepaspectratio=true]{./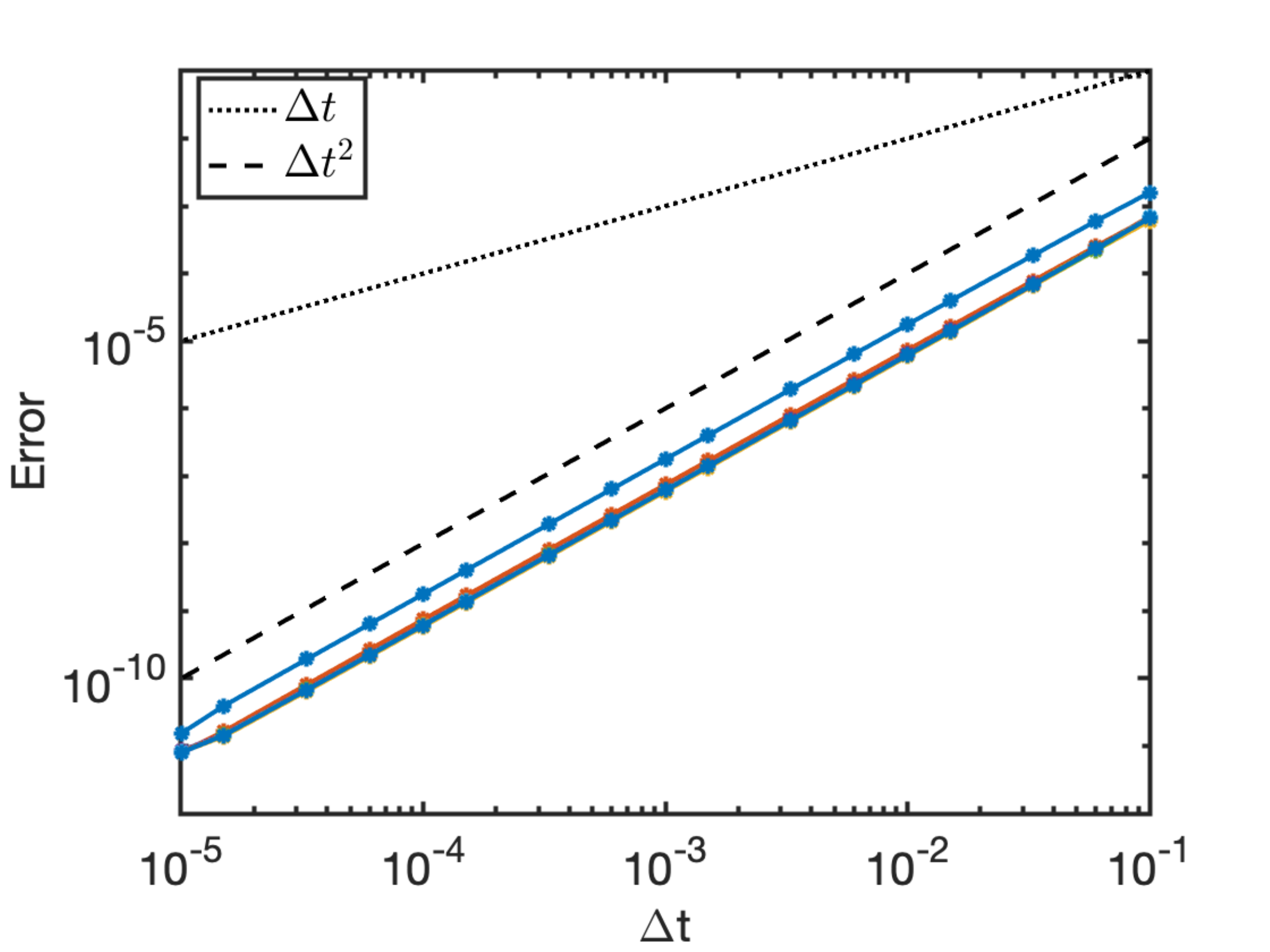}
\end{center}
\caption{Case 3F, $\gamma = 1$, solving \eqref{eq:mima} for $n = 2$ using RK2; error with respect to $\eps$ for various $\Delta t$ (left) and with respect to $\Delta t$ for various $\eps$ (right).}
\label{Fig:toy_alpha_1}
\end{figure}

\pagebreak

\subsection{Bloch model and a hierarchy of approximations}

\subsubsection{Presentation of the Bloch model}

Let us briefly present a Bloch model that governs the time evolution of the density matrix $\rho \in \mathcal{M}_n(\setC)$ associated to a quantum system described by $n$ discrete energy levels and forced by a given high frequency electromagnetic wave. 
More precisely, it corresponds to the scaled matrix equation 
\begin{equation*}
	%\label{eq:bloch}
	i \eps^2 \partial_t \rho(t) = [H_0 - \eps \mathcal{V}(t/\eps^2), \rho(t)] + i Q(\rho),
\end{equation*}
where $[\cdot, \cdot]$ denotes the commutator between two matrices, $H_0 = \diag(E_1, \cdots, E_n)$ is the free Hamiltonian expressed in terms of the (scaled) energies $E_j$ associated to each energy level, $\mathcal{V}$ is the time dependent electric potential matrix and $Q$ is a relaxation term that takes into account physical phenomena involving energy-dissipating processes or collisions between particles. 
The density matrix $\rho$ is made of (non negative) diagonal quantities, denoted $\rho_{{\rm d}, j}$, called populations and representing the occupation number of the levels, and of off-diagonal quantities, denoted $\rho_{{\rm od}, jk}$ (with $j \neq k$), called coherences and describing the probability of transitions from one level to another.

A rigorous asymptotic analysis of this model (when the small parameter $\eps > 0$ goes to $0$) has been addressed in \cite{BidegarayFesquet2004a}. 
In that scaling, the evolution is considered over long times, of size $1/\eps^2$  and the influence of the electromagnetic wave is weak, of size $\eps$, and depends on the fast time scale $t/\eps^2$. 
Considering the bipolar approximation, we assume that the entries of the interaction potential matrix $\mathcal{V}$ are of the form
\begin{equation*}
	 \mathcal{V}_{jk}(\tau) = V^\qp(\tau) p_{jk},
\end{equation*}
where $p$ is a given hermitian (dipolar moment) matrix and $V^\qp$ is a given (quasi-\nobreak)pe\-riodic function that takes into account the time dependence of the wave. 
Also, we assume that the quantum system relaxes to a given equilibrium state, via relaxation coefficients $\gamma_{jk}$ which have an effect on the off-diagonal part of the density matrix only. 
More precisely we consider that $Q(\rho)_{jk} = - \gamma_{jk} \rho_{jk}$ where relaxation coefficients, uniform with respect to $\eps$, are such that $\gamma_{jk} = \gamma_{kj} > 0$ for all $j \neq k$ and $\gamma_{jj} = 0$ for all $j$. 
The basic dynamics is thus given by a (damped) high-frequency oscillation, with frequency $1/\eps^2$. Indeed, denoting $\Omega_{jk} = - i (E_j-E_k) - \gamma_{jk}$, the coefficient $\rho_{jk}$ of the density matrix is solution to the equation    
\begin{equation}
	\label{eq:bloch2}
	\partial_t \rho_{jk}(t) = \frac1{\eps^2} \Omega_{jk} \rho_{jk}(t) + \frac{i}\eps V^\qp(t/\eps^2)  [p, \rho(t)]_{jk}.
\end{equation}
Finally, at initial time $t = 0$, we assume a density matrix with vanishing coherences and non negative populations $\rho_{\rm d}^{\rm init}$. 

\begin{remark}
	Notice that, compared to the previous sections, the characteristic time is now $\eps^2$ instead of $\eps$. 
	Consequently, $\tau$ now refers to $t/\eps^2$ and not to $t/\eps$.
\end{remark}

In this paper, we propose to use the micro-macro problem introduced and analyzed in the previous sections to obtain a uniformly accurate scheme. 
However, we do not tackle the resolution of the entire Bloch model \eqref{eq:bloch2} with both coherences and populations. 
It does not enter directly  into the ``sharp-flat'' framework and its numerical resolution is beyond the scope of this paper. 
Instead, we consider an approximation (presented in the next section) that gives an equation governing the populations only. 
Indeed, in \cite{BidegarayFesquet2004a}, it has been proven that, in the limit $\eps \to 0$, the diagonal part of the density matrix solution to \eqref{eq:bloch2} tends to the solution of a rate equation in which the transition rate is an appropriate time average of the potential, while the off-diagonal part vanishes. 
Interestingly, the asymptotic analysis is based on successive approximations which, after some point, all fit into the sharp-flat framework considered here.

\subsubsection{Transformation to a ``sharp-flat'' problem}

We first transform the model \eqref{eq:bloch2} into a closed equation governing the populations $\rho_{\rm d}$ only. 
As detailed in \cite{BidegarayFesquet2004a}, this is done by writing the equation on coherences as an integral equation and keeping only the first order expansion in $\eps$ of the right-hand side. 
Since the coherences initially vanish, this depends only on the populations, and inserting it into the population equation gives a time delayed integro-differential equation. 
Finally, the delay being small, the populations tend to be the solution of the delay-free equation
\begin{equation}
	\label{eq:bloch_pop1}
  	\partial_t \rho^{\qp \dmp}_{{\rm d}, j} 
	= \sum_{l \neq j} (\Psi_{t/\eps^2})_{lj} \big(\rho^{\qp \dmp}_{{\rm d}, l}(t) - \rho^{\qp \dmp}_{{\rm d}, j}(t) \big),
\end{equation}
where we introduced the time dependent transition rate
\begin{equation}
	\label{eq:bloch_psi1}
	(\Psi_\tau)_{lj} 
	= 2 |p_{lj}|^2 \Re\Big[ V^\qp(\tau) \int_0^\tau \exp\big( \Omega_{lj} \sigma \big) V^\qp(\tau - \sigma) \D\sigma \Big].
\end{equation}
In \cite{BidegarayFesquet2004a}, it is proven that, for all $T > 0$, there exists $C > 0$, independent of $\eps$, such that
\begin{equation*}
	%\label{eq:bloch_approximation_accuracy}
	\| \rho_{\rm d} - \rho_{\rm d}^{\qp \dmp} \|_{L^\infty([0,T], l^1)} \leq C \eps,
\end{equation*}
with the notation $\| \rho_{\rm d} \|_{l^1} = \sum_{j=1}^n | \rho_{{\rm d}, j} |$.

As it can be seen in Appendix \ref{appendix:psi} where explicit expressions of $(\Psi_\tau)_{lj}$ are computed for a specific $r$-chromatic $V^\qp(\tau)$, the transition rate defined in \eqref{eq:bloch_psi1} is the sum of a (quasi-)periodic part $\Psi_{lj}^\qp$ and an exponentially decaying part  $\Psi_{lj}^\dmp$ as in Section~\ref{sec:results}. 
This is the reason why the problem \eqref{eq:bloch_pop1}-\eqref{eq:bloch_psi1} completed with the initial condition $\rho^{\qp \dmp}_{\rm d}(0) = \rho_{\rm d}^{\rm init}$ is described by the sharp-flat framework and can be solved with uniform accuracy using the micro-macro problem \eqref{eq:mima} as we illustrate in the sequel.

\begin{remark}
	The equation \eqref{eq:bloch_pop1} can be written
	\begin{equation*}
		\partial_t \rho^{\qp \dmp}_{\rm d}(t) = a_{t/\eps^2} \rho^{(1)}_{\rm d}(t) 
	\end{equation*}
	introducing the population vector $\rho^{\qp \dmp}_{\rm d} = (\rho^{\qp \dmp}_{{\rm d}, 1}, \cdots, \rho^{\qp \dmp}_{{\rm d}, n})^T$ and 	defining the matrix map $\tau \mapsto a_\tau$ such that 
	\begin{equation*}
		(a_\tau)_{jk} = 
		\begin{cases} 
			(\Psi_\tau)_{kj} & \text{ if } j \neq k, \\
			- \sum_{l \neq j} (\Psi_\tau)_{lj} & \text{ if } j = k.  
		\end{cases}
	\end{equation*}
	This form is used for the implementation. 
	Nevertheless, for the simplicity of the presentation, we consider in the sequel the matrix $\Psi_\tau$ instead of $a_\tau$ and we have to keep in mind that it is the matrix occurring in the rate equation.
\end{remark}

\subsubsection{Further approximations of the populations}

Before presenting some numerical results, we shortly describe the next approximation in the hierarchy analyzed in \cite{BidegarayFesquet2004a} as well as the limit problem with the averaged transition rate.

The transition rate $\Psi$ defined in \eqref{eq:bloch_psi1} can be  approximated by a rate $\Psi^\infty$ defined integrating up to $+\infty$ instead of $\tau$, i.e.
\begin{equation}
	\label{eq:bloch_psi_inf}
	(\Psi^\infty_\tau)_{lj}  
	= 2 |p_{lj}|^2 \Re\Big[ V^\qp(\tau) \int_0^{+\infty} \exp\big( \Omega_{lj} \sigma \big) V^\qp(\tau - \sigma) \D\sigma \Big].
\end{equation}
As emphasized by the explicit computations presented in Appendix \ref{appendix:psi}, it corresponds to neglecting the exponentially decaying part $\Psi_{lj}^\dmp$. 
We obtain new approximate populations, denoted $\rho^{\rm osc}_{\rm d}$, that verify the following rate equation with a quasi-periodic time dependent transition rate 
 \begin{equation}
 	\label{eq:bloch_pop_osc}
  	\partial_t \rho^{\rm osc}_{{\rm d}, j}(t) 
	=  \sum_{l \neq j} (\Psi^\infty_{t/\eps^2})_{lj} \big( \rho^{\rm osc}_{{\rm d}, l}(t)- \rho^{\rm osc}_{{\rm d}, j} (t) \big).
 \end{equation}
Finally, by averaging theory, $\Psi^\infty$ (as well as $\Psi$) can be approximated by a time independent transition rate
\begin{equation}
	\label{eq:bloch_psi_average}
	\mean{\Psi}_{lj} = \lim_{T \to +\infty} \frac1T \int_0^T (\Psi^\infty_\sigma)_{lj} \D\sigma 
	= \lim_{T \to +\infty} \frac1T \int_0^T (\Psi_\sigma)_{lj} \D\sigma
\end{equation}
leading to the limit problem
\begin{equation}
	\label{eq:bloch_pop_lim}
	\partial_t \rho^{\rm lim}_{{\rm d}, j}(t) 
	= \sum_{l \neq j} \mean{\Psi}_{lj} \big( \rho^{\rm lim}_{{\rm d}, l}(t) - \rho^{\rm lim}_{{\rm d}, j}(t) \big).
\end{equation}
Again, an explicit expression of $\mean{\Psi}$ is presented in Appendix \ref{appendix:psi}. In \cite{BidegarayFesquet2004a}, the convergence of $\rho^{\qp \dmp}_{\rm d}$ to $\rho^{\rm lim}_{\rm d}$ (and thus of $\rho_{\rm d}$ to $\rho^{\rm lim}_{\rm d}$) is proven to be in $\bigO(\eps)$ for quasi-periodic waves fulfilling the Diophantine inequality \eqref{eq:diophantine} and in $o(1)$ for more general KBM waves.

Obviously the averaged linear equation \eqref{eq:bloch_pop_lim} is not stiff and does not present any numerical difficulties. 
It is also the equation governing the zero order micro-variable $v\rk0$ when a micro-macro decomposition is used to approximate either \eqref{eq:bloch_pop1} or \eqref{eq:bloch_pop_osc}.

\subsubsection{Numerical results}

In this part, we present the numerical results obtained with the micro-macro schemes described in Section \ref{sec:num_scheme}. 
The micro-macro decomposition requires some offline symbolic computations depending on the prescribed electromagnetic wave. 
Due to the complexity of these  computations, we limit our micro-macro decomposition to order 1, thereby obtaining second order uniform accuracy with the RK2int scheme. 
It is also for this reason that we consider the equation \eqref{eq:bloch_pop1} for the case 1F and the equation \eqref{eq:bloch_pop_osc} for the case 3F.
An example of such computations is presented in Appendix~\ref{appendix:off-line_computations} for the monochromatic forcing.

For the numerical tests, we use in the sequel $n = 3$ quantum levels with scaled relative energies $E_1 = 0$, $E_2 = 2$ and $E_3 = 3$,  identical relaxation coefficients and dipolar moment coefficients ($\gamma_{jk} = 1-\delta_{jk}$ and $p_{jk} = 1 - \delta_{jk}$  where $\delta_{jk}$ is the Kronecker delta), $T = 10$ as final time and $\rho_{{\rm d}}^{\rm init} = [0, 0, 1]$ as initial population. 
The choice of frequencies $\omega_p$ in $V^\qp$ is similar to the one of the previous section (case 1F and case 3F), with $V^\qp(\tau) = \frac1r \sum_{p = 1}^r \cos(\omega_p \tau)$.

Since the exact solution of the problem is not known, we define instead a reference solution $\rho^{\rm ref}_{\rm d}$ to analyze the quality of the micro-macro solution $\rho^{\rm approx}_{\rm d}$. 
We use the EEint scheme directly applied to the equation \eqref{eq:bloch_pop1} or \eqref{eq:bloch_pop_osc} with a reference discretization step $\Delta t_{\rm ref} = 5.10^{-6}$, which yields an approximation accurate to at least $10^{-5}$. 
The error we compute is 
\begin{equation*}
	E(\Delta t, \eps) = \max_{0 \leq \ell \leq L} \| \rho^{{\rm ref}, \ell}_{\rm d} - \rho^{{\rm approx}, \ell}_{\rm d} \|_{l^1},
\end{equation*}
where $L+1$ is the number of discretization points used to compute the approximate solution ($L$ chosen as a divisor of $L^{\rm ref}$). \\

We first consider  a monochromatic wave (case 1F). 
In Fig. \ref{Fig:bloch_standard_scheme}, we present errors obtained when we solve the stiff problem \eqref{eq:bloch_pop1} with the standard EE scheme.
 As expected, it does not yield suitable results with, for a given $\Delta t$, increasing errors for decreasing $\eps$. 
 In addition, there exists some values $\eps$ for which the error is of size $\bigO(1)$ for any $\Delta t$. 
 On the contrary, when we apply the same standard EE scheme to the micro-macro problem \eqref{eq:mima} associated to \eqref{eq:bloch_pop1}, we obtain uniform accurate results as illustrated in Fig. \ref{Fig:bloch_MM_scheme}. 
 The reader may notice that for large values of $\eps$, the error is slightly degraded, in the sense that the error constant (which multiplies $\Delta t^s$ in the error) is slightly larger for $\eps > 0.1$. 
 However, this threshold is independent of the time-step $\Delta t$ and remains uniformly bounded w.r.t. $\eps$, which does not contradict our result of uniform accuracy. 
 This is furthermore verified when plotting the error with respect with $\Delta t$, which shows straight lines of slope 1. 
 The lines are not on top of each other only for values of $\eps$ larger than $0.1$ (blue, orange, yellow and purple results), but even for these values the slope remains unchanged.

\begin{figure}[h!]
\begin{center}
\includegraphics[scale=0.18]{./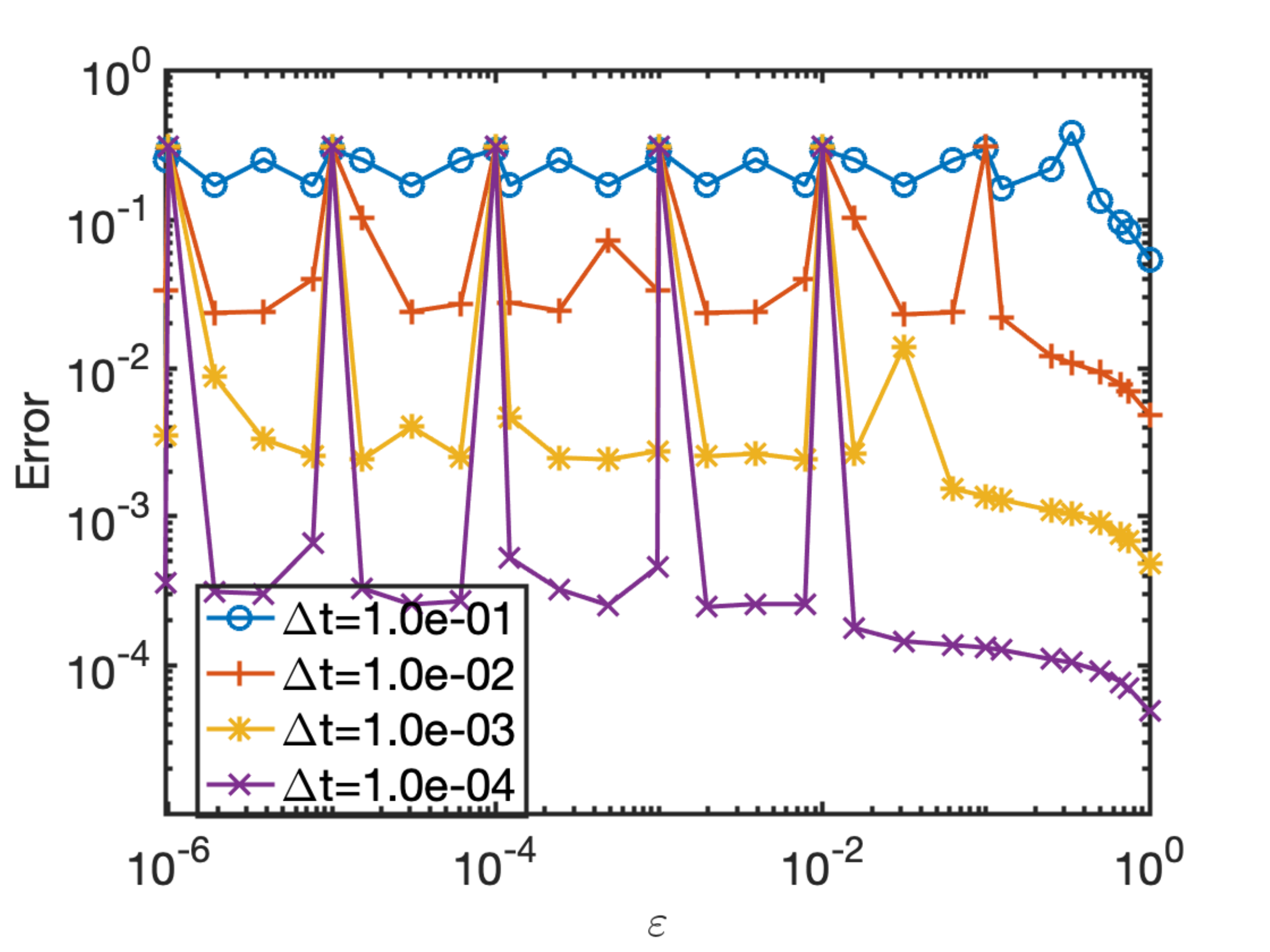}\hspace{0.5cm}
\includegraphics[scale=0.18]{./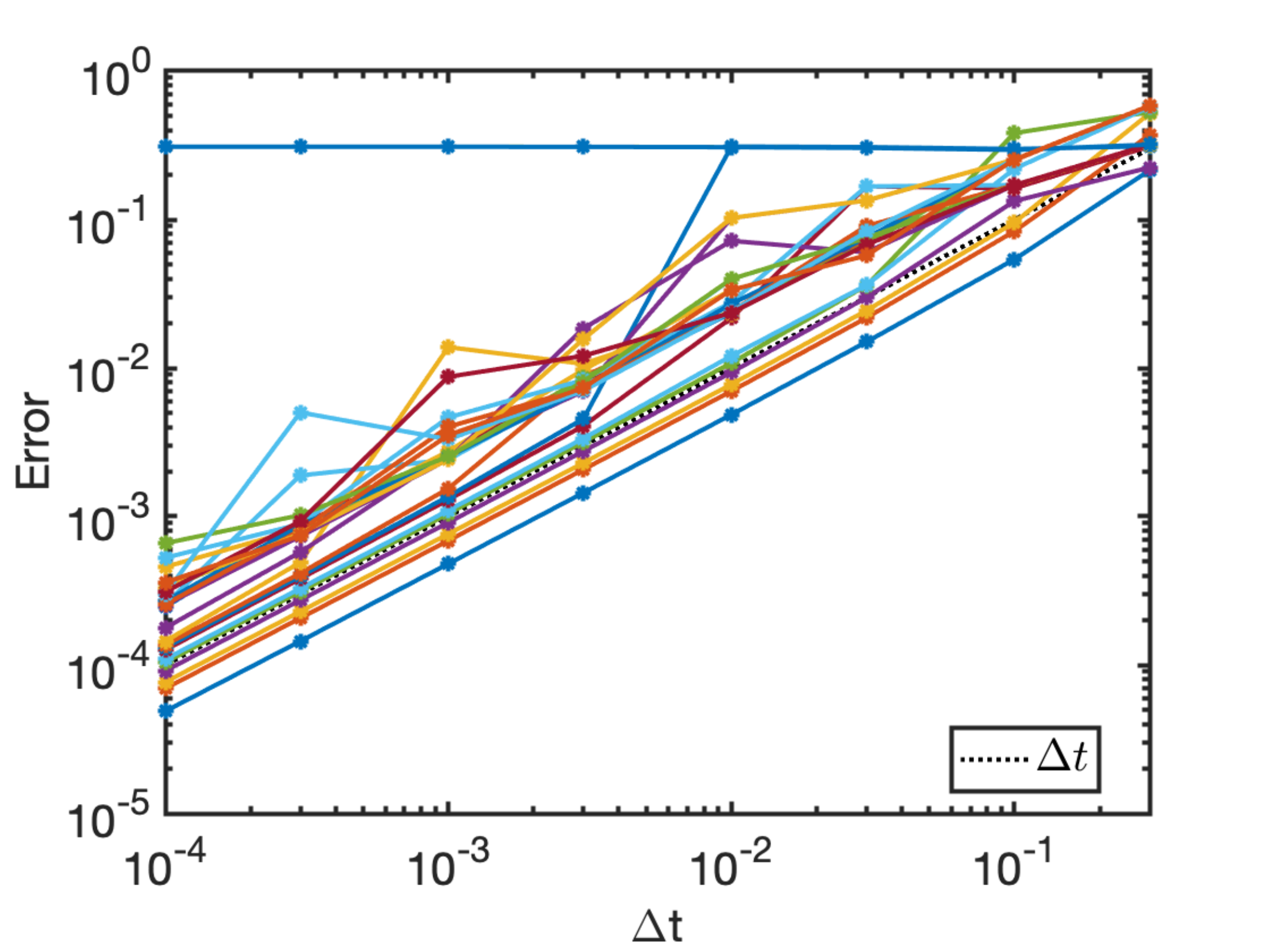}
\caption{Case 1F, solving \eqref{eq:bloch_pop1} using EE; error with respect to $\eps$ for various $\Delta t$ (left) and with respect to $\Delta t$ for various $\eps$ (right).}
\label{Fig:bloch_standard_scheme}
\end{center}
\end{figure}

\begin{figure}[h!]
\begin{center}
\includegraphics[scale=0.18]{./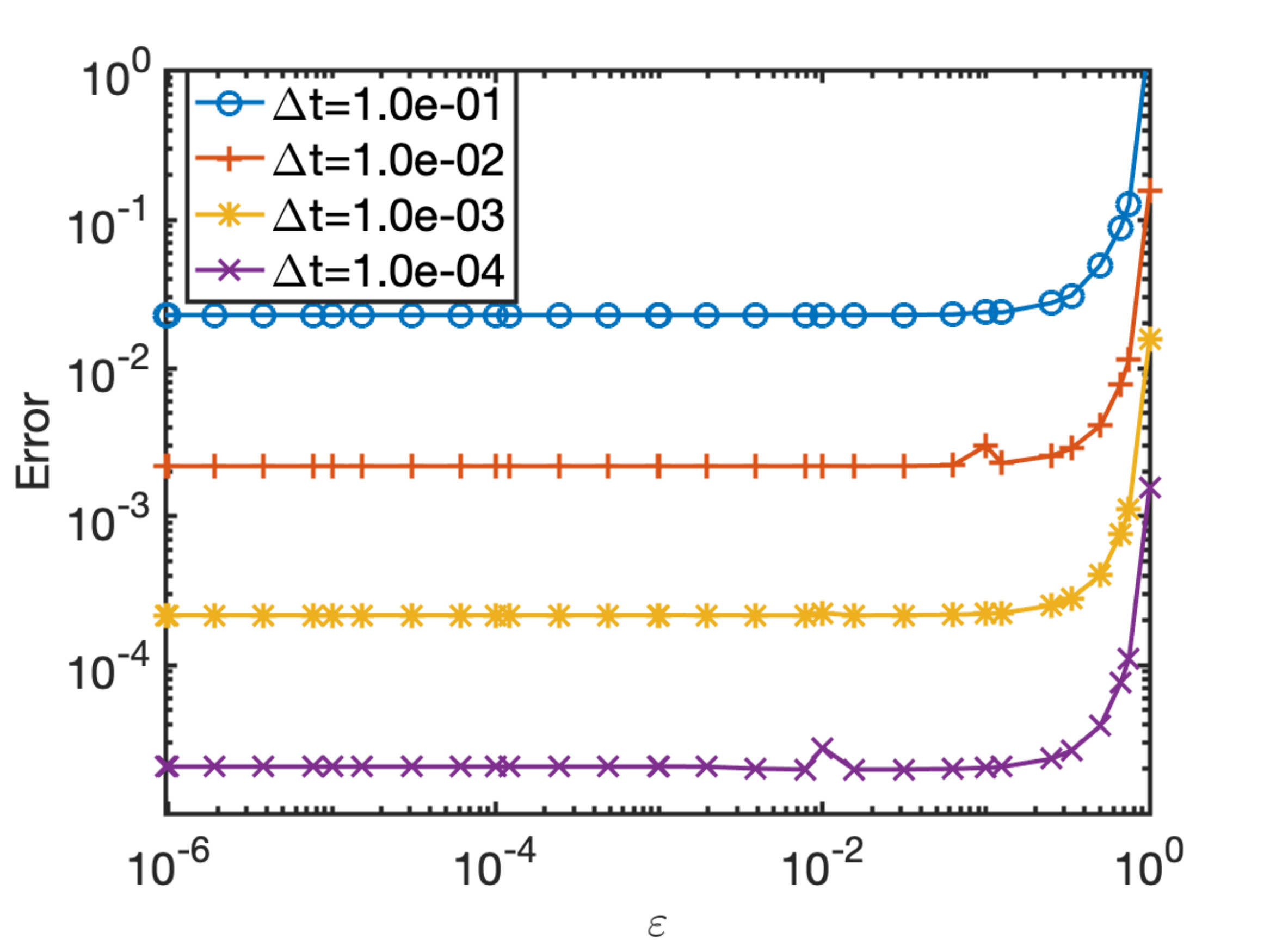}\hspace{0.5cm}
\includegraphics[scale=0.18]{./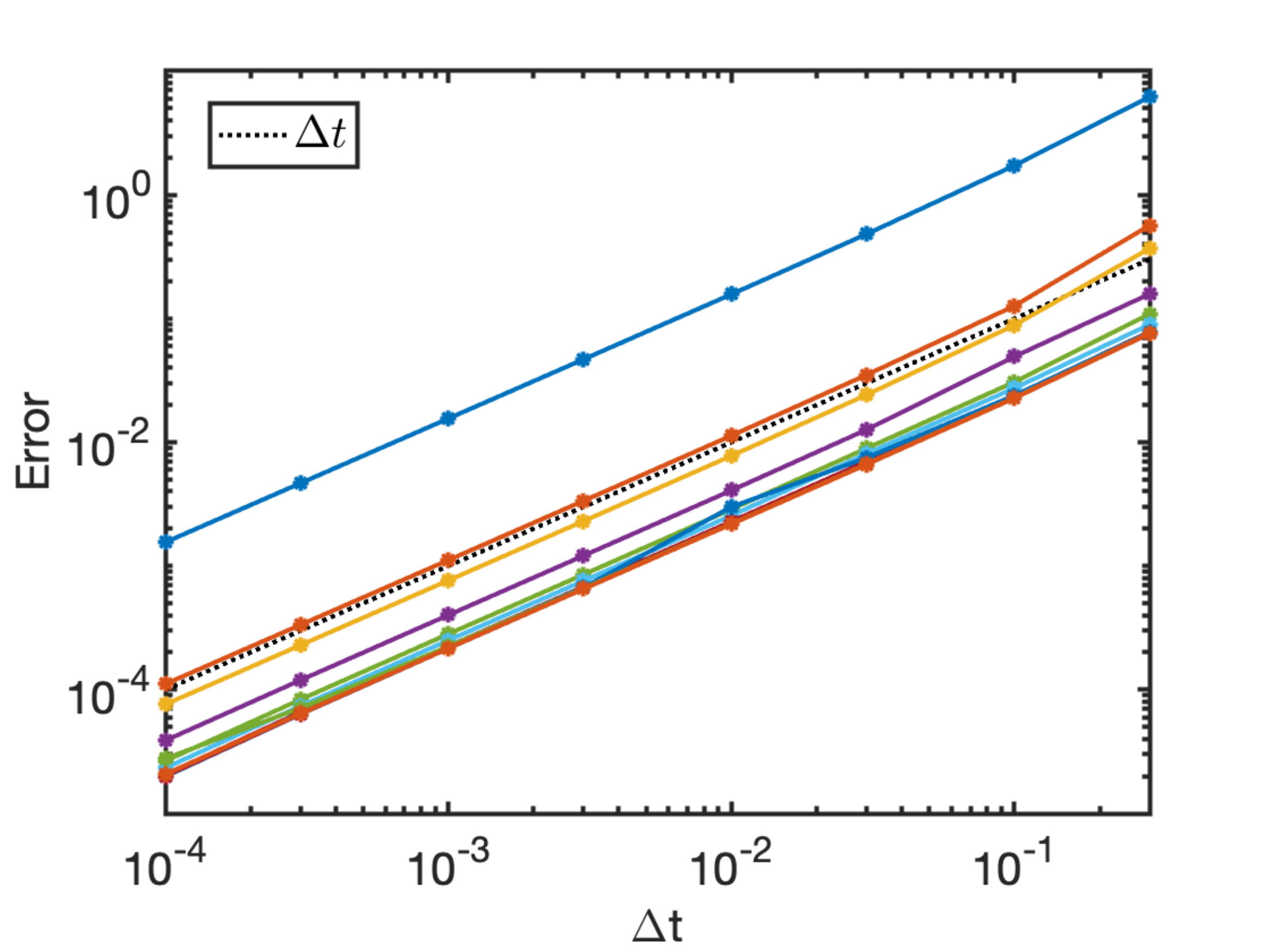}
\caption{Case 1F, solving \eqref{eq:mima} for $n = 1$ using EE; error with respect to $\eps$ for various $\Delta t$ (left) and with respect to $\Delta t$ for various $\eps$ (right).}
\label{Fig:bloch_MM_scheme}
\end{center}
\end{figure}

Then, we present in Fig. \ref{Fig:bloch_MM_RK2int_scheme} results obtained when the  micro-macro problem \eqref{eq:mima} of order 1 is solved using the RK2int scheme. 
As already discussed for the scalar test problem, it allows to increase the convergence error by one, giving uniform errors of size $\bigO(\Delta t^2)$. 
Notice that we are limited by the accuracy of the reference solution. 
This is why we do not consider discretization steps smaller than $3 \times 10^{-3}$ in this context, since even for this fairly large time step, results are slightly affected.

\begin{figure}[h!]
\begin{center}
\includegraphics[scale=0.18]{./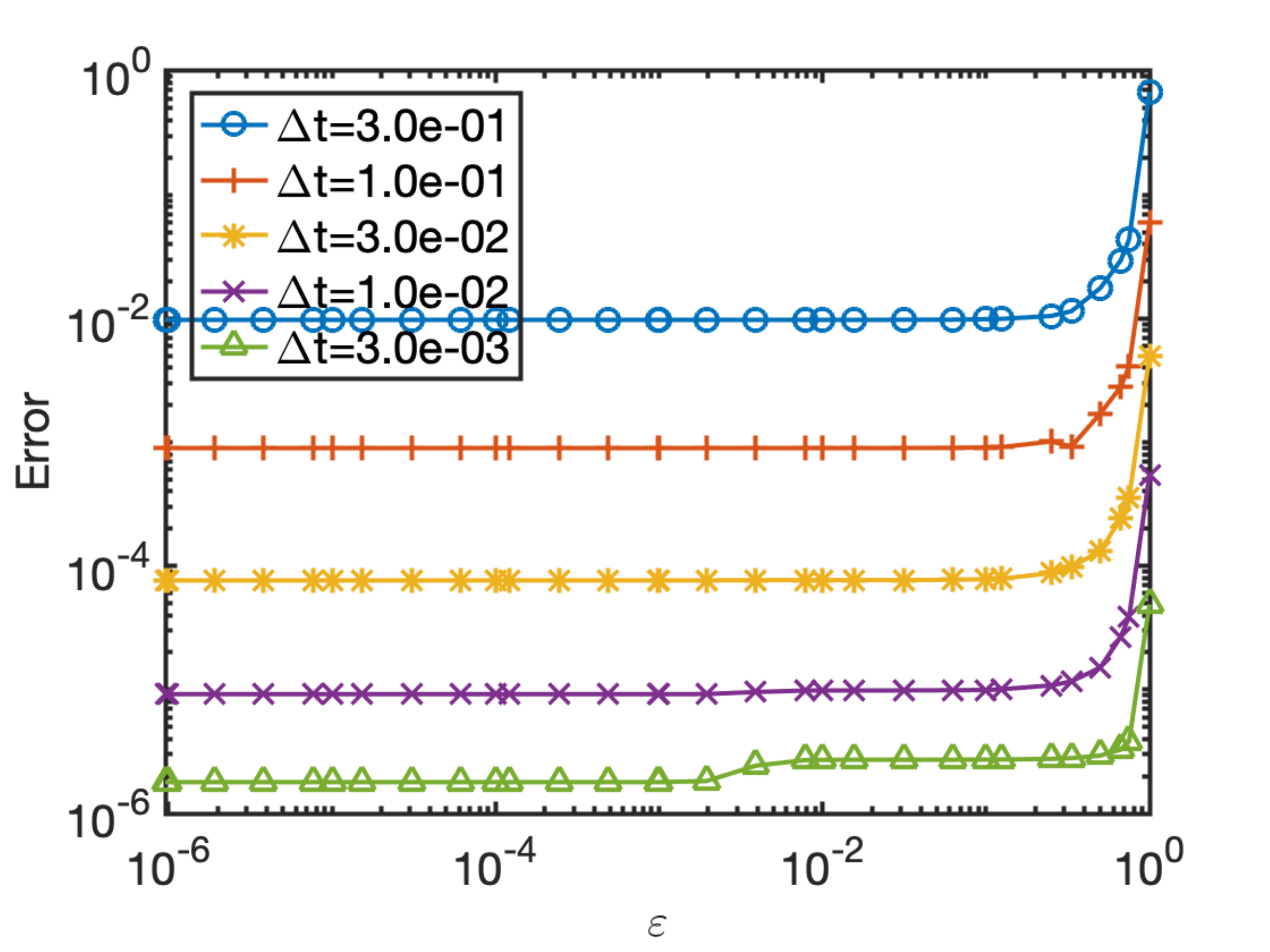}\hspace{0.5cm}
\includegraphics[scale=0.18]{./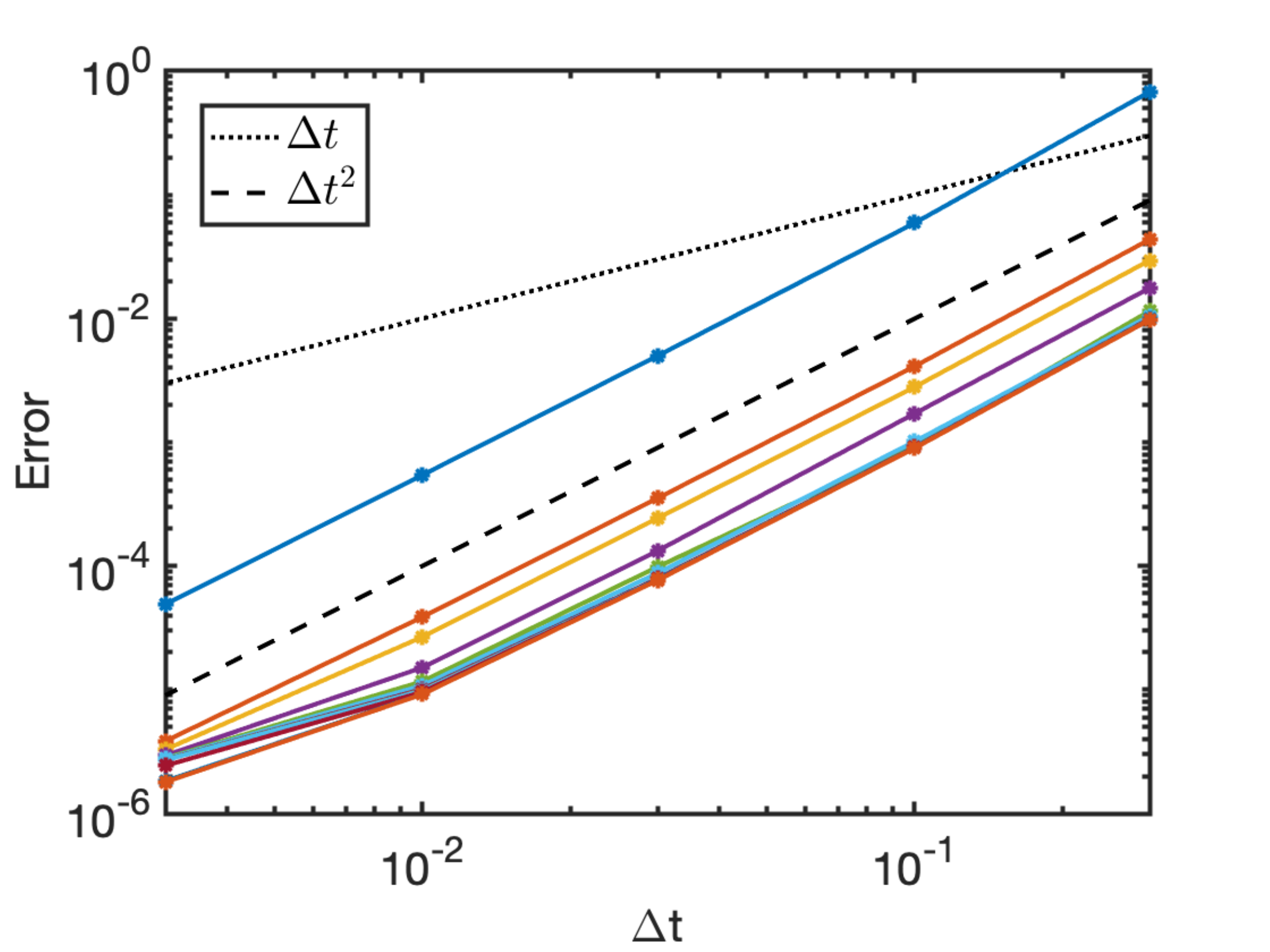}
\caption{Case 1F, solving \eqref{eq:mima} for $n=1$ using RK2int; error with respect to $\eps$ for various $\Delta t$ (left) and with respect to $\Delta t$ for various $\eps$ (right).}
\label{Fig:bloch_MM_RK2int_scheme}
\end{center}
\end{figure}

Finally, we consider a multi-chromatic wave (case 3F) for the decay-free problem~\eqref{eq:bloch_pop_osc}. 
Thanks to the in-depth study of the scalar test case, we know that the exponentially decreasing terms do not bring further numerical difficulties thus this test case remains representative of the problem \eqref{eq:bloch_pop1}. 
Results presented in Fig. \ref{Fig:bloch_MM_scheme_3F} are qualitatively similar to the previous ones. 
It confirms that our approach is suitable to describe the long time evolution of populations for a quantum system forced by a quasi-periodic electromagnetic wave.

\begin{figure}[h!]
\begin{center}
\includegraphics[scale=0.18]{./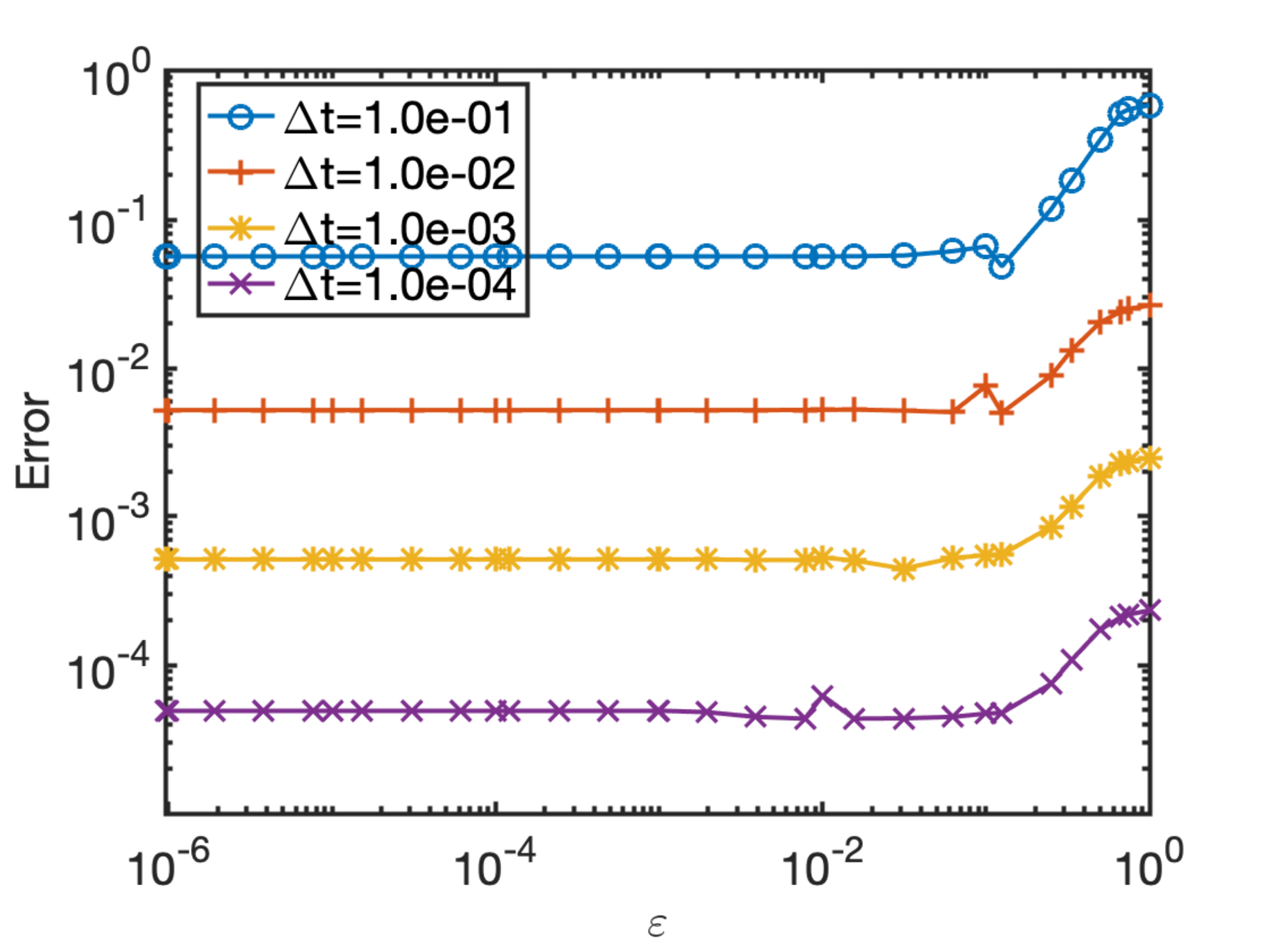}\hspace{0.5cm}
\includegraphics[scale=0.18]{./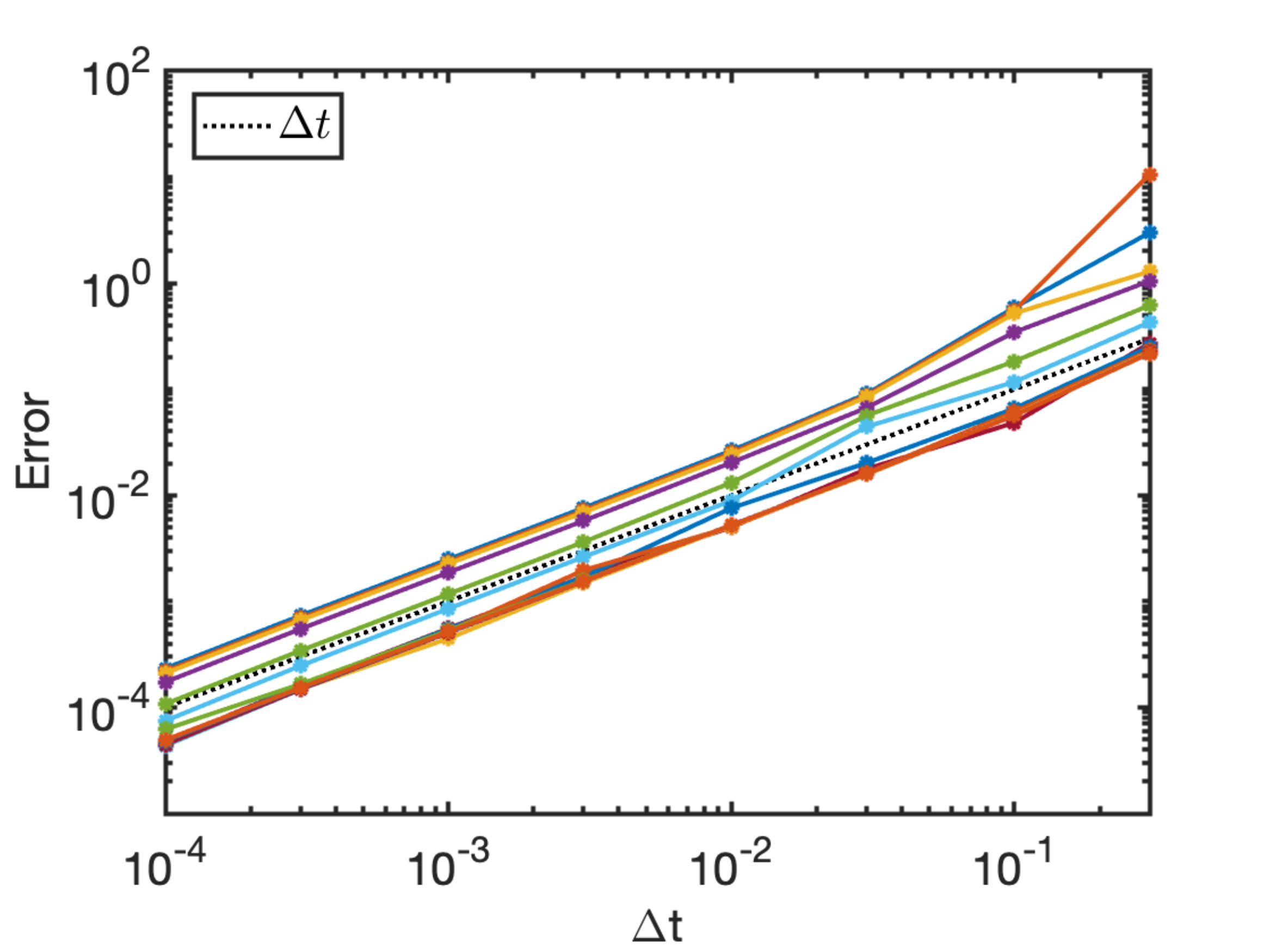}
\caption{Case 3F, solving \eqref{eq:mima} for $n = 1$ using EE; error with respect to $\eps$ for various $\Delta t$ (left) and with respect to $\Delta t$ for various $\eps$ (right).}
\label{Fig:bloch_MM_scheme_3F}
\end{center}
\end{figure}

\section{Conclusion}

This paper was concerned with a generic linear differential equation, with a time-dependent forcing which can be split in a quasi-periodic part and an exponentially decaying part. 
Adapting averaging techniques, we performed a micro-macro decomposition, which was proven to be well-posed. 
We then obtained suitable estimates on time-derivatives of the micro and macro variables meaning that the micro-macro problem can be solved with uniform accuracy using a standard scheme. 

Using a toy problem for which the exact solution is known, we illustrated the different components of this approach e.g. the size of the micro-part and of its derivatives, thereby validating the uniform accuracy results. 
Then, we successfully applied it to a transitional model derived from the Bloch model.

A continuation of this work is to propose an approach to solve numerically the original Bloch model \eqref{eq:bloch2} governing both populations and coherences. 
We believe the information learnt in this paper on the populations given by the transitional model can be enriched using predicted coherences in the population equation and then correcting them with an appropriated integral scheme.

\bibliographystyle{alpha}
\bibliography{biblio.bib}

\appendix

\section{Auxiliary proofs}

In this appendix, we present proofs of some technical results used in Sections~\ref{sec:results} and \ref{sec:proofs}.

\subsection{Sharp-flat decomposition}
\label{app:sec:sharp-flat-properties}

\begin{proof}[Proof of the direct sum defined in Definition \ref{def:direct_sum}]
	To prove that $\calE^\qp_\kappa$ and $\calE^\dmp$ are in direct sum, we show that any function $\varphi \in \calE^\qp_\kappa \cap \calE^\dmp$ is necessarily zero.
	Since $\varphi \in \calE^\qp_\kappa$ it can be represented as Fourier series and for all $\tau \geq 0$,
	\begin{equation*}
		\varphi_\tau = \varphi^\qp_{\omega\tau} = \sum_{\alpha \in \setZ^r} e^{i(\alpha \cdot \omega)\tau} \widehat\varphi_\alpha
		\text{ where } 
		\widehat\varphi_\alpha = \frac1{(2\pi)^r} \int_{\setT^r} e^{-i\alpha \cdot \theta} \varphi^\qp_\theta \D\theta.
	\end{equation*}
	Thanks to Arnold's theorem,
	\begin{equation*}
		\widehat\varphi_\alpha = \lim_{T \to \infty} \frac1T \int_0^T e^{-i\alpha \cdot (\omega\tau) } \varphi^\qp_{\omega\tau} \D\tau
		= \lim_{T \to \infty} \frac1T \int_0^T e^{-i(\alpha \cdot \omega)\tau)} \varphi_\tau \D\tau.
	\end{equation*}
	Since we also suppose that $\varphi \in \calE^\dmp$, $\| \varphi \| \leq e^{-\tau} \| \varphi \|_\Lexp$ and therefore
	\begin{equation*}
		\left| \frac1T \int_0^T e^{-i(\alpha \cdot \omega)\tau) } \varphi_\tau \D\tau \right| 
		\leq \frac1T \int_0^T e^{-\tau} \D\tau \| \varphi \|_\Lexp \leq \frac1T \| \varphi \|_\Lexp,
	\end{equation*}
	which tends to $0$ as $T$ tends to infinity. 
	Hence, $\widehat\varphi_\alpha = 0$ for all $\alpha$ and $\varphi = 0$.
\end{proof}

\begin{proof}[Proof of Proposition \ref{prop:algebra}]
	Let $\varphi$ and $\widetilde\varphi$ in $\calE_\kappa$. 
	They are uniquely decomposed as $\varphi_\tau = \varphi^\qp_{\omega\tau} + \varphi^\dmp_\tau$ and $\widetilde\varphi_\tau = \widetilde\varphi^\qp_{\omega\tau} + \widetilde\varphi^\dmp_\tau$. 
	So, the product $\psi = \varphi \widetilde\varphi$ verifies
	\begin{equation*}
		\psi_\tau = (\varphi \widetilde\varphi)_\tau 
		= \varphi^\qp_{\omega\tau} \widetilde\varphi^\qp_{\omega\tau} + \varphi^\qp_{\omega\tau} \widetilde\varphi^\dmp_\tau 
		+ \varphi^\dmp_\tau \widetilde\varphi^\qp_{\omega\tau} + \varphi^\dmp_\tau \widetilde\varphi^\dmp_\tau.
	\end{equation*}
	We introduce $\psi^\qp$ and $\psi^\dmp$ such that $\psi^\qp_{\omega \tau} = \varphi^\qp_{\omega\tau} \widetilde\varphi^\qp_{\omega\tau}$ and $\psi^\dmp_\tau = \varphi^\qp_{\omega\tau}\widetilde\varphi^\dmp_\tau + \varphi^\dmp_\tau \widetilde\varphi^\qp_{\omega\tau} + \varphi^\dmp_\tau \widetilde\varphi^\dmp_\tau$.	
	We first estimate
	\begin{align}
		| e^\tau \psi^\dmp_\tau |
		& \leq \| \varphi^\qp \| | e^\tau \widetilde\varphi^\dmp_\tau| + | e^\tau \varphi^\dmp_\tau | (\| \widetilde\varphi^\qp \| + \| \widetilde\varphi^\dmp \|) \nonumber \\
		& \label{eq:prodflat}
		\leq \| \varphi^\qp \| \| \widetilde\varphi^\dmp \|_\Lexp + \| \varphi^\dmp \|_\Lexp (\| \widetilde\varphi^\qp \| + \| \widetilde\varphi^\dmp \|) < +\infty.
	\end{align}
	Hence $\psi^\dmp \in \calE^\dmp$.
	Now we compute the Fourier series
	\begin{align}
		\sum_{\alpha \in \setZ^r} e^{i\alpha \cdot \theta} \widehat{\varphi \widetilde\varphi}_\alpha
		& = \sum_{\alpha \in \setZ^r} e^{i\alpha \cdot \theta} (\widehat\varphi \star \widehat{\widetilde\varphi})_\alpha 
		= \sum_{\alpha \in \setZ^r} e^{i\alpha \cdot \theta} \sum_{\beta \in \setZ^r} \widehat\varphi_\beta \widehat{\widetilde\varphi}_{\beta - \alpha} \nonumber \\
		& \label{eq:prodseries}
		= \sum_{\beta \in \setZ^r} e^{i\beta \cdot \theta} \widehat\varphi_\beta \sum_{\gamma \in \setZ^r} e^{i\gamma \cdot \theta} \widehat{\widetilde\varphi}_\gamma 
		= \varphi^\qp_\theta \widetilde\varphi^\qp_\theta.
	\end{align}
	By the uniqueness of the decomposition, we therefore have $\psi \in \calE$ and $\psi_\tau = \psi^\qp_{\omega \tau} + \psi^\dmp_\tau$. 
	Since we have only used the fact that $\varphi$ and $\widetilde\varphi$ are in $\calE$, $\calE$ is an algebra. 
	Let us now prove that $\calE_\kappa$ is an algebra. 
	To this aim we estimate $\calN_\kappa(\psi)$. 
	From equation \eqref{eq:prodseries}
	\begin{align*}
		\| \psi^\qp \|_\kappa = \| \varphi^\qp \widetilde\varphi^\qp \|_\kappa 
		& \leq \sum_{\gamma \in \setZ^r} e^{\kappa|\gamma|} \sum_{\alpha, \beta \in \setZ^r, \alpha + \beta = \gamma} \left| \hat\varphi_\alpha \widehat{\widetilde\varphi}_\beta \right| \\
		& \leq  \sum_{\alpha, \beta \in \setZ^r} e^{\kappa|\alpha|} e^{\kappa|\beta|} \left| \hat\varphi_\alpha \right| | \widehat{\widetilde\varphi}_\beta |
		= \| \varphi^\qp \|_\kappa \| \widetilde\varphi^\qp \|_\kappa.
	\end{align*}
	From \eqref{eq:prodflat}, we also immediately have
	\begin{equation*}
	 	\| \psi^\dmp \|_\Lexp
	 	\leq \| \varphi^\qp \|_\kappa \| \widetilde\varphi^\dmp \|_\Lexp + \| \varphi^\dmp \|_\Lexp (\| \widetilde\varphi^\qp \|_\kappa + \| \widetilde\varphi^\dmp \|_\Lexp).
	\end{equation*}
	Gathering the above estimates
	\begin{equation*}
		\calN_\kappa(\psi) \leq \calN_\kappa(\varphi) \calN_\kappa(\widetilde\varphi).
	\end{equation*}
	This in particular implies that $\calE_\kappa$ is an algebra.
\end{proof}

\subsection{Integration in sharp-flat spaces} 
\label{app:integration_lemma}

\begin{proof}[Proof of Lemma~\ref{lemma:integration}.]
	Thanks to the sharp-flat decomposition we can write
	\begin{equation*}
		\psi_\tau - \mean{\psi} = \sum_{\alpha \neq 0} e^{i(\alpha \cdot \omega)} \widehat\psi_\alpha + \psi^\dmp_\tau.
	\end{equation*}
	Integrating this
	\begin{equation*}
		\varphi_\tau = \mean{\varphi} + \sum_{\alpha \neq 0} \frac{e^{i(\alpha \cdot \omega)}}{i(\alpha \cdot \omega)} \widehat\psi_\alpha + \int_{+\infty}^\tau \psi^\dmp_\sigma \D\sigma,
	\end{equation*}
        where $\mean{\varphi}$ is the integration constant, considering that the sequel has zero mean. 
        This yields the sharp-flat decomposition for $\varphi$:
        	\begin{equation*}
		\varphi^\qp_{\omega\tau} = \mean{\varphi} + \sum_{\alpha \neq 0} e^{i(\alpha \cdot \omega)} \frac{\widehat\psi_\alpha}{i(\alpha \cdot \omega)} 
		\text{ and } \varphi^\dmp_\tau = \int_{+\infty}^\tau \psi^\dmp_\sigma \D\sigma.
	\end{equation*}
	We first estimate
	\begin{equation*}
		\| \varphi^\dmp \|_\Lexp 
		= \sup_{\tau \geq 0} e^\tau \left| \int_{+\infty}^\tau \psi^\dmp_\sigma \D\sigma \right|
		\leq \sup_{\tau \geq 0} \int_\tau^{+\infty} e^{(\tau - \sigma)} \| \psi^\dmp \|_\Lexp \D\sigma = \| \psi^\dmp \|_\Lexp.
	\end{equation*}
	Then 
	\begin{equation*}
		\| \varphi^\qp - \mean{\varphi} \|_{\kappa_-} 
		= \sum_{\alpha \neq 0} e^{\kappa_ - |\alpha|} \frac{| \widehat\psi_\alpha |}{\alpha \cdot \omega}
        		= \sum_{\alpha \neq 0} \frac{e^{-(\kappa_+ - \kappa_-)|\alpha|}}{\alpha \cdot \omega} e^{\kappa_ + |\alpha|} | \widehat\psi_\alpha |.
	\end{equation*}
	Using the Diophantine condition
	\begin{equation*}
		\| \varphi^\qp - \mean{\varphi} \|_{\kappa_-} \leq c_\nu(\kappa_-) \sum_{\alpha \neq 0} e^{\kappa_ + |\alpha|} | \widehat\psi_\alpha |
		=  c_\nu(\kappa_-) \| \psi^\qp - \mean{\psi} \|_{\kappa_+}, 
	\end{equation*}
	where we have defined
	\begin{equation*}
		c_\nu(\kappa) = \frac1{c_D} \sup_{x > 0} x^\nu e^{-\kappa x}
		= \begin{cases}
			\frac1{c_D} \left( \frac\nu{\kappa e} \right)^\nu & \text{ if } \nu \neq 0, \\
			\frac1{c_D} & \text{ if } \nu = 0.
		\end{cases} 
	\end{equation*}
	Gathering the above estimates and using the definition of $c_I$ \eqref{eq:cI}
	\begin{align*}
		\calN_{\kappa_-}(\varphi - \mean{\varphi}) & = \| \varphi - \mean{\varphi} \|_{\kappa_-} + \| \varphi^\dmp \|_\Lexp \\
		& \leq \max(1,c_\nu(\kappa_+ - \kappa_-)) \left( \| \psi^\qp - \mean{\psi} \|_{\kappa_+} + \| \psi^\dmp \|_\Lexp \right) \\
		& = c_I(\kappa_+ - \kappa_-) \calN_{\kappa_+}(\psi - \mean{\psi}).
	\end{align*}
\end{proof}

\subsection{Properties of the $\Lambda$ operator}
\label{app:bound_Lambda}

\begin{proof}[Proof of Lemma~\ref{lemma:bound_Lambda}.]
	By definition
	\begin{equation*}
		\Lambda\{ \varphi \}_\tau = a_\tau \varphi_\tau - \varphi_\tau \mean{a \varphi}.
	\end{equation*}
	The algebraic properties of Proposition~\ref{prop:algebra} and inequalities \eqref{eq:ineq_norm_average} ensure the direct bound
	\begin{equation*}
		\calN_\kappa(\Lambda\{ \varphi \}) 
		\leq \calN_\kappa(a) \calN_\kappa(\varphi) + \calN_\kappa(\varphi) | \mean{a\varphi} |
		\leq  \left( 1 + \calN_\kappa(\varphi) \right) \calN_\kappa(a) \calN_\kappa(\varphi).
	\end{equation*}
	Since $\calN_\kappa(\varphi - \id) \leq c$,
	\begin{equation*}
		\calN_\kappa(\varphi) \leq \calN_\kappa(\id) + \calN_\kappa(\varphi - \id) \leq 1 + c.
	\end{equation*}
	Gathering theses estimates, we obtain
	\begin{equation*}
		\calN_\kappa(\Lambda\{ \varphi \}) \leq \left(1 + (1+c) \right) M (1+c) = (2+c) (1+c) M.
	\end{equation*}
	Last 
	\begin{equation*}
		\Lambda\{ \varphi \} - \Lambda\{ \widetilde\varphi \}
		= a \left( \varphi - \widetilde\varphi \right) 
		- \varphi \Mean{a \left( \varphi - \widetilde\varphi \right)}
		- \left( \varphi - \widetilde\varphi \right) \mean{a \widetilde\varphi},
	\end{equation*}
	and this can be bounded as
	\begin{equation*}
		\calN_\kappa(\Lambda\{ \varphi \} - \Lambda\{ \widetilde\varphi \})
		\leq \left( 1 + (1+c) + (1+c) \right) M \calN_\kappa(\varphi - \widetilde\varphi) 
		= (3+2c) M \calN_\kappa(\varphi - \widetilde\varphi) . 
	\end{equation*}
	Setting $N_c = (2+c) (1+c)$ and $L_c = 3+2c$, we obtain the estimates of Lemma~\ref{lemma:bound_Lambda}.
\end{proof}

\begin{remark*}
	The same type of estimates are also valid with no closure assumption. 
	In this case, we find that $\mean{\varphi}^{-1}$ can be bounded from above by $1/(1-c)$, and the constants should be $N_c = 2(1+c)/(1-c)$ and $L_c = 4/(1-c)^2$.
\end{remark*}

\subsection{Properties of the derivatives of the $\Lambda$ operator}
\label{app:bound_der_Lambda}

\begin{proof}[Proof of Lemma~\ref{lemma:bound_der_Lambda}.]
	Using the Leibniz's product rule
	\begin{equation*}
		\partial_\tau^p (a_\tau \varphi_\tau) = \sum_{p' = 0}^p \binom{p}{p'} \partial_\tau^{p'} a_\tau \partial_\tau^{p-p'} \varphi_\tau,
	\end{equation*}
	we have
	\begin{equation*}
		\| \partial_\tau^p (a\varphi) \| 
		\leq \Big( \sup_{0 \leq p \leq q} \| \partial_\tau^p a \Big) \Big( \sup_{0 \leq p \leq q} \| \partial_\tau^p \varphi \| \Big) \sum_{p' = 0}^p \binom{p}{p'}  
		\leq 2^q C_a^{(q)} M \sup_{0 \leq p \leq q} \| \partial_\tau^p \varphi \|.
	\end{equation*}
	Recalling also that $|\mean{a \varphi}| \leq M (1+c)$, we obtain
	\begin{equation*}
		\sup_{0 \leq p \leq q} \| \partial_\tau^p (\Lambda\{ \varphi \}) \| 
		= \sup_{0 \leq p \leq q} \| \partial_\tau^p (a \varphi - \varphi \mean{a \varphi} \| 
		\leq (2^qC_a^{(q)} + 1+c) M \sup_{0 \leq p \leq q} \| \partial_\tau^p \varphi \|.
	\end{equation*}
	Moreover, assuming also that $\displaystyle \sup_{0 \leq p \leq q} \| \partial_\tau^p \varphi \| \leq c^{(q)}$ and $\displaystyle \sup_{0 \leq p \leq q} \| \partial_\tau^p \widetilde\varphi \|\leq c^{(q)}$, 
	\begin{align*}
		\sup_{0 \leq p \leq q} \| \partial_\tau^p (\Lambda\{ \varphi \} - \Lambda\{ \widetilde\varphi \}) \|
		& = \sup_{0 \leq p \leq q} \left\| \partial_\tau^p \left( a (\varphi - \widetilde\varphi) - \varphi \Mean{a (\varphi - \widetilde\varphi)} 
		- (\varphi - \widetilde\varphi) \mean{a \widetilde\varphi} \right) \right\| \\
		& \leq (2^q C_a^{(q)} + c^{(q)} + 1 + c) M \sup_{0 \leq p \leq q} \| \partial_\tau^p (\varphi - \widetilde\varphi) \|. 
	\end{align*}
	Setting $N^{(q)}_c = 2^qC_a^{(q)} + 1+c$ and $L^{(q)}_c = N^{(q)}_c + c^{(q)}$, we obtain the estimates of Lemma~\ref{lemma:bound_der_Lambda}.
\end{proof}

\section{Bloch computations}

In this appendix, we present some computations to clarify the description and the implementation of the rate equations derived from the Bloch model, considering that the quantum system is forced by a (quasi-)periodic wave.

\subsection{Explicit expressions of the transition rates} 
\label{appendix:psi}

We consider a $r$-chromatic wave of the form
\begin{equation*}
	V^{\qp}(\tau) = \frac{E_0}{r}\sum_{p = 1}^r \cos(\omega_p \tau).
\end{equation*}
Using the computation
\begin{equation*}
	\int_0^\tau e^{\Omega \sigma} \cos(\omega (\tau - \sigma)) \D\sigma
	= \frac{\Omega e^{\Omega \tau}}{\Omega^2 + \omega^2} 
	+ \frac\omega{\Omega^2 + \omega^2} \sin(\omega \tau) 
	- \frac\Omega{\Omega^2 + \omega^2} \cos(\omega \tau)
\end{equation*}
and introducing the coefficients
\begin{equation}
	\label{eq:appendix_SR}
	R(\tau, \omega, \Omega) = \Re \left( \frac{\omega e^{\Omega \tau}}{\omega^2 + \Omega^2} \right) 
	\text{ and }
	S(\tau, \omega, \Omega) = - \Re \left( \frac{\Omega e^{\Omega \tau}}{\omega^2 + \Omega^2} \right),
\end{equation}
we obtain expressions for $\Psi$ and $\Psi^\infty$ defined respectively in \eqref{eq:bloch_psi1} and \eqref{eq:bloch_psi_inf}. 
They  read
\begin{equation*}
	\Psi_\tau = \Psi^\qp_{\omega \tau} + \Psi^\dmp_\tau 
	\text{ and }
	\Psi^\infty_\tau = \Psi^\qp_{\omega \tau}
\end{equation*}
with
\begin{equation*}
	(\Psi^\qp_{\omega \tau})_{lj} 
	= \frac{2 E_0^2}{r^2} |p_{lj}|^2 \sum_{p_1 = 1}^r \sum_{p_2 = 1}^r \cos(\omega_{p_1} \tau) 
	\Big( \sin(\omega_{p_2} \tau) R(0, \omega_{p_2}, \Omega_{lj}) + \cos(\omega_{p_2} \tau)  S(0, \omega_{p_2}, \Omega_{lj}) \Big),
\end{equation*}
and
\begin{equation*}
	(\Psi^\dmp_\tau)_{lj}  
	= - \frac{2 E_0^2}{r^2} |p_{lj}|^2 \sum_{p_1 = 1}^r \sum_{p_2 = 1}^r \cos(\omega_{p_1} \tau)  S(\tau, \omega_{p_2}, \Omega_{lj}).
\end{equation*}
We remark that the term $e^{\Omega \tau}$ appears only in the expression of $\Psi^\dmp_\tau$ meaning that the frequencies of the term $\Psi^\qp_{\omega \tau}$ are those of the electromagnetic wave, the eigenfrequencies of the quantum system acting only on the amplitude coefficients. 
Then, the average transition rate defined in \eqref{eq:bloch_psi_average} may be cast as
\begin{equation*}
	\mean{\Psi}_{lj} = \frac{E_0^2}{r^2} |p_{lj}|^2 \sum_{p = 1}^r S(0, \omega_p, \Omega_{lj}).
\end{equation*}
Notice that 
\begin{equation*}
	S(0, \omega, \Omega_{lj})
	= \frac{\gamma_{lj}}2 \left( \frac1{\gamma_{lj}^2 + (\omega + E_l - E_j)^2} + \frac1{\gamma_{lj}^2 + (\omega - E_l + E_j)^2} \right).
\end{equation*}
This explicit expression emphasizes that some resonances can occur between the high-frequency oscillations of the electromagnetic wave (carried by $\omega$) and that of the quantum system (carried by the eigenfrequencies $E_l - E_j$).

\subsection{Off-line computations for the Bloch micro-macro problem}\label{appendix:off-line_computations}

For the simplicity of the presentation, we consider in this appendix a mono-chromatic wave $V^\qp(\tau) = E_0 \cos(\omega \tau)$ and compute  coefficients needed for the implementation of the micro-macro scheme of order 1 associated to equation \eqref{eq:bloch_pop_osc}. 
The treatment of the $r$-chromatic wave and the addition of the exponentially decreasing terms associated to equation \eqref{eq:bloch_pop1} only make more complex the expressions and do not bring further difficulties. 

For this restricted case, we simplify notations introduced in \eqref{eq:appendix_SR} denoting
\begin{equation*}
	R_{lj} = R(0, \omega, \Omega_{lj}) \text{ and } S_{lj} = S(0, \omega, \Omega_{lj}).
\end{equation*} 
The transition rate reads
\begin{equation*}
	(\Psi^\infty_\tau)_{lj} = (\Psi^\qp_{\omega \tau})_{lj}  
	= 2 E_0^2 |p_{lj}|^2 \Big( \cos(\omega \tau) \sin(\omega \tau) R_{lj} + \cos^2(\omega \tau) S_{lj} \Big).
\end{equation*}
and its average is given by
\begin{equation*}
	\mean{\Psi}_{lj} =  E_0^2 |p_{lj}|^2 S_{lj}.
\end{equation*}
Then, in order to obtain the near-identity map at first order $\Phi\rk1$, we need to compute the integral
\begin{equation*}
	(\Upsilon^\infty_\tau)_{lj}
	:= \int_0^\tau \Big( (\Psi^\infty_\sigma)_{lj} - \mean{\Psi}_{lj} \Big) \D\sigma 
	=  \frac{E_0^2 |p_{lj}|^2}\omega \Big( \sin^2(\omega \tau) R_{lj} + \sin(\omega \tau) \cos(\omega \tau) S_{lj} \Big)
\end{equation*}
as well as its average
\begin{equation*}
	\mean{\Upsilon^\infty}_{lj} = \frac{E_0^2 |p_{lj}|^2}{2\omega} R_{lj}.
\end{equation*}
Finally, in order to compute $A\rk1$ as well as $\Lambda\big\{ \Phi\rk1 \big\}$, we consider the product
\begin{align*}
	(\Psi^\infty_\tau)_{lj} (\Upsilon^\infty_\tau)_{ki}
	= \frac{2 E_0^4 |p_{lj}|^2 |p_{ki}|^2}\omega \Big( 
		& \cos(\omega \tau) \sin^3(\omega \tau) R_{lj} R_{ki} + \cos^3(\omega \tau) \sin(\omega \tau) S_{lj} S_{ki} \\
		& + \cos^2(\omega \tau) \sin^2(\omega \tau) \big( R_{lj} S_{ki} + S_{lj} R_{ki} \big) \Big).
\end{align*}
Since $\mean{\cos(\omega \tau) \sin^3(\omega \tau)} = \mean{\cos^3(\omega \tau) \sin(\omega \tau)} = 0$ and $\mean{\cos^2(\omega \tau) \sin^2(\omega \tau)} = \frac18$, the average of this product is
\begin{equation*}
	\mean{\Psi^\infty_{lj} \Upsilon^\infty_{ki}} = \frac{E_0^4 |p_{lj}|^2 |p_{ki}|^2}{4\omega} \big( R_{lj} S_{ki} + S_{lj} R_{ki} \big).
\end{equation*}

\end{document}